\numberwithin{equation}{section}
\newtheorem{theorem}{Theorem}[section]
\newtheorem{corollary}[theorem]{Corollary}
\newtheorem{proposition}[theorem]{Proposition}
\newtheorem{lemma}[theorem]{Lemma}
\newtheorem{conjecture}[theorem]{Conjecture}
\theoremstyle{definition}
\newtheorem{definition}[theorem]{Definition}
\newtheorem{example}[theorem]{Example}
\newtheorem{remark}[theorem]{Remark}
\newtheorem{question}[theorem]{Question}
    \theoremstyle{plain}
    \newtheoremstyle{TheoremNum}
        {\topsep}{\topsep}              
        {\itshape}                      
        {}                              
        {\bfseries}                     
        {.}                             
        { }                             
        {\thmname{#1}\thmnote{ \bfseries #3}}
    \theoremstyle{TheoremNum}
    \newtheorem{thmn}{Theorem}
    \newtheorem{propn}{Proposition}
    \newtheorem{corn}{Corollary}
\newcommand{\nc}{\newcommand}
\nc{\fr}{{\rightarrow}}
\nc{\rd}{red.deg}
\newcommand{\pr}{\mathbb P}
\newcommand{\rk}{\mbox{\upshape{rank}}}
\newcommand{\picard}{\mathrm{Pic}}
\newcommand{\cliff}{\mathrm{Cliff}}
\newcommand{\oo}{{\cal O}}
\newcommand{\cE}{{\cal E}}
\newcommand{\cF}{{\cal F}}
\def\longepi{\relbar\joinrel\twoheadrightarrow}
\newif\ifpdf
\newcommand{\grafica}{
        \ifpdf
        \DeclareGraphicsExtensions{.pdf, .jpg, .png}
        \else
        \DeclareGraphicsExtensions{.eps, .jpg}
        \fi}
\title{Linear series on curves: stability and Clifford index}
\author{E. C. Mistretta, L. Stoppino}
\begin{document}

\grafica

\maketitle

\begin{abstract}
We study  concepts of stabilities associated to a smooth complex curve together with a linear series on it.
In particular we investigate the relation between stability of the associated Dual Span Bundle and linear stability.
Our result implies a stability condition related to the Clifford index of the curve. 
Furthermore, in some of the cases, we prove that a stronger stability holds: cohomological stability.
Eventually using our results we obtain stable vector bundles of integral slope $3$, and  prove that they admit theta-divisors.
\end{abstract}

\section{Introduction}

Let $C$ be an irreducible projective smooth complex curve, and let ${\cal L}$ be a globally generated line bundle on $C$. 
Consider a generating subspace $V\subseteq H^0({\cal L})$.
The \emph{Dual Span Bundle} (DSB for short) $M_{V,{\cal L}}$ associated to this data is the kernel of the evaluation morphism:
$$
0\longrightarrow M_{V,{\cal L}}\longrightarrow V\otimes \oo_C\longrightarrow {\cal L}\longrightarrow 0.
$$
This is a vector bundle of rank $\dim V-1$ and degree $-\deg {\cal L}$. 
When $V = H^0({\cal L})$ we denote it $M_{\cal L}$.
This bundle is also called {\em transform} \cite{stabtrans}, or {\em Lazarsfeld bundle}  \cite{popa}.

In this paper we treat various kind of stability conditions, associated to these data, namely:
\begin{itemize}
\item[-] vector bundle stability, which we simply call stability, or slope stability, of $M_{V,{\cal L}}$ (Definition \ref{mustabilita});
\item[-] linear stability of the triple $(C,V,{\cal L})$ (Definition \ref{ls});
\item[-] cohomological stability of $M_{V,{\cal L}}$ (Definition \ref{cohostab}).
\end{itemize}

Stability of DSB's has been studied intensively and with many different purposes,
and it has been conjectured by Butler that it should  hold under generality conditions.
This conjecture has been verified in many cases,
and used to prove results on Brill-Noether theory and moduli spaces of coherent systems
(cf. \cite{Butler}, \cite{BP}, and \cite{BPN}).

These conditions satisfy the following implications
\begin{equation}\label{implicazioni}
cohomological \,\, stability \,\, \Rightarrow\,\, vector \,\, bundle \,\, stability \,\,\Rightarrow linear \,\, stability,
\end{equation}
which hold for semistability as well; moreover, as we are in characteristic  0m we have that cohomological semistability is equivalent to vector bundle semistability  \cite{EL}.

The purpose of this paper is twofold. Firstly we are interested in finding conditions under which the last implication in (\ref{implicazioni}) can be reversed, i.e. linear stability is a sufficient condition for the stability of the DSB.
The question of DSB's stability is considered by Butler in \cite{Butler}, and that work is the starting point of our investigation. It turns out that the Clifford index of the curve (definition in Section \ref{linear stability}) plays a central role.
In the last part of the paper we prove some counterexamples proving that the implication does not hold in general, and we state some conjectures. 

Secondly, we want to prove some new stability results. We establish some results, involving again the Clifford index. These results are achieved both using the arguments of the first part of the paper, and by different arguments for linear stability and cohomological stability.

\bigskip

Let us go deeper in the description of  our results.



As for the first question,
the convenience of reducing the stability of a DSB to the linear stability lies in the fact that linear stability is often less hard  to prove. 
Moreover, it has a clear geometric meaning in terms of relative degrees of projections of the given  morphism. 
So,  the question can be reformulated this way: to what extent the knowledge of geometry of a morphism is sufficient to detect the stability of the associated DSB?

Another  motivation for considering this problem comes from the work of the second author on fibred surfaces. To a fibred surface with a family of morphisms on the fibres, one can associate a certain divisorial class on the base curve. There are two methods that prove the positivity of this class, one assuming linear stability, the other assuming the stability of the DSB on a general fibre. The comparison between these methods leads naturally to comparing the two assumptions.


\medskip

Let us now assume that $C$ has genus $g\geq 2$. We obtain the following results. 

\begin{theorem}\label{main}
Let ${\cal L} \in \picard(C)$ be a globally generated line bundle,
and $V \subseteq H^0(C, \cal L)$ a generating space of global sections
such that 
\begin{equation}
\label{ahg}
\deg {{\cal L}} - 2 (\dim V  -1) \leqslant \textrm{Cliff} (C).
\end{equation}
Then linear (semi)stability of $(C, {\cal L}, V)$
is equivalent to (semi)stability 
of $M_{V,{\cal L}}$ in the following cases:

\begin{enumerate}

\item
\label{starthm} 
$ V = H^0({\cal L}) $ (complete case);

\item $\deg {\cal L}  \leqslant 2g - \textrm{Cliff} (C) +1$;

\item $\mathrm{codim}_{H^0({\cal L})} V <h^1({\cal L})+g/(\dim V-2)$;
 
 \item 
 \label{endthm}
 $\deg {\cal L} \geqslant 2g$, and $ \mathrm{codim}_{H^0({\cal L})} V \leqslant (\deg {\cal L} -2g)/2 $.

\end{enumerate}

\end{theorem}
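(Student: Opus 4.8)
The plan is to prove the two implications separately. The implication from (semi)stability of $M_{V,{\cal L}}$ to linear (semi)stability is already contained in (\ref{implicazioni}): for every subspace $W\subseteq V$ the saturated sub--line bundle ${\cal L}_W\subseteq{\cal L}$ it generates produces a subbundle $M_{W,{\cal L}_W}=M_{V,{\cal L}}\cap(W\otimes\oo_C)$, and the linear--stability inequality for $W$ is precisely the statement that this particular subbundle does not destabilize $M_{V,{\cal L}}$. Hence I would only need the reverse implication, and throughout I set $d=\deg{\cal L}$ and $r=\dim V-1=\rk M_{V,{\cal L}}$, so that $\mu(M_{V,{\cal L}})=-d/r$ and hypothesis (\ref{ahg}) reads $d-2r\leqslant\textrm{Cliff}(C)$.

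For the reverse implication I would argue by contradiction. Assuming linear (semi)stability, suppose $M_{V,{\cal L}}$ is not (semi)stable and let $F$ be its maximal destabilizing subbundle; it is saturated, semistable, and satisfies $\mu(F)\geqslant\mu(M_{V,{\cal L}})$, strictly in the stable case. Write $s=\rk F$ and $f=\deg F$, so the destabilizing condition is $rf\geqslant-sd$, i.e. $f\geqslant-sd/r$. The key device is a \emph{bridge} to linear--series data: the quotient $E:=(V\otimes\oo_C)/F$ is globally generated of rank $r+1-s$ and degree $-f\geqslant0$, it fits into $0\to M_{V,{\cal L}}/F\to E\to{\cal L}\to0$, and $H^0(F)=0$ gives $V\hookrightarrow H^0(E)$. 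Cutting $E$ by $\rk E-1$ general sections produces a globally generated quotient \emph{line} bundle $L'$ of degree $-f$ with $h^0(L')\geqslant s+1$; equivalently, minimality of the subspace $W\subseteq V$ with $F\subseteq W\otimes\oo_C$ exhibits $F$ inside the linear hull $M_{W,{\cal L}_W}$, and $L'$ records the failure of $F$ to fill this hull out.

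The Clifford index then controls the gap. If $L'$ is special and contributes to the Clifford index (that is, $h^0(L'),h^1(L')\geqslant2$), then $\deg L'-2(h^0(L')-1)\geqslant\textrm{Cliff}(C)$, which together with $h^0(L')-1\geqslant s$ and $\deg L'=-f$ yields $f\leqslant-2s-\textrm{Cliff}(C)$. Combining this with $f\geqslant-sd/r$ I would obtain the chain $-sd/r\leqslant f\leqslant-2s-\textrm{Cliff}(C)$, hence $d/r\geqslant2+\textrm{Cliff}(C)/s$. Since $s<r$, this is incompatible with (\ref{ahg}), which gives $d/r\leqslant2+\textrm{Cliff}(C)/r$, and the contradiction forces (semi)stability; in the boundary case $\textrm{Cliff}(C)=0$ all inequalities become equalities and only semistability survives, matching the statement. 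Thus (\ref{ahg}) is exactly the threshold at which a destabilizing subbundle with special Clifford--type quotient cannot exist, and in this regime no appeal to linear stability is even needed.

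The remaining work, and what I expect to be the main obstacle, is the analysis of the configurations in which $L'$ is \emph{non--special} or lies on the boundary ($h^1(L')\leqslant1$), where the Clifford index is silent. There Riemann--Roch replaces the Clifford bound and yields only $d/r\geqslant1+g/s$, which is compatible with (\ref{ahg}); to conclude one must genuinely use the linear (semi)stability hypothesis, comparing $L'$, equivalently the hull $M_{W,{\cal L}_W}$, against the linear--stability inequality, and this is where the four cases enter. Roughly, in the complete case (\ref{starthm}) completeness identifies the relevant section spaces, so that $h^1({\cal L})$ and the speciality of $L'$ are directly controlled; case (2) keeps $\deg{\cal L}$ small enough that the relevant quotient remains special; case (3) bounds $\mathrm{codim}_{H^0({\cal L})}V$ so that $V$ meets $H^0(L')$ in a space large enough to reactivate the Clifford estimate; and case (\ref{endthm}), in the non--special range $\deg{\cal L}\geqslant2g$, uses the codimension bound to pin the degree of the hull and close the estimate through linear (semi)stability. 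Verifying that in each of these four regimes the non--special and boundary configurations are eliminated, while matching strict versus non--strict inequalities to stability versus semistability, is the technical heart of the argument.
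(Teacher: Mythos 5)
Your easy direction and your general setup (destabilizing subbundle, pass through a quotient to a single line bundle via a determinant cut, then case analysis on $h^1$ of that line bundle) parallel the paper's skeleton, and your Clifford-contributing case actually closes cleanly: when $h^0(L'),h^1(L')\geqslant 2$, the chain $-sd/r\leqslant f\leqslant -2s-\mathrm{Cliff}(C)$ against $d\leqslant 2r+\mathrm{Cliff}(C)$ does rule out destabilization (and leaves only the hyperelliptic $d=2r$ equality case), by a route slightly more direct than the paper's own Lemma \ref{lem1}. But there is a genuine gap, and you have located it yourself: everything with $h^1(L')\leqslant 1$ is deferred, and the four numbered hypotheses of the theorem exist \emph{precisely} to handle that range. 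The paper's mechanism there has no counterpart in your sketch. First, one must replace your $E=(V\otimes\oo_C)/F$ by the quotient $F_{\cal S}=(W\otimes\oo_C)/{\cal S}$ built from the \emph{minimal} subspace $W$ (Remark \ref{conticini}): your $E$ has $h^0(E^*)\neq 0$ whenever $W\subsetneq V$, so it splits off trivial summands and cannot feed the key proposition. Second, under each numerical hypothesis one proves that the determinant sequence $0\to\oo_C^{\oplus\rk F_{\cal S}-1}\to F_{\cal S}\to{\cal A}\to 0$ is exact on global sections (the Claim in Theorem \ref{princthm}, Lemmas \ref{lem2} and \ref{lem3}, and, for case (iv), Lemma 2.2 of the reference \cite{stabtrans}). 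Third --- the decisive idea absent from your proposal --- Proposition \ref{splitglob} applies Green's Castelnuovo-type theorem to the multiplication map $H^0({\cal A})\otimes H^0(\omega_C)\to H^0({\cal A}\otimes\omega_C)$ to extract the \emph{gonality} bound $\deg{\cal A}\geqslant\gamma(h^0({\cal A})-1)$; since (\ref{ahg}) together with $\mathrm{Cliff}(C)\leqslant\gamma-2$ gives $\deg{\cal L}\leqslant\gamma(\dim V-1)$, Lemma \ref{ohi} then excludes the destabilization. As you correctly observe, the Riemann--Roch fallback $d/r\geqslant 1+g/s$ cannot conclude for $s$ close to $r$, and your per-case remedies do not work as stated: for instance, in case (ii) the bound $\deg{\cal L}\leqslant 2g-\mathrm{Cliff}(C)+1$ does \emph{not} force $L'$ to be special (degree at most $2g-2$ does not imply $h^1\geqslant 1$); what that hypothesis actually buys in Lemma \ref{lem2} is the numerical claim $\deg F_{\cal S}<\rk{\cal S}+g$ needed for exactness on global sections.

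A second, related misreading: you expect linear (semi)stability to be ``genuinely used'' throughout the hard cases, but in the paper it enters exactly once, to exclude $\rk F_{\cal S}=1$, i.e.\ destabilization by the DSB of a sub-linear series (Remark \ref{ossLS}); the higher-rank non-special configurations are then killed by the Green--gonality mechanism plus numerics, with no further appeal to the hypothesis. Your setup with the full quotient $E$ (which always has rank at least $2$) never isolates this rank-one configuration, yet it is the only place the hypothesis can act: when $F=M_{W,{\cal L}'}$ with $h^1({\cal L}')\leqslant 1$, your chain has no Clifford bound, and only the linear-stability inequality $\deg{\cal L}'/(\dim W-1)>d/r$ forbids the destabilization. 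So, as written, your argument either makes no use of linear stability at all --- which cannot suffice, since (semi)stability of $M_{V,{\cal L}}$ always implies linear (semi)stability, and for non-complete $V$ the latter is a nontrivial hypothesis --- or defers its use to the very cases you leave open. To repair the proposal you need both the reduction to $W$ and $F_{\cal S}$, and the gonality bound from Green's theorem (or a genuine substitute for it).
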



The theorem is proved by applying a Castelnuovo type result, relating evaluation of sections of a line bundle $\mathcal{A}$ tensorized with the canonical bundle and the image of the morphism induced by global sections of $\mathcal{A}$, to an exact sequence obtained from a possible destabilization of the bundle  $M_{V,{\cal L}}$.
The geometrical idea for this construction is simple and is carried out in section \ref{cliff},
but the computations in order to make this argument work are quite long
(sections \ref{complete} and \ref{non-complete}), and give rise to the bounds imposed in points \ref{starthm} to \ref{endthm} of the main theorem.

Let us now describe the stability results that we obtain.

By standard linear series methods, we can prove in Section \ref{linear stability} the following dependance of linear stability on the Clifford index of the curve.

\begin{propn}[\ref{linear}]
Let $C$ be a curve of genus $g\geq 2$.
Let ${\cal L} \in \picard(C)$ be a globally generated line bundle
such that $\deg {\cal L} - 2 (h^0({\cal L}) -1) \leqslant \textrm{Cliff} (C)$.
Then ${\cal L}$ is linearly semistable. 
It is linearly stable unless 
${\cal L} \cong \omega_C(D)$ with $D$ an effective divisor of degree 2,
or
$C$ is hyperelliptic and  $\deg {\cal L} = 2 (h^0({\cal L}) -1)$.
\end{propn}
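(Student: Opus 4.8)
The plan is to test the defining inequality of linear (semi)stability against an arbitrary proper subspace and to reduce everything to a single line bundle attached to that subspace. Write $d=\deg\mathcal{L}$ and $n=h^0(\mathcal{L})-1$, so the hypothesis reads $d-2n\le\mathrm{Cliff}(C)$. For a subspace $W\subseteq H^0(\mathcal{L})$ with $\dim W=m+1$ and $1\le m\le n-1$, let $D_W$ be its base divisor and $\mathcal{L}_W=\mathcal{L}(-D_W)$ its moving part, of degree $d_W$; linear semistability means $d_W/m\ge d/n$ for every such $W$, and linear stability means strict inequality throughout. Since $W\subseteq H^0(\mathcal{L}_W)$, we always have $h^0(\mathcal{L}_W)\ge m+1$, and this single inequality, fed into Clifford's theorem and Riemann--Roch, is what drives the whole argument. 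I also record the identity $d-n=g-h^1(\mathcal{L})$, so that $d/n=1+(g-h^1(\mathcal{L}))/n$.

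First I would split according to $h^1(\mathcal{L}_W)$. If $h^1(\mathcal{L}_W)\ge 2$, then $\mathcal{L}_W$ contributes to the Clifford index, so $d_W-2(h^0(\mathcal{L}_W)-1)\ge\mathrm{Cliff}(C)$; with $h^0(\mathcal{L}_W)-1\ge m$ this yields $d_W\ge 2m+\mathrm{Cliff}(C)$, hence $d_W/m\ge 2+\mathrm{Cliff}(C)/m\ge 2+\mathrm{Cliff}(C)/n\ge 2+(d-2n)/n=d/n$, using $m<n$ and $\mathrm{Cliff}(C)\ge 0$. If $h^1(\mathcal{L}_W)=0$, Riemann--Roch gives $d_W\ge m+g$, so $d_W/m\ge 1+g/m>1+(g-h^1(\mathcal{L}))/n=d/n$ strictly, since $m<n$ and $g>0$. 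The delicate boundary is $h^1(\mathcal{L}_W)=1$: here $\mathcal{L}_W=\omega_C(-E)$ for an effective $E$, Riemann--Roch gives $d_W\ge m+g-1$, and the inclusion $\mathcal{L}_W\hookrightarrow\omega_C$ forces $d_W\le 2g-2$, hence $m\le g-1$. The required inequality reduces to $(g-1)/m\ge(g-h^1(\mathcal{L}))/n$, which I would establish by splitting on $n\le g$ (using $m\le n-1$ together with $(g-1)/(n-1)\ge g/n\iff g\ge n$) and on $n>g$ (using $m\le g-1$, so $(g-1)/m\ge 1>g/n$).

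The crux, and the step I expect to fight hardest, is the equality analysis, since this is what isolates the two exceptional families. In the first case equality forces $\mathrm{Cliff}(C)=0$, hence $C$ hyperelliptic, together with $d=2n$, which is the second listed family. In the boundary case $h^1(\mathcal{L}_W)=1$, equality in the chain above forces $h^1(\mathcal{L})=0$, $n=g$, $m=n-1$, and $E=0$, so that $\mathcal{L}_W=\omega_C$ and $\mathcal{L}\cong\omega_C(D_W)$ with $\deg D_W=d-(2g-2)=2$ -- precisely the first listed family -- while the case $h^1(\mathcal{L}_W)=0$ never produces equality. This already proves semistability in all cases, and proves stability whenever $\mathcal{L}$ avoids both families.

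It remains to check that stability genuinely fails in the two exceptional families, i.e. to exhibit destabilizing subspaces. For $\mathcal{L}\cong\omega_C(D)$ with $\deg D=2$, the subspace $W=H^0(\omega_C)$ is proper of dimension $g=n$, and since $\omega_C$ is base-point-free its moving part is $\mathcal{L}_W=\omega_C$, giving $d_W/m=(2g-2)/(g-1)=2=d/n$. For $C$ hyperelliptic with $d=2n$, I would use that a globally generated such $\mathcal{L}$ is either composed with the hyperelliptic pencil $\mathcal{A}$, so $\mathcal{L}\cong\mathcal{A}^{\otimes n}$, or else is of the form $\omega_C(D)$ with $\deg D=2$ and hence already covered; in the first case, multiplying $H^0(\mathcal{A}^{\otimes m})$ by a section cutting a divisor in $|\mathcal{A}^{\otimes(n-m)}|$ produces a subspace $W$ with moving part $\mathcal{A}^{\otimes m}$ and $d_W/m=2m/m=2=d/n$. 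I expect this hyperelliptic bookkeeping -- separating the composed and non-composed cases and invoking base-point-freeness of $\omega_C$ -- to be the only genuinely fussy point left; everything else is dictated by Riemann--Roch and the definition of the Clifford index.
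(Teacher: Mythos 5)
Your main argument is correct and follows essentially the same route as the paper's proof: a trichotomy on $h^1$ of the line bundle attached to the subseries, with the Clifford-index bound handling $h^1\geqslant 2$, Riemann--Roch handling $h^1=0$, and the boundary case $h^1=1$ isolating $\mathcal{L}\cong\omega_C(D)$. Your organizing the cases by $h^1(\mathcal{L}_W)$ rather than first by $h^1(\mathcal{L})$, and your use of $h^0(\mathcal{L}_W)\geqslant m+1$ in place of the paper's reduction to complete subseries, are only cosmetic differences; your equality analysis correctly pins down exactly the two exceptional families.

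The one genuine error sits in your last paragraph, in a step the statement does not actually require. The hyperelliptic dichotomy is false: if $h^1(\mathcal{L})=0$ and $d=2n$, then $d=2g$, and \emph{every} line bundle of degree $2g$ is globally generated with $h^0=g+1$; for $g\geqslant 3$ a generic $\mathcal{L}\in\picard^{2g}(C)$ on a hyperelliptic curve is neither a power of the $g^1_2$ nor of the form $\omega_C(D)$ (the locus $\omega_C+C_2$ has dimension $2$ inside the $g$-dimensional $\picard^{2g}(C)$, and $\mathcal{A}^{\otimes g}$ is a single point). Your Clifford-equality argument gives the dichotomy only when $h^1(\mathcal{L})\geqslant 1$. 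The destabilization you want exists anyway, and more cheaply: $\mathcal{L}\otimes\mathcal{A}^{-1}$ has degree $2g-2\geqslant g$, hence equals $\oo_C(E)$ with $E$ effective, and $s_E\cdot H^0(\mathcal{A})\subset H^0(\mathcal{L})$ is a pencil with base divisor $E$ and moving part $\mathcal{A}$, of ratio $2/1=d/n$. Since the proposition only asserts the implication ``not exceptional $\Rightarrow$ linearly stable'' --- which your first three paragraphs establish in full, and which is all the paper's proof establishes as well (it only remarks parenthetically that the $g^1_2$ destabilizes in the hyperelliptic case) --- this slip does not affect the correctness of your proof of the stated result.
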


Using this result, and Theorem \ref{main},
we obtain stability of DSB in the following cases.

\begin{thmn}[\ref{paranjape}]
Let ${\cal L} \in \picard(C)$ be a globally generated line bundle
such that 
\begin{equation}
\deg {{\cal L}} - 2 (h^0({\cal L}) -1 )  \leqslant \textrm{Cliff} (C).
\end{equation}
Then $M_{\cal L}$ is semistable, and it is strictly semistable only in one of the following cases

(i)
${\cal L} \cong \omega_C (D)$ with $D$ an effective divisor of degree 2,

(ii)
$C$ is hyperelliptic and $\deg {{\cal L}}= 2 (h^0({\cal L}) -1)$.
\end{thmn}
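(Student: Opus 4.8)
The plan is to obtain this statement as the complete-series specialization of the two results just recorded, namely Proposition \ref{linear} and the complete case of Theorem \ref{main}. Taking $V = H^0({\cal L})$ we have $M_{V,{\cal L}} = M_{\cal L}$, and since then $\dim V - 1 = h^0({\cal L}) - 1$, the hypothesis imposed here coincides exactly with the Clifford-index bound (\ref{ahg}) required by Theorem \ref{main}. Thus both cited results apply, under our assumption, to the triple $(C, {\cal L}, H^0({\cal L}))$, where linear (semi)stability of this triple is by convention the linear (semi)stability of the complete linear series on ${\cal L}$.

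First I would apply Proposition \ref{linear}: under the stated bound the complete series on ${\cal L}$ is always linearly semistable, and it is linearly stable except precisely when (i) ${\cal L} \cong \omega_C(D)$ with $D$ effective of degree $2$, or (ii) $C$ is hyperelliptic and $\deg {\cal L} = 2(h^0({\cal L})-1)$.

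Next I would transport these statements across the equivalence of case (\ref{starthm}) of Theorem \ref{main} (the complete case $V = H^0({\cal L})$). Since that equivalence holds simultaneously for stability and for semistability, linear semistability of the complete series yields semistability of $M_{\cal L}$ in every case covered by the hypothesis, while linear stability yields stability of $M_{\cal L}$ whenever we are outside (i) and (ii). In the two exceptional cases, the failure of strict linear stability, together with the linear semistability already noted, forces $M_{\cal L}$ to be semistable but not stable, i.e. strictly semistable. This gives the full characterization: $M_{\cal L}$ is semistable always and strictly semistable exactly in cases (i) and (ii).

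The argument is therefore a formal assembly of the two quoted results and introduces no new difficulty of its own; the genuine work sits in their proofs, namely the linear-series computation behind Proposition \ref{linear} and the destabilization-versus-projection comparison behind Theorem \ref{main}. The single point that must be handled with care is that I need the \emph{full} equivalence of the complete case of Theorem \ref{main}, not merely the a priori implication (\ref{implicazioni}) that (semi)stability implies linear (semi)stability. It is exactly the converse implication that upgrades linear stability of the series to stability of the bundle, and that makes the exceptional loci of Proposition \ref{linear} match the strictly-semistable locus claimed here on the nose rather than merely contain it.
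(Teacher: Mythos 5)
Your proposal is correct and is precisely the paper's argument: the authors also deduce Theorem \ref{paranjape} immediately by combining Proposition \ref{linear} with Theorem \ref{princthm} (the complete case of Theorem \ref{main}), and your emphasis on needing the full equivalence rather than the easy implication in (\ref{implicazioni}) is exactly the point of Theorem \ref{princthm}. The only slight overreach is your claim that $M_{\cal L}$ is strictly semistable \emph{exactly} in cases (i) and (ii), whereas the statement only requires the implication that strict semistability forces (i) or (ii), which is all the cited results directly give.
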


In particular, this implies the following results.

\begin{corn}[\ref{spiego1}]
Let ${\cal L}$ be a  globally generated line bundle over $C$ with 
$$\deg {{\cal L}} \geqslant 2g - \cliff (C).$$
Then the vector bundle $M_{\cal L}$ is semistable. 
It is stable unless $(i)$ or $(ii)$ hold.
\end{corn}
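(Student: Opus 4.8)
The plan is to deduce this corollary directly from Theorem \ref{paranjape}, by showing that the single degree bound $\deg {\cal L} \geqslant 2g - \cliff(C)$ already forces the Clifford-index inequality $\deg {\cal L} - 2(h^0({\cal L}) - 1) \leqslant \cliff(C)$ that appears as the hypothesis of that theorem. Once this reduction is in place, semistability of $M_{\cal L}$, together with stability outside cases $(i)$ and $(ii)$, is immediate.

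The content of the reduction is a short Riemann--Roch bookkeeping. By Riemann--Roch on the genus-$g$ curve $C$ one has $h^0({\cal L}) - h^1({\cal L}) = \deg {\cal L} - g + 1$, hence $h^0({\cal L}) - 1 = \deg {\cal L} - g + h^1({\cal L})$. Substituting this into the left-hand side of the Clifford-index condition gives
$$
\deg {\cal L} - 2(h^0({\cal L}) - 1) = 2g - \deg {\cal L} - 2\,h^1({\cal L}).
$$
Since $h^1({\cal L}) \geqslant 0$, the right-hand side is at most $2g - \deg {\cal L}$, and the hypothesis $\deg {\cal L} \geqslant 2g - \cliff(C)$ yields $2g - \deg {\cal L} \leqslant \cliff(C)$. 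Combining these two estimates gives $\deg {\cal L} - 2(h^0({\cal L}) - 1) \leqslant \cliff(C)$, which is precisely the numerical assumption of Theorem \ref{paranjape}.

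With that inequality established, Theorem \ref{paranjape} applies verbatim to ${\cal L}$: it gives that $M_{\cal L}$ is semistable, and that it is strictly semistable only in cases $(i)$ or $(ii)$; equivalently, $M_{\cal L}$ is stable whenever neither $(i)$ nor $(ii)$ holds. I do not expect any genuine obstacle here, as all the substantive work is contained in Theorem \ref{paranjape} (and, through it, in Proposition \ref{linear} and Theorem \ref{main}). The only point worth flagging is that the reduction simply discards the nonnegative term $2\,h^1({\cal L})$, so the degree-only hypothesis of the corollary is a priori slightly stronger than the cohomological hypothesis of Theorem \ref{paranjape}; for line bundles of large degree, where $h^1({\cal L}) = 0$, the two hypotheses coincide.
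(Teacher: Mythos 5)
Your proposal is correct and coincides with the paper's own proof: both reduce to Theorem \ref{paranjape} by the same Riemann--Roch computation, using $h^0({\cal L})-1 = \deg {\cal L} - g + h^1({\cal L})$ and discarding the nonnegative term $h^1({\cal L})$ to convert the degree bound $\deg {\cal L} \geqslant 2g - \cliff(C)$ into the Clifford-index hypothesis $\deg {\cal L} - 2(h^0({\cal L})-1) \leqslant \cliff(C)$. Your closing observation about the two hypotheses coinciding when $h^1({\cal L})=0$ is accurate but not needed for the argument.
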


\begin{corn}[\ref{spiego2}]
Let ${\cal L}$ be any line bundle that computes the Clifford index of $C$. Then $M_{\cal L}$ is semistable; it is stable unless $C$ is hyperelliptic.
\end{corn}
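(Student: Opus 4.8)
The plan is to reduce everything to Theorem \ref{paranjape}: once a line bundle computing the Clifford index is known to satisfy its hypotheses, semistability is immediate, and the corollary reduces to ruling out the two exceptional cases $(i)$ and $(ii)$. Indeed, by definition a line bundle ${\cal L}$ computing the Clifford index has $h^0({\cal L})\geq 2$, $h^1({\cal L})\geq 2$, and $\deg{\cal L}-2(h^0({\cal L})-1)=\cliff(C)$, so the numerical hypothesis of Theorem \ref{paranjape} holds, with equality.

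The one preliminary I would check with care is global generation, since $M_{\cal L}$ is only defined when ${\cal L}$ is globally generated. Suppose ${\cal L}$ were not, and let $B>0$ be its base divisor, ${\cal L}'={\cal L}(-B)$. Then $h^0({\cal L}')=h^0({\cal L})\geq 2$ and $\deg{\cal L}'<\deg{\cal L}$, so $\cliff({\cal L}')=\cliff({\cal L})-\deg B<\cliff(C)$. The inclusion $\omega_C\otimes{\cal L}^{-1}\hookrightarrow\omega_C\otimes({\cal L}')^{-1}$ gives $h^1({\cal L}')\geq h^1({\cal L})\geq 2$, so ${\cal L}'$ still contributes to the Clifford index; this contradicts the minimality defining $\cliff(C)$. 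Hence ${\cal L}$ is globally generated, and Theorem \ref{paranjape} applies, giving that $M_{\cal L}$ is semistable.

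It then remains to dispose of the two exceptional cases. Case $(i)$, namely ${\cal L}\cong\omega_C(D)$ with $D$ effective of degree $2$, I would exclude outright: Serre duality gives $h^1({\cal L})=h^0(\oo_C(-D))=0$ since $D>0$, which is incompatible with $h^1({\cal L})\geq 2$. Case $(ii)$ presupposes that $C$ is hyperelliptic. Consequently, when $C$ is not hyperelliptic neither exceptional case can arise, so $M_{\cal L}$ is stable, while in general it is semistable; this is exactly the assertion.

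The argument is short, as the substance is already contained in Theorem \ref{paranjape}. The only step requiring genuine attention is the global-generation reduction, whose crux is that passing to the base-point-free model ${\cal L}(-B)$ can only increase $h^1$, so that model still contributes to the Clifford index and minimality forces a contradiction. I do not anticipate any further obstacle.
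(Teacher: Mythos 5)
Your proof is correct and follows essentially the same route as the paper, which simply invokes Theorem \ref{paranjape} together with the standard fact that a line bundle computing the Clifford index is globally generated. You merely make explicit two details the paper leaves implicit --- the base-divisor argument for global generation and the exclusion of case $(i)$ via $h^1({\cal L}) \geqslant 2$ --- and both checks are sound.
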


Moreover, using a result contained in \cite{barstop}, we can prove, applying Theorem \ref{main}, the following.

\begin{propn}[\ref{bs08}]
Let $C$ be a curve such that $\cliff (C) \geqslant 4$.
Let $V \subset H^0(\omega_C)$ be a general subspace of codimension 
smaller than or equal to 2,  then $M_{V, \omega_C}$ is semistable.
\end{propn}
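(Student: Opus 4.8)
The plan is to deduce this from Theorem~\ref{main} applied to $\mathcal{L}=\omega_C$, so the work splits into checking the numerical hypotheses, securing that the general $V$ is a generating subspace, and establishing linear semistability of the triple $(C,\omega_C,V)$. First I would record that $\cliff(C)\geq 4$ forces $g\geq 9$, since $\cliff(C)\leq\lfloor (g-1)/2\rfloor$; in particular $C$ is neither hyperelliptic nor trigonal, so $\omega_C$ is very ample and $C$ sits in $\mathbb{P}^{g-1}$ as a canonical curve. Writing $c=\mathrm{codim}_{H^0(\omega_C)}V\leq 2$ and $\dim V=g-c$, a general such $V$ corresponds to projection from a centre $\Lambda$ of projective dimension $c-1\leq 1$. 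Since $(c-1)+\dim C<g-1$, a general centre is disjoint from $C$, so the general $V$ is globally generated and $M_{V,\omega_C}$ is defined.

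Next I would verify the hypotheses of Theorem~\ref{main}. The inequality (\ref{ahg}) reads
\[
\deg\omega_C-2(\dim V-1)=(2g-2)-2(g-c-1)=2c\leq 4\leq\cliff(C),
\]
which is exactly where the numerical matching of $\cliff(C)\geq 4$ with $c\leq 2$ is used. To place the situation in one of the cases of the theorem I would use the third condition: since $h^1(\omega_C)=1$ and $\dim V-2=g-c-2\geq 5>0$, the inequality
\[
c< 1+\frac{g}{g-c-2}
\]
holds for $c\in\{1,2\}$, while $c=0$ is the complete case covered by the first condition. Thus Theorem~\ref{main} applies and reduces the statement to linear semistability of $(C,\omega_C,V)$.

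It remains to prove that a general $V$ of codimension at most $2$ yields a linearly semistable triple; this is the only non-formal input, and it is where I would invoke \cite{barstop}. The cited result controls the degrees of the sub-line bundles generated by subspaces $W\subseteq V$ (equivalently the degrees of the images of the projected canonical curve) for a general projection, and under $\cliff(C)\geq 4$ these degrees satisfy the slope inequality defining linear semistability (Definition~\ref{ls}). Taking $V$ in the intersection of the two dense open subsets of the Grassmannian on which global generation and the \cite{barstop} estimate hold, both conditions are met simultaneously, and the conclusion of Theorem~\ref{main} then upgrades linear semistability to semistability of $M_{V,\omega_C}$.

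The hard part will be this last step: matching the statement of \cite{barstop} with the linear-semistability inequality for \emph{every} subspace $W\subseteq V$, and in particular ruling out destabilizing subseries arising from special secant or multisecant behaviour of the canonical curve. The Clifford-index hypothesis is precisely what should prevent such low-degree projections, but carefully tracking the genericity through all intermediate-dimensional subspaces is where the argument is most delicate. Once that degree estimate is in place, the rest is the bookkeeping carried out above.
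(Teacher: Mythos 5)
Your proposal is correct and takes essentially the same route as the paper: the paper's own proof handles the complete case by citing \cite{PR}, and for $c\in\{1,2\}$ it invokes the result of \cite{barstop} that a general projection of codimension at most $\cliff(C)/2$ is linearly (semi)stable, then concludes via Theorem \ref{noncompl} --- exactly your combination of the bound $2c\leqslant 4\leqslant \cliff(C)$ with case (\emph{iii}), since $c<1+g/(g-c-2)$ holds for $c\leqslant 2$. Your additional verifications (global generation of a general $V$, the genus bound $g\geqslant 9$, and the case bookkeeping) are correct fillings-in of details the paper leaves implicit, and your ``hard part'' is precisely the content of the cited \cite{barstop} result, which the paper also uses as a black box.
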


Some of the results above were previously known
(\emph{e.g.} Theorem \ref{paranjape} is contained in Paranjape's PhD. thesis,
and Corollary \ref{spiego2} follows from \cite{BPOrt})

It is worthwhile remarking that in the paper \cite{stabtrans} the first author
proves stability of some bundles $M_{V, \cal L}$ by a similar argument: showing first that 
if $M_{V, \cal L}$ is unstable then $(C, {\cal L}, V)$ needs to be linearly unstable,
and then showing that for general $V \subset H^0(\cal L)$ these are not 
linearly unstable.

We hope these methods can be of use in order to verify the stability 
of dual span bundles in more cases,
and generalized to investigate on the stability of 
bundles which are dual span of higher rank vector bundles.

\medskip

Moreover, we prove in Section \ref{cohom} that,  in some of the cases of Theorem \ref{main}, a stronger condition holds:
\emph{cohomological stability} (Definition \ref{cohostab}).

\begin{thmn}[\ref{uffa}]
Let $({\cal L}, V)$ be a $g^r_d$ on a smooth curve $C$, inducing a birational morphism. Suppose that
\begin{itemize}
\item[-] $d\leqslant 2r+\cliff C$; 
\item[-] $\mathrm{codim}_{H^0({\cal L})} V \leqslant h^1({\cal L})$. 
\end{itemize}
Then $M_{V,{\cal L}}$ is cohomologically semistable. It is strictly stable unless $d=2r$. 
\end{thmn}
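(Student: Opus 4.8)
The plan is to read off cohomological (semi)stability of $M:=M_{V,{\cal L}}$ directly from Definition \ref{cohostab}. Here $M$ has rank $r=\dim V-1$, determinant $\det M\cong{\cal L}^{-1}$ and slope $\mu(M)=-d/r$, and the condition to verify is the vanishing $H^0(C,\wedge^s M\otimes N)=0$ for every $1\le s\le r-1$ and every line bundle $N$ with $\deg N<\tfrac{s}{r}d$ (for semistability) resp. $\deg N\le\tfrac{s}{r}d$ (for stability). Two bookkeeping remarks organise the proof. First, because $\dim V-1=r$, the hypothesis $d\le 2r+\cliff C$ is precisely the Clifford bound \eqref{ahg}, while $\mathrm{codim}_{H^0({\cal L})}V\le h^1({\cal L})$ implies (for $r\ge 2$; the case $r=1$ is vacuous) the strict inequality of the third case of Theorem \ref{main}, so that Theorem \ref{main} applies and vector bundle (semi)stability of $M$ is equivalent to linear (semi)stability of $(C,{\cal L},V)$. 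Second, in characteristic $0$ cohomological semistability coincides with vector bundle semistability \cite{EL}; combined with the linear semistability granted by the hypotheses (the subsystem analogue of Proposition \ref{linear}), this already yields the ``cohomologically semistable'' assertion. The genuine content is therefore the strengthening to cohomological stability when $d\ne 2r$, namely the boundary vanishing $H^0(\wedge^s M\otimes N)=0$ for $\deg N=\tfrac{s}{r}d$.

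For this boundary vanishing I would argue by contradiction, extracting a destabilising linear subseries and invoking the classification of Proposition \ref{linear}. Wedging the defining sequence gives, for each $s$, the exact sequence
$$0\longrightarrow\wedge^s M\longrightarrow\wedge^s V\otimes\oo_C\longrightarrow\wedge^{s-1}M\otimes{\cal L}\longrightarrow 0,$$
so that $H^0(\wedge^s M\otimes N)=\ker\big(\wedge^s V\otimes H^0(N)\to H^0(\wedge^{s-1}M\otimes{\cal L}\otimes N)\big)$. A nonzero boundary section is a sub-line bundle $N^{-1}\hookrightarrow\wedge^s M$ of degree exactly $s\mu(M)=-sd/r$, i.e. a line subbundle attaining the slope of the semistable bundle $\wedge^s M$. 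The key point is that such an extremal line subbundle should come from a rank-$s$ destabilising subbundle of shape $M_{W,{\cal L}_W}$: a subspace $W\subseteq V$ with $\dim W=s+1$ generating ${\cal L}_W={\cal L}(-B)$, for which $N^{-1}=\det M_{W,{\cal L}_W}={\cal L}_W^{-1}$ and $\deg{\cal L}_W/(\dim W-1)=d/r$. Thus $(C,{\cal L},V)$ would carry a subseries meeting its slope with equality, so it would be linearly semistable but not linearly stable; by Proposition \ref{linear} this happens only for ${\cal L}\cong\omega_C(D)$ with $\deg D=2$, or for $C$ hyperelliptic with $\deg{\cal L}=2(\dim V-1)$, both of which force $d=2r$, the desired contradiction.

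The main obstacle is exactly the passage from an extremal line subbundle of $\wedge^s M$ to an honest destabilising subbundle $M_{W,{\cal L}_W}$. For $s=1$ this is the standard dictionary between line subbundles of $M$ and subpencils underlying linear stability, but for $s\ge 2$ a global section of $\wedge^s M$, hence the image of $N^{-1}$, need not be decomposable, so one cannot naively recover $W$. Overcoming this is where the Clifford hypothesis does the work: I would show that, under $d\le 2r+\cliff C$, the multiplication map $\wedge^s V\otimes H^0(N)\to H^0(\wedge^{s-1}M\otimes{\cal L}\otimes N)$ is injective off the decomposable locus, a base-point-free-pencil-trick/Koszul-cohomology statement whose failure would produce a line bundle contributing a Clifford index strictly below $\cliff C$, so that the only possible boundary classes are the decomposable ones coming from genuine subsystems. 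Here birationality of $\phi_{|V|}$ is essential: it guarantees that the extracted subseries ${\cal L}_W={\cal L}(-B)$ has no spurious base locus, so the count $\deg{\cal L}_W=sd/r$ is exact and the reduction to Proposition \ref{linear} is sharp, while the codimension hypothesis $\mathrm{codim}_{H^0({\cal L})}V\le h^1({\cal L})$ keeps both ${\cal L}_W$ and its Serre-residual in the range where they contribute to $\cliff C$. Once the boundary classes are pinned to decomposable ones, the slope-equality analysis of the previous paragraph isolates $d=2r$ and completes the proof; the remaining estimates are the routine Clifford-index bookkeeping already used for Theorem \ref{main}.
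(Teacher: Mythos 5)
Your proposal has a genuine gap in each half, and the second one is fatal. For the semistability half you invoke ``the linear semistability granted by the hypotheses (the subsystem analogue of Proposition \ref{linear})'', but no such analogue is available: Proposition \ref{linear} treats only the complete system $V=H^0(\mathcal{L})$, and the only non-complete linear (semi)stability results in the paper (\cite{barstop}) concern \emph{general} subspaces of $H^0(\omega_C)$, whereas Theorem \ref{uffa} makes no generality assumption on $V$. The complete-case proof does not transfer automatically: in the delicate subcase $h^1(\mathcal{P})=1$ the relevant comparison becomes $d'/r'\geqslant d/r$ with $r=\dim V-1$ strictly smaller than $h^0(\mathcal{L})-1$, so the target slope is larger and the chain of inequalities no longer closes without new work. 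Linear semistability of the triple is in fact \emph{true} under the hypotheses --- but only as a consequence of the cohomological semistability being proved, so assuming it at the start is circular. The fatal gap, which you flag yourself, is the stability half: you need every line subbundle $N^{-1}\hookrightarrow\bigwedge^{s}M$ of the extremal degree $-sd/r$ to be decomposable, i.e.\ of the form $\det M_{W,\mathcal{L}_W}$ for some $W\subset V$. This is exactly the difference between cohomological stability and slope stability (Remark \ref{rapporto}: the former can be strictly stronger, precisely because sections of $\bigwedge^{s}M\otimes N$ do not correspond to subbundles of $M$), and your proposed remedy --- a Koszul-type injectivity ``off the decomposable locus'' --- is never formulated as a precise statement, let alone proved. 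Note that if such a reduction held, you would be showing that cohomological stability coincides with slope stability for these bundles, an assertion much stronger than the theorem and nowhere claimed in the paper.

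The paper's actual proof avoids both linear stability and subbundles, and runs along Ein--Lazarsfeld lines. Birationality enters at the start: for $k<r$ general points, $\dim V(-D_k)=\dim V-k$ and $V(-D_k)$ still generates $\mathcal{L}(-D_k)$ (Proposition \ref{proppopa}); iterating to degree $r-1$ (Remark \ref{rempopa}) exhibits $M_{V,\mathcal{L}}$ as an extension $0\to\mathcal{O}_C(-p_r-\cdots-p_d)\to M_{V,\mathcal{L}}\to\oplus_{i=1}^{r}\mathcal{O}_C(-p_i)\to 0$. Taking $t$-th exterior powers and twisting by $\mathcal{A}^{-1}$ with $\deg\mathcal{A}^{-1}\leqslant td/r$, the subsheaf contributes line bundles of degree at most $(r-t)(1-d/r)<0$, which have no sections, while the quotient contributes terms $H^0(\mathcal{A}^{-1}(-D))$ with $D$ general effective of degree $t$; these vanish because Lemma \ref{lemstop} --- the Clifford-index case analysis on $h^1$, where your codimension hypothesis enters exactly as $r\geqslant d-g$ --- gives $h^0(\mathcal{A}^{-1})\leqslant t$, with the boundary degree $\deg\mathcal{A}^{-1}=td/r$ forcing strictness only when $\cliff(C)=0$ and $d/r=2$, whence the exception $d=2r$. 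So the two routes are genuinely different in kind, and to salvage yours you would have to prove both the non-complete analogue of Proposition \ref{linear} and, for $s\geqslant2$, the decomposability of extremal line subbundles of $\bigwedge^{s}M$, for which no mechanism is in sight.
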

%
%
%


It is natural to wonder wether the implication linear stability $\Rightarrow $ stability of DSB holds more generally.
No examples -to our knowledge- were known where the first  stability condition holds while the second one does not.
The answer to this question is negative in general,
and it turned out to be fairly easy to produce linearly stable line bundles whose DSB is not semistable: this is the content of Section \ref{sezserie2} .

Eventually (Section \ref{theta}),  we show that on any curve $C$ of even genus $g=2k$,
having general gonality $\gamma (C) = k+1$ 
and general Clifford index $\textrm{Cliff} (C) = k-1$,
there exist stable DSB's of slope $-3$. If furthermore the curve $C$ is Petri,
we show that these admit a generalized theta-divisor.

%






%

\subsubsection*{Acknowledgements} 
This work was started as the second author was invited at Universit\`a di Padova on 
``CARIPARO - Progetti di Eccellenza'' fund.
We are deeply thankful to Leticia Brambila-Paz and Peter Newstead their advices and useful comments, the first author is very grateful for their kindness and hospitality during the semester MOS 2011 at Cambridge.

\subsubsection*{Notation} 

We will work over the complex numbers, 
and $C$ will be a smooth projective curve, unless explicitly specified.

Let  $D$ be a divisor on $C$. As customary, we shall write $H^i(D)$ for $H^i(\oo_C(D))$, and if ${\cal F} $ is a vector bundle  we shall use the notation  ${\cal F}(D)$ for ${\cal F}\otimes \oo_C(D)$.


\section{Preliminary results on vector bundle stability}
\label{prelim}

Given a vector bundle $\cE$ on $C$ its slope is the rational number $\mu(\cE):=\deg \cE/\rk \cE$.
\begin{definition}
\label{mustabilita}
 The vector bundle $\cE$ is {\it stable}  (respectively {\it semistable}) if for any proper subbundle ${\cF}\subset \cE$, we have that $\mu(\cF)<\mu (\cE)$ (resp. $\leqslant$).
\end{definition}

Throughout the paper we will consider the following setting.

%

Let  $M_{V,{\cal L}} = \ker (V \otimes \oo \twoheadrightarrow {\cal L})$ the associated dual span bundle.
Let ${\cal S} \subset M_{V,{\cal L}}$ be  a  saturated proper subbundle.
Then there exist a vector bundle $F_{\cal S}$ and a subspace $W \hookrightarrow V$ fitting into the commutative diagram
$$
\xymatrix{
0\ar[r]&{\cal S}\ar[r]\ar@{^{(}->}[d]&W\otimes \oo_C\ar[r] \ar@{^{(}->}[d] &F_{\cal S}\ar[r]\ar[d]^\alpha& 0\\
0\ar[r]&M_{V,{\cal L}}\ar[r]&V\otimes \oo_C\ar[r]  &{\cal L}\ar[r]& 0}
$$
Indeed, define  $W \hookrightarrow V$ by
$W^* := \mathrm{Im} (V^* \to H^0({\cal S}^*))$;
then  $W^*$ generates ${\cal S}^*$. 
Then define $F_{\cal S}^*:= \ker (W^* \otimes \oo \twoheadrightarrow {\cal S}^*)$.

\begin{remark}\label{conticini}
Let us summarize some properties of these objects, well known to experts; see for instance \cite{ButNormal} for reference.
With the notation above,  the following hold.
\begin{enumerate}
\item The sheaf $F_{\cal S}$ is globally generated and $h^0(F_{\cal S}^*)=0$.
\item The induced map $\alpha\colon F_{\cal S}\longrightarrow {\cal L}$ is not the zero map.
\item If ${\cal S}$ is a maximal destabilizing for $M_{V,{\cal L}}$ then $\deg {F_{\cal S}}\leqslant \deg {\cal I}$, where ${\cal I}$ is $\mathrm{Im} (\alpha)$, and equality holds if and only if $\rk {F_{\cal S}}=1$.
\end{enumerate}
The only point worth verifying is the last.
We can form the following diagram

$$
\xymatrix{
0\ar[r] & {\cal S}\ar[r]\ar@{^{(}->}[d] & \ar[r]\ar@{=}[d]  W\otimes \oo_{{C}}\ar[r] & {F_{\cal S}}\ar[r]\ar[d]& 0 \\
 0\ar[r]& M_{W,I} \ar[r]\ar@{^{(}->}[d]& W \otimes \oo_{{C}}\ar[r]\ar[d] &{\cal I}\ar[r]\ar@{^{(}->}[d]  & 0 \\
0 \ar[r] & M_{V,{\cal L}} \ar[r]& V \otimes \oo_{{C}} \ar[r] & {\cal L} \ar[r] & 0\\
}
$$
If we require maximality of the subbundle ${\cal S}$,
and destabilization, 
we have 
\[
\mu ({\cal S}) =  \frac{- \deg {F_{\cal S}}}{\dim W - \rk {F_{\cal S}}} \geqslant \frac{-\deg {\cal L}}{\dim V -1} = \mu(M_{{\cal L}})\geqslant  \mu(M_{W, {\cal I}})= \frac{-\deg {\cal I}}{\dim W -1}.
\]
So, if $\rk F_{\cal S}>1$, we have
\[
\deg {F_{\cal S}} \leqslant \frac{\dim W -\rk F_{\cal S}}{\dim W -1} \deg {\cal I} < \deg {\cal I}.
\]
\end{remark}

\bigskip

\section{Linear stability and Clifford index}
\label{linear stability}

We give here a natural generalization of the notion of linear stability of a curve and a linear series on it, introduced by Mumford in \cite{Mum} (cf. \cite{LS}).

\begin{definition}\label{ls}
Let ${\cal L}$ be a degree $d$ line bundle on $C$, and $V\subseteq H^0({\cal L})$ a generating subspace of dimension $r+1$.
We say that the triple $(C, {\cal L},V)$ is {\em linearly semistable (resp. stable)} if  any linear series of degree $d'$ and dimension $r'$ contained in  $|V|$  satisfies $d'/r'\geqslant d/r$ (resp. $>$).
\end{definition}

In case $V=H^0({\cal L})$, we shall talk of the stability of the couple $(C, {\cal L})$. It is easy to see that in this case it is sufficient to verify that the inequality of the definition holds for any {\em complete} linear series in $|V|$.

\begin{remark}\label{ossLS}
It is clear that  the following conditions are equivalent:
\begin{enumerate}
\item the triple $(C,{\cal L}, V)$ is linearly stable;
\item the bundle $M_{V,{\cal L}}$ is not destabilized by any bundle $M_{V', {\cal L}'}$, with $V'\subseteq V$, and $V'\otimes \oo_C\longepi {\cal L}' \subset {\cal L}$.
\end{enumerate}
\end{remark}

Using the theorem of Riemann-Roch, it is not hard to prove that $(C,{\cal L})$ is linearly stable for any line bundle ${\cal L}$ of degree $\geqslant 2g+1$.

\medskip

Let now $C$ be of genus $g\geq 2$.
We now present a more general result relating linear stability to the Clifford index of the curve.
Let us recall that  {\em the Clifford index} of a curve $C$
of genus $g \geqslant 4$ is the integer: 
\[
\textrm{Cliff} (C) := 
\min \{ \deg ({\cal L}) -2 (h^0 ({\cal L}) -1) ~|~ {\cal L} \in \mbox{Pic}(C) ~,~ 
h^0({\cal L}) \geqslant 2 ~,~ h^1 ({\cal L}) \geqslant 2 \}.
\]

When $g = 2$,  we set $\textrm{Cliff} (C ) = 0$; when $g = 3$ we set $\textrm{Cliff} (C ) = 0$ or $1$ according to whether 
C is hyperelliptic or not.
%

Let $\gamma (C)$ be the gonality of the curve $C$. The following inequalities hold:
$$\gamma (C) -3 \leqslant \textrm{Cliff} (C) \leqslant \gamma (C) -2,$$
the case $\textrm{Cliff} (C) = \gamma (C) -2$ holding for general $\gamma (C)$-gonal curves in the moduli space of smooth curves
${\cal M}_g$.
Furthermore $\textrm{Cliff} (C) = 0$ if and only if $C$ is hyperelliptic.
\medskip

\begin{proposition}\label{linear}
Let $C$ be a curve of genus $g\geq 2$.
Let ${\cal L} \in \picard(C)$ be a globally generated line bundle
such that $\deg {\cal L} - 2 (h^0({\cal L}) -1) \leqslant \textrm{Cliff} (C)$.
Then ${\cal L}$ is linearly semistable. 
It is linearly stable unless 
${\cal L} \cong \omega_C(D)$ with $D$ an effective divisor of degree 2,
or
$C$ is hyperelliptic and  $\deg {\cal L} = 2 (h^0({\cal L}) -1)$.
\end{proposition}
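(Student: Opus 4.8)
The plan is to prove linear semistability directly from Definition \ref{ls}. As noted there, in the complete case it suffices to test the defining inequality on complete base-point-free subseries, that is, on globally generated sub-line-bundles $\mathcal{L}'=\mathcal{L}(-B)\hookrightarrow\mathcal{L}$ with $B$ effective of degree $e\geq 1$ and $h^0(\mathcal{L}')\geq 2$. Writing $d=\deg\mathcal{L}$, $r=h^0(\mathcal{L})-1$, $d'=d-e$ and $s=h^0(\mathcal{L}')-1$, semistability means $d'/s\geq d/r$ for every such $\mathcal{L}'$; since $\mathcal{L}$ is globally generated and $B\neq 0$ one always has $1\leq s<r$. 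Setting $c:=d-2r$ and $c':=d'-2s$ (the Clifford indices of the single bundles $\mathcal{L}$ and $\mathcal{L}'$), the substitutions $r=(d-c)/2$, $s=(d'-c')/2$ give the identity
\[
d'r-ds=\tfrac12\bigl(d\,c'-d'\,c\bigr),
\]
so the inequality $d'/s\geq d/r$ is \emph{equivalent} to $d\,c'\geq d'\,c$. The hypothesis is exactly $c\leq\cliff(C)$, and $0<d'<d$. This reformulation is the heart of the argument.

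With it, most configurations fall out at once. If $\mathcal{L}'$ is non-special then $c'=2g-d'$ and a short computation collapses $d\,c'-d'\,c$ to $2\bigl(ge+d'\,h^1(\mathcal{L})\bigr)>0$, giving strict inequality. If $\mathcal{L}'$ is special, Clifford's theorem gives $c'\geq 0$: hence when $c\leq 0$ we get $d\,c'\geq 0\geq d'\,c$, and when moreover $\mathcal{L}'$ contributes to the Clifford index (i.e.\ $h^1(\mathcal{L}')\geq 2$) we have $c'\geq\cliff(C)\geq c$, whence $d\,c'\geq d\,c>d'\,c$ since $d>d'$ and $c>0$. In all these cases semistability holds, strictly except when $c=c'=0$.

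The genuinely delicate case, which I expect to be the main obstacle, is $\mathcal{L}'$ special with $h^1(\mathcal{L}')=1$ and $c>0$: then $\mathcal{L}'=\omega_C(-D'')$ does \emph{not} contribute to the Clifford index, so the bound $c'\geq\cliff(C)$ is unavailable. I would split on $\mathcal{L}$. If $\mathcal{L}$ is special, Serre duality forces $h^1(\mathcal{L})=h^0(\mathcal{O}(D''-B))\leq h^0(\mathcal{O}(D''))=h^1(\mathcal{L}')=1$, so $\mathcal{L}=\omega_C(-E)$ with $c=\deg E$ and $c'=\deg E+e$, and $d\,c'-d'\,c=e(d+c)>0$. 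If $\mathcal{L}$ is non-special then $c=2g-d>0$ forces $d<2g$, while $r-s=e-1$ together with global generation of $\mathcal{L}$ (which already yields $r-s\geq 1$) forces $e\geq 2$; the desired $d\,c'\geq d'\,c$ then reduces to $ge\geq d$, which holds strictly because $ge\geq 2g>d$. Thus every subseries is non-destabilizing, proving linear semistability, and the inequality is strict unless $c=c'=0$.

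Finally I would read off the exceptions from the equality locus $c=c'=0$. This forces $\cliff(\mathcal{L})=0$, i.e.\ $\deg\mathcal{L}=2(h^0(\mathcal{L})-1)$, together with the existence of a proper globally generated special subbundle $\mathcal{L}'$ with $\cliff(\mathcal{L}')=0$. By the equality case of Clifford's theorem such an $\mathcal{L}'$ is either $\omega_C$ or, when $C$ is hyperelliptic, a multiple of the $g^1_2$. In the first case $\mathcal{L}=\omega_C(B)$ and $c=0$ forces $\deg B=2$, giving $\mathcal{L}\cong\omega_C(D)$ with $D$ effective of degree $2$; in the second case $C$ is hyperelliptic with $\deg\mathcal{L}=2(h^0(\mathcal{L})-1)$. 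Conversely, for these two families equality is genuinely attained by a destabilizing subbundle ($\mathcal{L}'=\omega_C$ in the first case, and a suitable multiple $k\cdot g^1_2\subseteq\mathcal{L}$ in the hyperelliptic case, which exists because $\mathcal{L}(-k\,g^1_2)$ has degree $\geq g$ hence is effective for an appropriate $k$), so stability fails exactly on the two stated families. This matches the statement and completes the proof.
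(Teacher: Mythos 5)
Your proof is correct, and it reaches the conclusion by a route that is organized quite differently from the paper's. The paper (proof of Proposition \ref{linear}) also reduces to complete subseries $\mathcal{P}\hookrightarrow\mathcal{L}$, but then argues directly on the slopes, splitting first on $h^1(\mathcal{L})\in\{0,1,\geq 2\}$ and then on $h^1(\mathcal{P})$; its delicate case $h^1(\mathcal{P})=1$, $h^1(\mathcal{L})=0$ is settled via the four-term sequence $0\to H^0(\mathcal{P})\to H^0(\mathcal{L})\to H^0(D,\oo_D)\to H^1(\mathcal{P})\to 0$, which yields $\deg\mathcal{L}-\deg\mathcal{P}=h^0(D,\oo_D)\geq 2$ and a chain of ratio inequalities whose equality conditions ($\deg\mathcal{P}=2g-2$, $h^0(D,\oo_D)=2$) produce the exception $\mathcal{L}\cong\omega_C(D)$ from within the estimate. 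You instead linearize the problem via the identity $d'r-ds=\tfrac12(dc'-d'c)$, so that the slope test becomes $dc'\geqslant d'c$ in the two Clifford defects; this makes the semistability part nearly automatic (Clifford's inequality $c'\geqslant 0$ plus the contribution bound $c'\geqslant\cliff(C)$ dispose of all but one case), confines the equality locus cleanly to $c=c'=0$, and extracts the exceptional families from the equality case of Clifford's theorem rather than from an inequality chain — a step the paper never needs to invoke. Your treatment of the remaining case $h^1(\mathcal{L}')=1$, $c>0$ is also different: you split on the speciality of $\mathcal{L}$, using Serre duality (the injection $H^0(\oo(D''-B))\hookrightarrow H^0(\oo(D''))$) when $\mathcal{L}$ is special, and the global-generation bound $e=r-s+1\geqslant 2$ when it is not; this last input is exactly the paper's $h^0(D,\oo_D)\geq 2$ in different clothing, so the two proofs ultimately rest on the same two pillars (Clifford plus global generation), but your bookkeeping is tighter and arguably more transparent. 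One marginal remark: your converse verification that stability genuinely fails in the hyperelliptic exceptional family uses the sufficient condition $\deg(\mathcal{L}(-k\,g^1_2))\geqslant g$, which requires $\deg\mathcal{L}\geqslant g+2$ and so misses tiny cases such as $\mathcal{L}=g^1_2$ itself (where stability holds vacuously for lack of proper subseries); but since the statement's ``unless'' only demands stability \emph{outside} the exceptional families — which you prove in full — and the paper itself only asserts the converse parenthetically, this does not affect correctness.
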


\begin{proof}
Recall that it is sufficient to check linear stability for complete linear subsystems of $|{\cal L}|$ 
(cf. Remark \ref{ossLS}).
Let ${\cal P}\hookrightarrow {\cal L}$ be a line bundle generated by a subspace of $H^0({\cal L})$. 
Observe that  
$$H^1({\cal P})^* = H^0(\omega \otimes {\cal P}^*) 
\supseteq H^0(\omega \otimes {\cal L}^*) =  H^1({\cal L})^*.$$
Let us distinguish three cases:

(1) $h^1({\cal L})\geqslant 2$. In this case  ${\cal L}$ computes the Clifford index: 
$$\deg {\cal L}=2(h^0({\cal L})-1)-\textrm{Cliff} (C).$$
Hence $h^1({\cal P}) \geqslant h^1({\cal L}) \geqslant 2$ and ${\cal P}$ contributes 
to the Clifford index $\textrm{Cliff}(C)$, 
so  $\deg {\cal P} \geqslant 2(h^0({\cal L}) -1) + \textrm{Cliff}(C)$.

Then
\[
\frac{\deg {\cal P}}{\dim V -1} \geqslant \frac{\deg {\cal P}}{h^0({\cal P}) -1}
\geqslant 2 + \frac{\textrm{Cliff}(C)}{h^0({\cal P}) -1}
\geqslant 2 + \frac{\textrm{Cliff}(C)}{h^0({\cal L}) -1}
= \frac{\deg {\cal L}}{h^0({\cal L}) -1},
\]
where the last inequality is strict unless $\textrm{Cliff}(C) =0$ and 
$\deg {\cal L} = 2 (h^0({\cal L}) -1)$,
in which case the curve is hyperelliptic,
and  ${\cal L}$ is linearly semistable but not linearly stable
(it can be shown that the dual of the $g^1_2$ maps to $M_{\cal L}$ in this case).

(2) If $h^1({\cal L})=1$,
then either $h^1({\cal P}) =1$ or $h^1({\cal P}) \geqslant 2$.
In the last case ${\cal P}$ contributes to the Clifford index,
so 
\[
\deg {\cal P} / (h^0({\cal P})-1) \geqslant 2 + \textrm{Cliff} (C)/ (h^0({\cal L})-1) \geqslant \deg {\cal L} / (h^0({\cal L}) -1) ~,
\]
with strict inequality unless $C$ is hyperelliptic and 
$\deg {\cal L} = 2 (h^0({\cal L})-1)$.

If $h^1({\cal P}) = h^1({\cal L}) =1$,
then, as $\deg {\cal P} < \deg {\cal L}$, we have that
$\deg {\cal P} / (\deg {\cal P} + 1 - g) > \deg {\cal L} / (\deg {\cal L} + 1 - g)$.

(3) If $h^1({\cal L}) = 0$,
then $h^1({\cal P}) =0$, or $h^1({\cal P}) =1$, or $h^1({\cal P}) \geqslant 2$.
In the last case ${\cal P}$ contributes to the Clifford index, and we can reason as above.
If $h^1({\cal P}) = h^1({\cal L}) =0$,
then as $\deg {\cal P} < \deg {\cal L}$ we have
$\deg {\cal P} / (\deg {\cal P}  - g) > \deg {\cal L} / (\deg {\cal L}  - g)$.

At last, suppose that  $h^1 ({\cal P}) = 1$. Then of course $\deg {\cal P} \leqslant 2g -2$. Consider the exact sequence
\[
0 \to H^0({\cal P}) \to H^0({\cal L}) \to H^0(D, \oo_D) \to H^1({\cal P}) \to 0,
\]
where $D$ is an effective divisor  such that ${\cal  P}(D)\cong{\cal L}$.
From this sequence,  remarking that the inclusion $H^0({\cal P}) \subset H^0({\cal L})$ must be strict, we deduce that 
  $\deg {\cal L} - \deg {\cal P} = h^0(D, \oo_D) \geqslant 2$. 

We thus have the following chain of inequalities
\[
\frac{\deg {\cal P}}{h^0({\cal P})-1} = \frac{\deg {\cal P}}{\deg {\cal P} + 1 -g}  
\geqslant
\frac{\deg {\cal L}}{\deg {\cal L} -g}
= \frac{\deg {{\cal P}} + h^0(D, \oo_D)}{\deg {\cal P} + h^0({D}, \oo_D) -g}.
\]
In fact 
${\deg {\cal P}}/{(\deg {\cal P} + 1 -g)} \leqslant 
(\deg {{\cal P}} + h^0(D, \oo_D))/{(\deg {\cal P} + h^0({D}, \oo_D) -g)}$
if and only if $\deg {{\cal P}} \leqslant (h^0(D, \oo_D)) (g-1)$ and
as $\deg {\cal P} \leqslant 2g -2$
the inequality is always verified and is strict unless 
$\deg {\cal P} = 2g -2$ and $h^0(D, \oo_D) =2$.
In this last case we have that $h^0({\cal P}^* \otimes \omega_C)=1$
so ${\cal P} \cong \omega_C$ and ${\cal L} \cong \omega_C(D)$ as wanted.
\end{proof}

\begin{remark}
A similar result on non-complete canonical systems was obtained in \cite{barstop}: it states that the triple $(C, \omega_C, V)$ where $V$ is a general 
subspace $V \subset H^0(\omega_C)$ of codimension $c \leqslant \cliff (C) /2 ~$
is linearly semistable. Note that the condition on codimension is analogous to the condition of Proposition  \ref{linear}: $\deg \omega_C=2g-2 \leqslant 2(\dim V-1) - \cliff (C)$. 
\end{remark}

\section{The slope of determinant bundles}
\label{cliff}

Let us state  the following well known fact (see for instance 5.0.1 \cite{Huy}).

\begin{proposition}\label{detsequence}
Let ${\cal F}$ be a globally generated vector bundle of rank $r\geqslant 2$.
Let ${\cal A} = \det ({\cal F})$. For a general choice of a subspace $T\subset H^0({\cal F})$ of dimension $r-1$,
evaluation on global sections of ${\cal F}$ gives the following exact sequence: 
\begin{equation}
\label{seqdet}
0 \longrightarrow  T\otimes \oo_C \longrightarrow   {\cal F}  \longrightarrow   {\cal A}  \longrightarrow   0.
\end{equation}
\end{proposition}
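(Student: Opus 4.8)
The plan is to realise the desired sequence as the evaluation sequence of a \emph{general} $(r-1)$-dimensional subspace $T \subseteq H^0({\cal F})$, and to show that for such $T$ the evaluation map $T \otimes \oo_C \to {\cal F}$ is a subbundle inclusion, i.e.\ injective on every fibre. Since ${\cal F}$ is globally generated, the evaluation $\mathrm{ev}_p \colon H^0({\cal F}) \to {\cal F}_p$ is surjective at each point $p \in C$, so its kernel $K_p := \ker(\mathrm{ev}_p)$ has codimension $r$ in $H^0({\cal F})$. The restricted evaluation $T \otimes \oo_C \to {\cal F}$ fails to be fibrewise injective at $p$ precisely when $T \cap K_p \neq 0$; thus it suffices to produce a single $T$ with $T \cap K_p = 0$ for \emph{every} $p$.

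First I would carry out the dimension count that makes the word \emph{general} precise. Working in the Grassmannian $G = \mathrm{Gr}(r-1, H^0({\cal F}))$, I form the incidence variety
\[
Z = \{ (p, T) \in C \times G \mid T \cap K_p \neq 0 \}.
\]
For a fixed $p$, the locus $\{ T \in G \mid \dim(T \cap K_p) \geqslant 1 \}$ is the special Schubert variety of $(r-1)$-planes meeting the subspace $K_p$ of codimension $r$; since $\dim T + \dim K_p = h^0({\cal F}) - 1$ is one less than the ambient dimension, this locus has codimension $2$ in $G$ (equivalently, a general $(r-1)$-plane meets a codimension-$r$ subspace in the expected dimension $-1$, and dropping to dimension $0$ is a codimension-$2$ determinantal condition). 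Fibering over $C$ then gives $\dim Z \leqslant \dim C + (\dim G - 2) = \dim G - 1$, where it is crucial that $C$ is a \emph{curve}. Hence the projection $Z \to G$ is not dominant, so a general $T \in G$ lies outside its image, giving $T \cap K_p = 0$ for all $p$ simultaneously.

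For such a $T$ the map $T \otimes \oo_C \to {\cal F}$ is injective on every fibre, hence a subbundle inclusion, and its quotient $Q$ is locally free of rank $r - (r-1) = 1$, i.e.\ a line bundle. Taking determinants in the short exact sequence $0 \to T \otimes \oo_C \to {\cal F} \to Q \to 0$ and using $\det(T \otimes \oo_C) = \oo_C$ yields $Q \cong \det {\cal F} = {\cal A}$, which is exactly the sequence (\ref{seqdet}). (The case $r=2$ is the familiar statement that a general section of a globally generated rank-$2$ bundle on a curve is nowhere vanishing.)

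I expect the dimension count of the degeneracy locus $Z$ to be the only genuine point of the argument: once the codimension of the Schubert locus is pinned to $2$, the one-dimensionality of $C$ forces $Z$ to have codimension at least $1$ in $G$, and everything follows. The remaining steps — that fibrewise injectivity gives a subbundle with locally free quotient, and the determinant identification of that quotient — are formal and require no further input.
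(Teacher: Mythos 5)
Your proof is correct, and it is the standard argument: the paper itself states this proposition without proof, citing it as well known (5.0.1 of \cite{Huy}), and the proof behind that reference is exactly your incidence-variety dimension count — the locus of $(r-1)$-planes meeting the codimension-$r$ kernel $K_p$ is a codimension-$2$ special Schubert variety, so sweeping over the one-dimensional $C$ leaves the bad locus of codimension at least $1$ in the Grassmannian, and fibrewise injectivity plus taking determinants identifies the line bundle quotient with $\det {\cal F}$. Nothing is missing.
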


The following argument will be a key point in our proof. 
It is largely inspired by \cite{Butler}. 

\begin{proposition}
\label{splitglob}
Let ${\cal F}$ be a globally generated vector bundle of rank $r\geqslant 2$ and  $h^0( {\cal F}^*)=0$.
If the sequence (\ref{seqdet}) is \emph{exact on global sections},
then  $\deg {\cal A} = \deg {\cal F} \geqslant \gamma (h^0({\cal A}) -1)$, where 
$\gamma$ is the gonality of the curve $C$.
\end{proposition}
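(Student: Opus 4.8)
The plan is to separate the two assertions. The equality $\deg {\cal A} = \deg {\cal F}$ is immediate: in (\ref{seqdet}) the subsheaf $T \otimes \oo_C$ has degree $0$, so additivity of degrees gives $\deg {\cal F} = \deg {\cal A}$ (consistently with ${\cal A} = \det {\cal F}$). For the inequality, the first step is to translate the hypothesis. Since $H^0(T \otimes \oo_C) = T$ has dimension $r-1$, exactness on global sections means $0 \to T \to H^0({\cal F}) \to H^0({\cal A}) \to 0$, whence $h^0({\cal A}) = h^0({\cal F}) - (r-1)$ and therefore $h^0({\cal A}) - 1 = h^0({\cal F}) - r$. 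Thus the target is equivalent to $\deg {\cal F} \geqslant \gamma\,(h^0({\cal F}) - r)$, which is a bound on the slope of the syzygy bundle $M_{\cal F} = \ker(H^0({\cal F}) \otimes \oo_C \to {\cal F})$: indeed $\rk M_{\cal F} = h^0({\cal F}) - r = h^0({\cal A}) - 1$ and $\deg M_{\cal F} = -\deg {\cal F}$, so the claim reads $\mu(M_{\cal F}) \leqslant -\gamma$.

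Second, I would reduce to the rank-two case. Choosing $r-2$ general members of $T$ spanning a trivial subbundle $\oo_C^{\,r-2} \subset {\cal F}$ and forming the quotient ${\cal Q} = {\cal F}/\oo_C^{\,r-2}$, one obtains a globally generated rank-two bundle with $\det {\cal Q} = {\cal A}$ and $h^0({\cal Q}^*) = 0$, whose own determinant sequence $0 \to \oo_C \to {\cal Q} \to {\cal A} \to 0$ is still exact on global sections (the surjection $H^0({\cal F}) \twoheadrightarrow H^0({\cal A})$ factors through $H^0({\cal Q})$, and left-exactness forces $h^0({\cal Q}) = h^0({\cal A}) + 1$). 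Since neither $\deg {\cal A}$ nor $h^0({\cal A})$ changes, it suffices to treat $r = 2$; note also that removing the trivial constant sections does not alter the syzygy bundle, so $M_{\cal F} \cong M_{\cal Q} \cong M_{\cal A}$ throughout.

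For $r = 2$ the geometric heart, in the spirit of \cite{Butler}, is to exploit global generation through the ruled surface $P = \pr({\cal F}) \xrightarrow{\pi} C$. The tautological quotient $\oo_P(1)$ is globally generated with $H^0(P, \oo_P(1)) = H^0({\cal F})$ and satisfies $\oo_P(1)^2 = \deg {\cal F} = \deg {\cal A}$. It defines a morphism $\Phi \colon P \to \pr^{\,h^0({\cal A})}$ which maps each fibre $\pi^{-1}(x) \cong \pr^1$ isomorphically onto a line $\ell_x$; the image $\Sigma = \Phi(P)$ is thus a nondegenerate surface scroll swept out by the lines $\ell_x$. Being a nondegenerate surface in $\pr^{\,h^0({\cal A})}$, it has $\deg \Sigma \geqslant h^0({\cal A}) - 1$, while $\deg {\cal A} = \oo_P(1)^2 = (\deg \Phi)\cdot \deg \Sigma$. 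The remaining point is that $\deg \Phi \geqslant \gamma$: the rulings $x \mapsto \ell_x$ induce a finite map from $C$ onto the base $B$ of the ruling, through which one extracts a pencil on $C$ of degree controlled by $\deg \Phi$, bounded below by the gonality. Combining the two estimates yields $\deg {\cal A} \geqslant \gamma\,(h^0({\cal A}) - 1)$.

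I expect the last point to be the main obstacle: controlling $\deg \Phi$, equivalently the geometry of the ruling base $B$. When $B \cong \pr^1$ the fibre count produces a genuine pencil of degree $\deg \Phi$ on $C$, so $\gamma \leqslant \deg \Phi$ at once; when $B$ has positive genus one must show that ${\cal F}$ is essentially pulled back from $B$ and descend the statement to $B$, where the inequality $\gamma(C) \leqslant (\deg \Phi)\,\gamma(B)$ together with the higher degree forced on a scroll over a positive-genus base closes the estimate. This is exactly where global generation and, via the reduction above, exactness on global sections are indispensable: they exclude the degenerate configurations (such as a canonical series, for which $\deg{\cal A}/(h^0({\cal A})-1)$ would drop below $\gamma$) by forcing the multiplication map $H^0(\omega_C) \otimes H^0({\cal A}) \to H^0(\omega_C \otimes {\cal A})$ to be non-surjective — a constraint incompatible with such low reduced slope.
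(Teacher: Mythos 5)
Your preliminary reductions are sound: the degree equality, the count $h^0({\cal A})=h^0({\cal F})-(r-1)$, and the reduction to rank $2$ via a trivial subbundle $\oo_C^{\,r-2}\subset T\otimes\oo_C$ (which is indeed a subbundle, since $T\otimes \oo_C$ is) all work, and the scroll estimate $\deg {\cal A}=(\deg\Phi)\cdot\deg\Sigma\geqslant (\deg\Phi)(h^0({\cal A})-1)$ is correct. But the proof has a genuine gap exactly where you flag it: the inequality $\deg\Phi\geqslant\gamma$ is never established, and it is precisely the point where the hypothesis of exactness on global sections must enter \emph{structurally}, not just numerically. In your outline that hypothesis is only used to compute $h^0$; yet for a general globally generated rank-$2$ bundle ${\cal F}$ with $h^0({\cal F}^*)=0$ (where the determinant sequence exists but is not exact on sections), $\Phi$ is birational or even an embedding, so $\deg\Phi=1<\gamma$. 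Hence no amount of scroll geometry alone can yield $\deg\Phi\geqslant\gamma$. Your fallback for the positive-genus base $B$ is moreover circular and, as a general statement, false: combining $\gamma(C)\leqslant(\deg\Phi)\,\gamma(B)$ with $\deg{\cal A}=(\deg\Phi)\deg\Sigma$ would require $\deg\Sigma\geqslant\gamma(B)\,(h^0({\cal A})-1)$, which is the proposition itself over $B$, and which fails for scrolls in general (an elliptic quintic scroll in $\pr^4$ has degree $5<2\cdot 3$). You would also need to prove, not assert, that ${\cal F}$ descends to $B$, and exactness on global sections need not descend with it.

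The paper's proof supplies exactly the missing mechanism, and your final sentence brushes against it without carrying it out. Tensor (\ref{seqdet}) with $\omega_C$: since $h^1({\cal F}\otimes\omega_C)^*\cong h^0({\cal F}^*)=0$ by Serre duality, the coboundary $H^0({\cal A}\otimes\omega_C)\to H^1(\omega_C^{\oplus r-1})\neq 0$ is surjective, so $H^0({\cal F}\otimes\omega_C)\to H^0({\cal A}\otimes\omega_C)$ is \emph{not} surjective. Exactness on global sections makes the column $0\to\omega_C^{\oplus r-1}\to H^0({\cal F})\otimes\omega_C\to H^0({\cal A})\otimes\omega_C\to 0$ exact, and a diagram chase then shows the multiplication map $H^0({\cal A})\otimes H^0(\omega_C)\to H^0({\cal A}\otimes\omega_C)$ is not surjective. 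The decisive input, absent from your proposal, is Green's Castelnuovo-type theorem (\cite{green}, Theorem 4.b2): for a base-point-free ${\cal A}$ this failure of surjectivity forces the image of $\phi_{|{\cal A}|}$ to be a rational normal curve in $\pr(H^0({\cal A})^*)$, whence $C$ covers $\pr^1$ with degree $\deg{\cal A}/(h^0({\cal A})-1)\geqslant\gamma$. Note also that your last sentence inverts the logic: non-surjectivity of the multiplication map is not something to be shown ``incompatible'' with low reduced slope; rather, via Green's theorem it \emph{produces} the rational normal curve and hence the gonality bound. Your scroll picture is a reasonable heuristic (the section $C_0\subset\pr({\cal F})$ corresponding to ${\cal F}\twoheadrightarrow{\cal A}$ is mapped by $\Phi$ exactly through $\phi_{|{\cal A}|}$ thanks to exactness on sections), but without the Castelnuovo/Green step it cannot close.
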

\begin{proof}
Let us consider the sequence (\ref{seqdet}) tensored with $\omega_C$. By taking the cohomology sequence, as $h^0({\cal F}^*)=0$, 
we can conclude that the homomorphism $H^0({\cal F} \otimes \omega_C)\longrightarrow H^0({\cal A}\otimes \omega_C)$ is not surjective. 
From this, we derive that the multiplication homomorphism 
\begin{equation}\label{multiplication}
H^0({\cal A})\otimes H^0(\omega_C)\longrightarrow H^0({\cal A} \otimes \omega_C)
\end{equation}
fails to be surjective. 
Indeed, let us consider the commutative diagram:
$$
\xymatrix{
&& 0 \ar[d]& 0 \ar[d]&\\
&0\ar[r]\ar[d] &  \oplus^{\rk {\cal F} - 1} \omega_C\ar[d]\ar[r]& \oplus^{\rk {\cal F} - 1} \omega_C\ar[r]\ar[d]& 0\\
0\ar[r]&M_{{\cal F}}\otimes\omega_C\ar[r]\ar@{^{(}->}[d] &H^0({\cal F}) \otimes\omega_C\ar[r] \ar[d] &{\cal F}\otimes\omega_C\ar[r]\ar[d]& 0\\
0\ar[r]&M_{{\cal A} }\otimes\omega_C\ar[r] &H^0({\cal A} ) \otimes\omega_C\ar[r] \ar[d] &{\cal A} \otimes\omega_C\ar[r]\ar[d]& 0\\
&& 0&0&\\
}
$$
Remark that the middle column is exact by our assumptions on the exact sequence 
(\ref{seqdet}).
By taking global sections, we  have the commutative diagram
$$
\xymatrix{
H^0({\cal F})\otimes H^0(\omega_C)\ar[r]\ar[d]&H^0({\cal F} \otimes \omega_C)\ar[d]\\
H^0({\cal A} )\otimes H^0(\omega_C)\ar[r]&H^0({\cal A} \otimes \omega_C)\\
}
$$
where the first vertical arrow is surjective, while the second - as it is shown above - is not. 
Hence the bottom horizontal arrow cannot be surjective.

From a result of Castelnuovo type due to Mark Green (\cite{green} Theorem 4.b2, see also \cite{Ciliberto}) we have that,
for any base point free line bundle ${\cal A} $,
the sequence (\ref{multiplication}) fails to be surjective  only if the image of the morphism induced by ${\cal A} $ is a rational normal curve in $\pr(H^0({\cal A} )^*)$.
Hence we have that $\deg {\cal A} \geqslant \gamma (h^0({\cal A} )-1)$, where $\gamma$ is the gonality of $C$, as wanted. 

\end{proof}

We now state a consequence on dual span bundles that will be a key point in our arguments.
As usual, let ${\cal L}$ is a line bundle on  $C$ and $V\subseteq H^0({\cal L})$ a generating subspace. 
Let ${\cal S} \subset M_{V,{\cal L}}$ be  a  saturated subbundle, and $F_{\cal S}$ and ${\cal A}= \det F_{\cal S}$ as in   Remark \ref{conticini}.

\begin{lemma}
\label{ohi}
Suppose that  $\rk F_{\cal S}\geqslant 2$. If $F_{\cal S}$ fits in an exact sequence 
$$
0 \longrightarrow  \oplus^{\rk F_{\cal S} -1} \oo_C \longrightarrow   F_{\cal S}  \longrightarrow   {\cal A}  \longrightarrow   0 
$$
which is also exact on global sections,
then the following hold.
\begin{enumerate}
\item If ${\cal L}$ verifies $ \deg {\cal L} \leqslant \gamma (\dim V -1)$,
then $\mu ({\cal S}) \leqslant \mu (M_{V,{\cal L}})$.
Furthermore, we have equality if and only if
\begin{itemize}
\item[-] $W = H^0 (F_{\cal S})$,  
\item[-] $ \gamma = \deg {\cal A} /  (h^0({\cal A}) -1)$,
\item[-] $\gamma  = \deg {\cal L} /(\dim V -1)$.
\end{itemize}

\item If ${\cal L}$ verifies
\(
\deg {\cal L} < \gamma (\dim V -1)
\),
then $\mu ({\cal S}) < \mu (M_{V,{\cal L}})$.
\end{enumerate}
\end{lemma}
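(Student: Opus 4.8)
The plan is to reduce the whole statement to a single chain of degree inequalities fed through Proposition \ref{splitglob}. Writing $f=\rk F_{\mathcal S}\geqslant 2$ and $\mathcal A=\det F_{\mathcal S}$, the top row of the diagram in Remark \ref{conticini} gives $\rk{\mathcal S}=\dim W-f$ and $\deg{\mathcal S}=-\deg F_{\mathcal S}=-\deg{\mathcal A}$, so that
\[
\mu({\mathcal S})=\frac{-\deg{\mathcal A}}{\dim W-f},\qquad \mu(M_{V,{\mathcal L}})=\frac{-\deg{\mathcal L}}{\dim V-1}.
\]
Thus proving $\mu({\mathcal S})\leqslant\mu(M_{V,{\mathcal L}})$ amounts to the reversed inequality between the positive quantities $\deg{\mathcal A}/(\dim W-f)$ and $\deg{\mathcal L}/(\dim V-1)$, and the entire lemma will follow from controlling $\deg{\mathcal A}/(\dim W-f)$ from below.

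First I would record the two numerical inputs. Taking global sections of the hypothesized sequence, which is exact on $H^0$, and using $h^0(\oo_C)=1$, gives $h^0(F_{\mathcal S})=h^0({\mathcal A})+f-1$. On the other hand the top row shows that $W\hookrightarrow H^0(F_{\mathcal S})$ is injective (its kernel $H^0({\mathcal S})$ vanishes because $H^0(M_{V,{\mathcal L}})=0$, hence $H^0({\mathcal S})=0$), so $\dim W\leqslant h^0(F_{\mathcal S})$, whence $\dim W-f\leqslant h^0({\mathcal A})-1$. Since $F_{\mathcal S}$ is globally generated with $h^0(F_{\mathcal S}^*)=0$ (Remark \ref{conticini}) and $f\geqslant 2$, Proposition \ref{splitglob} applies and yields $\deg{\mathcal A}\geqslant\gamma\,(h^0({\mathcal A})-1)$, with $\deg{\mathcal A}>0$ for the same reason. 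Combining these with the hypothesis $\deg{\mathcal L}\leqslant\gamma(\dim V-1)$ I obtain
\[
\frac{\deg{\mathcal A}}{\dim W-f}\;\geqslant\;\frac{\deg{\mathcal A}}{h^0({\mathcal A})-1}\;\geqslant\;\gamma\;\geqslant\;\frac{\deg{\mathcal L}}{\dim V-1},
\]
which is exactly $\mu({\mathcal S})\leqslant\mu(M_{V,{\mathcal L}})$, proving the inequality in (1). Part (2) is then immediate: the strict hypothesis $\deg{\mathcal L}<\gamma(\dim V-1)$ makes the last inequality strict, hence the whole chain strict.

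It remains to analyse the equality case in (1), which I expect to be the only genuinely delicate point. Equality $\mu({\mathcal S})=\mu(M_{V,{\mathcal L}})$ forces equality at each of the three steps above. Equality at the first step means $\dim W-f=h^0({\mathcal A})-1$; together with $h^0(F_{\mathcal S})=h^0({\mathcal A})+f-1$ and the injection $W\hookrightarrow H^0(F_{\mathcal S})$ this forces $\dim W=h^0(F_{\mathcal S})$, i.e. $W=H^0(F_{\mathcal S})$. Equality at the second step reads $\deg{\mathcal A}=\gamma(h^0({\mathcal A})-1)$, i.e. $\gamma=\deg{\mathcal A}/(h^0({\mathcal A})-1)$, and at the third step $\gamma=\deg{\mathcal L}/(\dim V-1)$. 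These are precisely the three stated conditions, and conversely if all three hold the chain collapses to equalities. The care needed here is entirely in the first step: the sharpness of $\dim W-f\leqslant h^0({\mathcal A})-1$ must be translated into $W$ filling all of $H^0(F_{\mathcal S})$, and this depends crucially on the identity $h^0(F_{\mathcal S})=h^0({\mathcal A})+f-1$, which is valid only because the defining sequence is exact on global sections.
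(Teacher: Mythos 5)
Your proof is correct and is essentially the paper's own argument: both rest on the same chain $\gamma \leqslant \deg {\cal A}/(h^0({\cal A})-1) \leqslant \deg {\cal A}/(\dim W - \rk F_{\cal S}) \leqslant \deg {\cal L}/(\dim V -1) \leqslant \gamma$, obtained from Proposition \ref{splitglob} together with $\dim W \leqslant h^0(F_{\cal S})$ and $h^0(F_{\cal S}) - \rk F_{\cal S} = h^0({\cal A})-1$, with the equality case read off from when the chain collapses. The only differences are presentational: you run the chain directly to conclude $\mu({\cal S}) \leqslant \mu(M_{V,{\cal L}})$ where the paper assumes a destabilization and closes the circle of inequalities, and you make explicit the injectivity $W \hookrightarrow H^0(F_{\cal S})$ via $H^0(M_{V,{\cal L}})=0$, which the paper leaves implicit.
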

\begin{proof}
Note that, as $\rk {F_{\cal S}} \geqslant 2$, 
\[
\rk {\cal S} = \dim W - \rk {F_{\cal S}} 
\leqslant h^0 ({F_{\cal S}}) - \rk {F_{\cal S}} = h^0 ({\cal A}) -1.
\]
So if we have 
\[
\mu ({\cal S}) = \frac{-deg {\cal A}}{\dim W - \rk {F_{\cal S}}} \geqslant \mu (M_{V,{\cal L}}) 
= \frac{-deg {\cal L}}{\dim V - 1} ,
\]
then
\[
\gamma \leqslant \frac{\deg {\cal A}}{h^0({\cal A}) -1}  
\leqslant \frac{\deg {\cal A}}{\dim W - \rk {F_{\cal S}}} 
\leqslant \frac{\deg {\cal L}}{\dim V -1} \leqslant \gamma  .
\]

So the inequality 
$\mu ({\cal S}) \geqslant \mu (M_{V,{\cal L}})$
cannot hold strict,
and it is an equality if and only if
$W = H^0 ({F_{\cal S}})$, and  $ \gamma = \deg {\cal A} /  (h^0({\cal A}) -1)
 = \deg {\cal L} /(\dim V -1)$.
\end{proof}

\medskip


\section{Stability of DSB's in the complete case}
\label{complete}

The main result of this section is the first part of Theorem \ref{main}.

\begin{theorem}
\label{princthm}
Let ${\cal L} \in \picard(C)$ be a globally generated line bundle
such that 
$$\deg {\cal L} - 2 (h^0({\cal L}) -1) \leqslant \textrm{Cliff} (C).$$
Then ${\cal L}$ is linearly (semi)stable if and only if $M_{\cal L}$ is (semi)stable. 
\end{theorem}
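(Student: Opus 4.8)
The statement is an equivalence, and I would treat its two directions as the two sides of a single contrapositive argument, tracking strictness so that the semistable and the stable cases are handled in parallel. One implication is formal: if $M_{\cal L}$ is (semi)stable then $(C,{\cal L})$ is linearly (semi)stable, since by Remark \ref{ossLS} any sub-linear-series of $|{\cal L}|$ violating the linear (semi)stability inequality would produce a sub-dual-span-bundle $M_{W,{\cal P}}$ destabilizing $M_{\cal L}$. The content is the converse, which I would prove by contraposition. So assume $M_{\cal L}$ is not (semi)stable, let ${\cal S}\subset M_{\cal L}$ be a maximal destabilizing saturated subbundle, and form the associated data $W\hookrightarrow V=H^0({\cal L})$, the globally generated bundle $F_{\cal S}$ with $h^0(F_{\cal S}^*)=0$, and ${\cal A}=\det F_{\cal S}$ exactly as in Remark \ref{conticini}, so that $\mu({\cal S})=-\deg{\cal A}/(\dim W-\rk F_{\cal S})\geqslant\mu(M_{\cal L})$.

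The argument then bifurcates on $\rk F_{\cal S}$. If $\rk F_{\cal S}=1$, then $F_{\cal S}={\cal A}$ is a line bundle and the nonzero map $\alpha$ of Remark \ref{conticini} embeds it as a sub-line-bundle ${\cal P}\subseteq{\cal L}$ generated by $W$. The destabilization inequality then reads $\deg{\cal P}/(\dim W-1)\leqslant\deg{\cal L}/(h^0({\cal L})-1)$, which is exactly the failure of linear (semi)stability for the sub-linear-series $(W,{\cal P})\subseteq|{\cal L}|$ in the sense of Definition \ref{ls}; this settles the rank-one case at once, with strictness matching up.

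The substantial case is $\rk F_{\cal S}\geqslant 2$, where I would show that no such ${\cal S}$ can strictly destabilize. First I would convert the Clifford hypothesis into the gonality bound needed downstream: from $\deg{\cal L}-2(h^0({\cal L})-1)\leqslant\cliff(C)$ and $\cliff(C)\leqslant\gamma(C)-2$ one gets $\deg{\cal L}\leqslant 2(h^0({\cal L})-1)+\gamma-2$, and since $(\gamma-2)(h^0({\cal L})-2)\geqslant 0$ (as $\gamma\geqslant 2$ and we may assume $h^0({\cal L})\geqslant 2$), this yields $\deg{\cal L}\leqslant\gamma(h^0({\cal L})-1)$, which is precisely the numerical hypothesis of Lemma \ref{ohi}. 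I would then apply the determinant sequence of Proposition \ref{detsequence} to $F_{\cal S}$ and invoke Lemma \ref{ohi}: it forces $\mu({\cal S})\leqslant\mu(M_{\cal L})$, contradicting strict destabilization and thereby excluding $\rk F_{\cal S}\geqslant 2$ in the semistable case. In the stable case the same lemma yields equality $\mu({\cal S})=\mu(M_{\cal L})$ together with its rigidity conditions, $W=H^0(F_{\cal S})$, $\gamma=\deg{\cal A}/(h^0({\cal A})-1)$, and $\gamma=\deg{\cal L}/(h^0({\cal L})-1)$, which I would read off against the two exceptional configurations of Proposition \ref{linear} (a translate $\omega_C(D)$ of the canonical bundle, or the hyperelliptic locus with $\deg{\cal L}=2(h^0({\cal L})-1)$) to confirm that strict semistability of $M_{\cal L}$ corresponds exactly to the failure of linear stability recorded there.

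I expect the main obstacle to lie in the passage through Proposition \ref{splitglob}: Lemma \ref{ohi} presupposes that the determinant sequence for $F_{\cal S}$ is \emph{exact on global sections}, and this is the delicate input, since it is what licenses both the Castelnuovo--Green estimate $\deg{\cal A}\geqslant\gamma(h^0({\cal A})-1)$ and the rank bound $\rk{\cal S}\leqslant h^0({\cal A})-1$. I would therefore devote the bulk of the work either to verifying exactness on global sections for a general choice of the subspace $T\subset H^0(F_{\cal S})$, exploiting $h^0(F_{\cal S}^*)=0$ and the global generation of $F_{\cal S}$ by $W\subseteq H^0({\cal L})$, or to handling its failure separately, where a non-surjective evaluation only sharpens $\rk{\cal S}<h^0({\cal A})-1$ in our favour—provided the gonality estimate can still be salvaged without Proposition \ref{splitglob}. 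The second genuine difficulty is the borderline bookkeeping in the stable case: reconciling the higher-rank equality configuration produced by Lemma \ref{ohi} with the rank-one exceptional cases of Proposition \ref{linear}, so that the two notions of strictness agree and the equivalence holds on the nose rather than merely up to semistability.
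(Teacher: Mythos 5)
Your architecture matches the paper's proof exactly: the easy direction via Remark \ref{ossLS}, the reduction of the rank-one case of $F_{\cal S}$ to the linear (semi)stability hypothesis, the derivation of $\deg {\cal L}\leqslant \gamma\,(h^0({\cal L})-1)$ from the Clifford hypothesis (the paper's inequality (\ref{elgam}), including the same analysis of its equality cases), and the conclusion via Lemma \ref{ohi} applied to the determinant sequence of Proposition \ref{detsequence}. But there is a genuine gap at precisely the point you flag and then defer: you never prove that the determinant sequence for $F_{\cal S}$ is exact on global sections, \emph{i.e.} that $h^0({\cal A})\leqslant h^0(F_{\cal S})-\rk F_{\cal S}+1$. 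This Claim is the mathematical heart of the theorem --- the paper itself notes in Remark \ref{incompl} that it is exactly where Butler's argument in \cite{Butler} is incomplete --- and your two proposed strategies do not, as stated, close it. In particular, your fallback (``a non-surjective evaluation only sharpens $\rk{\cal S}<h^0({\cal A})-1$ in our favour'') cannot be salvaged on its own: if exactness on global sections fails, Proposition \ref{splitglob} is unavailable, so the gonality bound $\deg{\cal A}\geqslant\gamma\,(h^0({\cal A})-1)$ that powers Lemma \ref{ohi} is lost, and the sharpened rank inequality only yields a contradiction when ${\cal A}$ contributes to the Clifford index.

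The paper closes the gap by a case split on $h^1({\cal A})$ that your sketch should converge to if executed. When $h^1({\cal A})\geqslant 2$, non-surjectivity of $H^0(F_{\cal S})\to H^0({\cal A})$ combined with the destabilization inequality gives $\deg{\cal A}<2(h^0({\cal A})-1)+\textrm{Cliff}(C)$ (this uses $h^0({\cal A})<h^0({\cal L})$, itself extracted from $\deg{\cal A}<\deg{\cal L}$ and the two Clifford bounds), contradicting the fact that ${\cal A}$ contributes to $\textrm{Cliff}(C)$ --- this is the realizable half of your ``in our favour'' intuition. When $h^1({\cal A})\leqslant 1$, however, no Clifford contradiction is available, and the paper instead shows surjectivity always holds: the cokernel of $\varphi\colon H^1(\oo^{\oplus\rk F_{\cal S}-1})\to H^1(F_{\cal S})$ is $H^1({\cal A})$, and a dimension count reduces injectivity of $\varphi$ to the strict inequality $\deg F_{\cal S}<\rk{\cal S}+g$, which follows from $\mu({\cal S})\geqslant\mu(M_{\cal L})$ together with the Riemann--Roch identity $\deg{\cal L}/(h^0({\cal L})-1)=1+(g-h^1({\cal L}))/\rk M_{\cal L}$ --- note this step uses completeness $V=H^0({\cal L})$ essentially, which is why the theorem does not extend verbatim to the non-complete case (compare Lemmas \ref{lem1}--\ref{lem3}). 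A secondary, minor point: your strictness bookkeeping routes the equality analysis through the exceptional cases of Proposition \ref{linear}, but that proposition is an input to Theorem \ref{paranjape}, not to this equivalence; what the proof actually needs is the list of equality cases of (\ref{elgam}) (hyperelliptic with $\deg{\cal L}=2(h^0({\cal L})-1)$, or $h^0({\cal L})=2$ where $\rk M_{\cal L}=1$ and the statement is vacuous), and the case ${\cal L}\cong\omega_C(D)$ never arises here.
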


\begin{proof}

Clearly $M_{\cal L}$  (semi)stable implies 
${\cal L}$ is linearly (semi)stable. Let us prove the other implication,
thus suppose $\cal L$ linearly (semi)stable.

By contradiction let ${\cal S}$ be a maximal stable destibilizing subbundle of $M_{{\cal L}}$,
\emph{i.e.} ${\cal S}$ stable, $\mu ({\cal S}) \geqslant \mu (M_{{\cal L}})$ maximal 
($>$ for semistability), 
and $\rk {\cal S} < \rk M_{{\cal L}}$.
Note that 
\begin{equation}
\label{elgam}
\deg {\cal L}  \leqslant 
\textrm{Cliff} (C) + 2 (h^0({\cal L})-1) 
 \leqslant \gamma -2 + 2 (h^0({\cal L})-1) 
 \leqslant \gamma (h^0({\cal L}) -1),
 \end{equation}
 with equality iff either $\gamma =2$ and $\deg {\cal L} = 2( h^0({\cal L}) -1)$,
or $h^0({\cal L}) = 2$ and $\deg {\cal L} = \gamma = \textrm{Cliff} (C) + 2$.

By the assumption on linear (semi)stability, we have that that $\rk {F_{\cal S}} \geqslant 2$.
We prove the following

{\bf Claim:}  The bundle ${F_{\cal S}}$ admits  a determinant  sequence (\ref{seqdet}) exact on global sections.

\noindent
Then --by (\ref{elgam}) and by (ii) of Remark 
\ref{conticini}-- we can apply Lemma  \ref{ohi}.
So for such ${\cal S}$ and $F_{\cal S}$ we have that ${\cal S}$ cannot destabilize $M_{\cal L}$.
It can  strictly destabilize (\emph{i.e.} $\mu ({\cal S}) = \mu (M_{{\cal L}})$)
only in the case where $\deg {\cal L} = \gamma ( h^0({\cal L}) -1)$.
By the consequences of (\ref{elgam}), this strict destabilization can happen only
if either $\gamma =2$ and $\deg {\cal L} = 2( h^0({\cal L}) -1)$,
or $h^0({\cal L}) = 2$ and $\deg {\cal L} = \gamma = \textrm{Cliff} (C) + 2$.
In the last case we have that $\rk M_{\cal L} =1$. In the first one, we have that $C$ is hyperelliptic
and $\deg {\cal L} = 2( h^0({\cal L}) -1)$;  it is well known that ${\cal L}$ is strictly 
linearly semistable in this case
(the dual of the $g^1_2$ providing a strict destabilization as noted above).
In any case we cannot have a strict destabilization if $\cal L$ is supposed linearly 
\emph{stable}.

To prove the {\bf claim} let us remark that
by Proposition \ref{detsequence} such a short exact sequence exists. What 
we need to show is that it is exact on global sections; this is equivalent to showing 
 that $h^0 ({\cal A}) \leqslant h^0({F_{\cal S}}) - \rk {F_{\cal S}} +1$.

Observe that
$H^0({F_{\cal S}}) \twoheadrightarrow H^0({\cal A})$ 
if and only if 
$H^1(\oo^{\oplus \rk {F_{\cal S}} -1}) \hookrightarrow H^1({F_{\cal S}})$. 
Let us show first that this is numerically possible:
we prove that $g(\rk {F_{\cal S}} -1) = h^1(\oo^{\oplus \rk {F_{\cal S}} -1}) < h^1 ({F_{\cal S}})$ indeed.
In fact
\[
h^1 ({F_{\cal S}}) 
= h^0({F_{\cal S}}) - \deg {F_{\cal S}} + g \cdot \rk {F_{\cal S}} - \rk {F_{\cal S}}.
\]
Hence $h^1({F_{\cal S}}) > g \cdot \rk {F_{\cal S}} -g$ if and only if
$h^0 ({F_{\cal S}}) - \rk {F_{\cal S}} > \deg {F_{\cal S}} -g$.

As 
$h^0 ({F_{\cal S}}) - \rk {F_{\cal S}} \geqslant \rk {\cal S}$,
we can show that $\rk {\cal S} > \deg {F_{\cal S}} -g$,
\emph{i.e.} that 
\[
\frac{\deg {F_{\cal S}}}{ \rk {\cal S}} < 1 + \frac{g}{\rk {\cal S}} .
\]
By hypothesis 
$\mu ({\cal S}) = - \deg {F_{\cal S}} / \rk {\cal S} \geqslant  - \deg {\cal L} / (h^0 ({\cal L}) -1)$,
hence
\[
\frac{\deg {F_{\cal S}} }{ \rk {\cal S} } \leqslant
\frac{ \deg {\cal L} }{ h^0({\cal L}) -1} 
=\frac{ \deg {\cal L} }{ h^1({\cal L}) + \deg {\cal L} -g} 
= 1 + \frac{g - h^1 ({\cal L})}{\rk M_{{\cal L}}} < 1 + \frac{g}{\rk {\cal S}}.
\]

As the cokernel of 
$\varphi \colon H^1(\oo^{\oplus \rk {F_{\cal S}} -1}) \longrightarrow H^1({F_{\cal S}})$
is exactly $H^1({\cal A})$,
and the inequality above is strict,
then if $h^1 ({\cal A}) \leqslant 1$ the map
$\varphi$ is injective as we need, 
and $H^0({F_{\cal S}}) \twoheadrightarrow H^0({\cal A})$.

Let us show that if $h^1 ({\cal A}) \geqslant 2$, then the map
is surjective as well:
in this case we have the inequality
\[
\deg {\cal A} -2(h^0({\cal A}) -1) \geqslant 
\textrm{Cliff} (C)
\geqslant \deg ({\cal L}) -2 (h^0 ({\cal L}) -1).
\]

As $\deg {F_{\cal S}} = \deg {\cal A} < \deg {\cal L}$ (see Remark \ref{conticini}),
then $2(h^0({\cal L}) - h^0({\cal A})) \geqslant \deg {\cal L} - \deg {\cal A} >0$,
hence $h^0({\cal A}) < h^0({\cal L})$.

Remark that by the assumption made on ${\cal S}$, 
\[
\frac{\deg {\cal A}}{\rk {\cal S}} = \frac{-\deg {\cal S}}{\rk {\cal S}} \leqslant \frac{\deg {\cal L}}{h^0 ({\cal L}) -1},
\] 
hence $\rk {\cal S} \geqslant \deg {\cal A} \cdot (h^0({\cal L}) -1) / \deg {\cal L}$.

Now assume that $H^0({F_{\cal S}}) \to H^0 ({\cal A})$ is not surjective,
\emph{i.e.} that 
$h^0 ({\cal A}) > h^0({F_{\cal S}}) - \rk {F_{\cal S}} +1$.

Then we have that 
\(
h^0({\cal A}) -1 > h^0({F_{\cal S}}) - \rk {F_{\cal S}} \geqslant \rk {\cal S} \geqslant \deg {\cal A} \cdot (h^0({\cal L}) -1) / \deg {\cal L}
\),
hence
\[
\deg {\cal A} < \frac{\deg {\cal L}}{h^0({\cal L}) -1} (h^0 ({\cal A}) -1) 
\leqslant
(2 + \frac{\textrm{Cliff} (C)}{h^0({\cal L})-1})(h^0({\cal A})-1) =
\]
\[
= 2(h^0({\cal A})-1) + \textrm{Cliff} (C) \frac{h^0({\cal A})-1}{h^0({\cal L})-1}
\leqslant 2(h^0({\cal A})-1) + \textrm{Cliff} (C) ~,
\]
so $2(h^0({\cal A})-1) + \textrm{Cliff} (C) \leqslant \deg {\cal A} 
< 2(h^0({\cal A})-1) + \textrm{Cliff} (C)$
and we get a contradiction.

\end{proof}

\begin{remark}
\label{incompl}
It is worth noticing that the claim in the proof of Theorem \ref{princthm} above is a point where Butler's argument in \cite{Butler} fails to be complete.
\end{remark}

The consequences of this theorem, as stated in the introduction, follow easily:

\begin{theorem}\label{paranjape}
Let ${\cal L} \in \picard(C)$ be a globally generated line bundle
such that 
\begin{equation}
\deg {{\cal L}} - 2 (h^0({\cal L}) -1 )  \leqslant \textrm{Cliff} (C).
\end{equation}
Then $M_{\cal L}$ is semistable, and it is strictly semistable only in one of the following cases

(i)
${\cal L} \cong \omega_C (D)$ with $D$ an effective divisor of degree 2,

(ii)
$C$ is hyperelliptic and $\deg {{\cal L}}= 2 (h^0({\cal L}) -1)$.
\end{theorem}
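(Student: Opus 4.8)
The plan is to deduce this statement immediately from the equivalence established in Theorem~\ref{princthm} together with the linear (semi)stability analysis of Proposition~\ref{linear}, both of which are proved under precisely the hypothesis $\deg {\cal L} - 2(h^0({\cal L})-1) \leqslant \textrm{Cliff}(C)$ in the complete case $V = H^0({\cal L})$. Since $M_{\cal L} = M_{H^0({\cal L}),{\cal L}}$, no further setup is required, and the argument is essentially a matter of reading the two earlier results together.

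First I would record the equivalence supplied by Theorem~\ref{princthm}: under the hypothesis, ${\cal L}$ is linearly semistable if and only if $M_{\cal L}$ is semistable, and ${\cal L}$ is linearly stable if and only if $M_{\cal L}$ is stable. Reading these two biconditionals simultaneously yields the key translation: $M_{\cal L}$ is \emph{strictly} semistable (that is, semistable but not stable) if and only if $(C,{\cal L})$ is linearly semistable but not linearly stable.

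Next I would invoke Proposition~\ref{linear}. It asserts that, under the hypothesis, ${\cal L}$ is always linearly semistable; hence $M_{\cal L}$ is semistable, giving the first assertion. It further asserts that ${\cal L}$ fails to be linearly stable exactly when (i) ${\cal L} \cong \omega_C(D)$ for an effective divisor $D$ of degree $2$, or (ii) $C$ is hyperelliptic and $\deg {\cal L} = 2(h^0({\cal L})-1)$. Feeding this into the translation of the previous paragraph, $M_{\cal L}$ is strictly semistable precisely in cases (i) and (ii), which is the claim.

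The argument has essentially no obstacle of its own, since all the substance resides in Theorem~\ref{princthm} and Proposition~\ref{linear}. The one point I would take care to check is that in the two exceptional cases the semistability is genuinely strict rather than merely potentially non-strict: Proposition~\ref{linear} handles this by exhibiting an explicit complete sub-linear-series realizing equality in the linear-stability inequality (for instance, in the hyperelliptic case the dual of the $g^1_2$ maps into $M_{\cal L}$ and destabilizes it), so that ${\cal L}$ is truly strictly linearly semistable, and the equivalence of Theorem~\ref{princthm} then forces $M_{\cal L}$ to be strictly semistable in exactly those cases.
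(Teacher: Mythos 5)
Your proposal is correct and coincides with the paper's own proof, which consists of the single line that the theorem ``follows immediately from Theorem~\ref{princthm} and Proposition~\ref{linear}''---exactly the two ingredients you combine, read in the same way. The only nuance worth noting is that the theorem asserts strict semistability occurs \emph{only} in cases (i) and (ii), so your closing claim that it occurs ``precisely'' in those cases is marginally stronger than what is stated (and for case (i) Proposition~\ref{linear} does not exhibit an explicit destabilizing series), but the implication actually needed follows just as you argue.
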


\begin{proof}
Follows immediately from Theorem
\ref{princthm} and Proposition \ref{linear}.

\end{proof}

\begin{corollary}\label{spiego1}
Let ${\cal L}$ be a  globally generated line bundle over $C$ with 
$$\deg {{\cal L}} \geqslant 2g - \cliff (C).$$
Then the vector bundle $M_{\cal L}$ is semistable. 
It is stable unless $(i)$ or $(ii)$ hold.
\end{corollary}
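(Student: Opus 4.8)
The plan is to derive Corollary \ref{spiego1} directly from Theorem \ref{paranjape} by showing that the degree hypothesis $\deg \mathcal{L} \geqslant 2g - \cliff(C)$ implies the Clifford-index hypothesis $\deg \mathcal{L} - 2(h^0(\mathcal{L}) - 1) \leqslant \cliff(C)$ of that theorem. The only tool I need is Riemann--Roch, which gives $h^0(\mathcal{L}) - h^1(\mathcal{L}) = \deg \mathcal{L} + 1 - g$, equivalently $h^0(\mathcal{L}) - 1 = \deg \mathcal{L} - g + h^1(\mathcal{L})$.

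First I would substitute this into the quantity controlled in Theorem \ref{paranjape}. Since $h^0(\mathcal{L}) - 1 = \deg \mathcal{L} - g + h^1(\mathcal{L})$, we have
\[
\deg \mathcal{L} - 2(h^0(\mathcal{L}) - 1) = \deg \mathcal{L} - 2(\deg \mathcal{L} - g + h^1(\mathcal{L})) = 2g - \deg \mathcal{L} - 2 h^1(\mathcal{L}).
\]
The degree hypothesis $\deg \mathcal{L} \geqslant 2g - \cliff(C)$ rearranges to $2g - \deg \mathcal{L} \leqslant \cliff(C)$, and since $h^1(\mathcal{L}) \geqslant 0$ we obtain
\[
\deg \mathcal{L} - 2(h^0(\mathcal{L}) - 1) = 2g - \deg \mathcal{L} - 2 h^1(\mathcal{L}) \leqslant 2g - \deg \mathcal{L} \leqslant \cliff(C).
\]
Thus the hypothesis of Theorem \ref{paranjape} is satisfied, and that theorem yields semistability of $M_{\mathcal{L}}$ together with the conclusion that strict semistability forces one of the exceptional cases $(i)$ or $(ii)$; hence $M_{\mathcal{L}}$ is stable unless $(i)$ or $(ii)$ holds, as claimed.

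There is really no substantive obstacle here: the corollary is a formal consequence of the theorem once Riemann--Roch is applied, and the exceptional cases transfer verbatim. The only point requiring a word of care is the passage from the degree condition to the Clifford-index condition, where one must notice that the extra term $-2h^1(\mathcal{L})$ can only help, so that the one-sided degree bound is genuinely sufficient (and in fact weaker in the presence of higher $h^1$). I would therefore keep the proof to the short Riemann--Roch computation above, invoke Theorem \ref{paranjape}, and record that the list of exceptional strictly semistable cases is inherited unchanged.
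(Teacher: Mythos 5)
Your proof is correct and follows essentially the same route as the paper: both reduce the degree hypothesis $\deg {\cal L} \geqslant 2g - \mathrm{Cliff}(C)$ to the Clifford-index hypothesis of Theorem \ref{paranjape} via Riemann--Roch (the paper uses the equivalent inequality $h^0({\cal L}) - 1 \geqslant \deg {\cal L} - g$, which is just your identity with $h^1({\cal L}) \geqslant 0$ dropped) and then invoke that theorem, inheriting the exceptional cases $(i)$ and $(ii)$ unchanged. Nothing is missing.
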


\begin{proof}

Observe that if  ${\cal L}$ is a  globally generated line bundle over $C$ with 
$\deg {{\cal L}} \geqslant 2g - \cliff (C),$ then
$\cliff (C) \geqslant 2g - \deg {{\cal L}}$, so
\[
\cliff C + 2 (h^0({\cal L}) -1) \geqslant 
\cliff C + 2 (\deg{\cal L} -g) \geqslant 
\deg {\cal L} ~,
\]
then use Theorem \ref{paranjape}.

\end{proof}

\begin{corollary}\label{spiego2}
Let ${\cal L}$ be any line bundle that computes the Clifford index of $C$. Then $M_{\cal L}$ is semistable; it is stable unless $C$ is hyperelliptic.
\end{corollary}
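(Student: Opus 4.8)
The plan is to reduce the statement directly to Theorem \ref{paranjape}, which already packages the relevant stability dichotomy. First I would unwind the hypothesis: saying that ${\cal L}$ \emph{computes} the Clifford index means precisely that it lies in the admissible set of the defining minimum, i.e. $h^0({\cal L})\geqslant 2$ and $h^1({\cal L})\geqslant 2$, and that it attains the minimum,
$$
\deg {\cal L}-2(h^0({\cal L})-1)=\cliff(C).
$$
In particular the numerical hypothesis $\deg {\cal L}-2(h^0({\cal L})-1)\leqslant\cliff(C)$ of Theorem \ref{paranjape} holds (with equality). The one ingredient that is not immediate, and which is needed both to form $M_{\cal L}$ and to apply that theorem, is that ${\cal L}$ be globally generated.

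So the first genuine step is to verify that a line bundle computing the Clifford index is automatically base-point free. I would argue by contradiction: suppose ${\cal L}$ has a nonzero base divisor $B>0$, and set ${\cal L}':={\cal L}(-B)$. Since every section of ${\cal L}$ vanishes along $B$, the restriction sequence $0\to {\cal L}'\to {\cal L}\to {\cal L}|_B\to 0$ gives $h^0({\cal L}')=h^0({\cal L})$ together with $h^1({\cal L}')=h^1({\cal L})+\deg B\geqslant 2$; thus ${\cal L}'$ is still admissible for the Clifford index. But $\deg {\cal L}'<\deg {\cal L}$ then forces $\deg {\cal L}'-2(h^0({\cal L}')-1)<\cliff(C)$, contradicting minimality. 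Hence ${\cal L}$ is globally generated, and Theorem \ref{paranjape} applies, yielding that $M_{\cal L}$ is semistable and strictly semistable only in the two exceptional cases $(i)$ and $(ii)$ listed there.

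It then remains to rule out these exceptional cases when $C$ is not hyperelliptic. Case $(ii)$ is immediate, since it explicitly requires $C$ hyperelliptic. For case $(i)$, I would note that if ${\cal L}\cong\omega_C(D)$ with $D$ effective of degree $2$, then $h^1({\cal L})=h^0(\oo_C(-D))=0$, which contradicts $h^1({\cal L})\geqslant 2$; hence a line bundle computing the Clifford index can never be of this form. Consequently, for non-hyperelliptic $C$ neither exceptional case can occur, and $M_{\cal L}$ is stable. The only non-formal point in the whole argument is the base-point-freeness reduction of the second paragraph, since it is the bridge that lets us invoke Theorem \ref{paranjape} despite the hypothesis not mentioning global generation; the remaining work is simply matching against the explicit list of exceptions.
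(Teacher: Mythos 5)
Your proposal is correct and takes essentially the same route as the paper, whose entire proof consists of invoking Theorem \ref{paranjape} after \emph{recalling} that any line bundle computing the Clifford index is globally generated. You merely fill in the details the paper leaves to the reader: a correct proof of base-point-freeness via the base divisor (Riemann--Roch gives $h^1({\cal L}(-B))=h^1({\cal L})+\deg B$, contradicting minimality), and the explicit exclusion of exceptional case $(i)$ using $h^1({\cal L})\geqslant 2$ and of case $(ii)$ for non-hyperelliptic $C$.
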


\begin{proof} Follows from Theorem \ref{paranjape}, recalling that any line bundle computing the Clifford index is globally generated.

\end{proof}

\section{The non-complete case}
\label{non-complete}

The aim of this section is to extend the methods described above, when possible, to the non-complete case.

The following conjecture is the most natural direct generalization of Theorem
\ref{princthm} to the non-complete case. Note that  it is weaker than Conjecture \ref{cong} below.
We will not prove it in full generality, but it still holds in many cases.

\begin{conjecture}
\label{weakconj}

Let $(C,{\cal L}, V)$ be a triple. 
If $\deg {\cal L} -  2  (\dim V-1) \leqslant \textrm{Cliff} (C)$,
then linear (semi)stability is equivalent to (semi)stability 
of $M_{V,{\cal L}}$.

\end{conjecture}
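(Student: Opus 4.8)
The plan is to run the argument of Theorem \ref{princthm} verbatim, with the generating subspace $V$ in place of $H^0({\cal L})$ throughout, and to isolate the single place where passing to a non-complete series degrades the estimates. The direction that (semi)stability of $M_{V,{\cal L}}$ forces linear (semi)stability is already recorded in the implications (\ref{implicazioni}). For the converse, suppose $(C,{\cal L},V)$ is linearly (semi)stable and let ${\cal S}\subset M_{V,{\cal L}}$ be a maximal stable destabilizing subbundle, with the associated $W\hookrightarrow V$, $F_{\cal S}$, and ${\cal A}=\det F_{\cal S}$ as in Remark \ref{conticini}. As in the complete case, the Clifford hypothesis together with $\cliff(C)\leqslant\gamma-2$ gives $\deg {\cal L}\leqslant\gamma(\dim V-1)$, while Remark \ref{ossLS} combined with linear stability forces $\rk F_{\cal S}\geqslant 2$. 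Lemma \ref{ohi} then closes the argument, \emph{provided} one establishes the following claim: the bundle $F_{\cal S}$ carries a determinant sequence (\ref{seqdet}) that is exact on global sections, i.e.\ $H^0(F_{\cal S})\twoheadrightarrow H^0({\cal A})$.

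As in Theorem \ref{princthm}, this surjectivity is equivalent to injectivity of the map $\varphi\colon H^1(\oo^{\oplus \rk F_{\cal S}-1})\hookrightarrow H^1(F_{\cal S})$, whose cokernel is $H^1({\cal A})$. I would prove it in two steps. The \emph{numerical step}: show $g(\rk F_{\cal S}-1)<h^1(F_{\cal S})$, which by Riemann-Roch on $F_{\cal S}$ is equivalent to $\deg F_{\cal S}/\rk{\cal S}<1+g/\rk{\cal S}$, using $h^0(F_{\cal S})-\rk F_{\cal S}\geqslant\rk{\cal S}$. The \emph{Clifford step}: when $h^1({\cal A})\leqslant 1$ injectivity of $\varphi$ follows from the strict numerical inequality, while when $h^1({\cal A})\geqslant 2$ the bundle ${\cal A}$ contributes to $\cliff(C)$ and, assuming $\varphi$ not surjective, one reaches the contradiction $2(h^0({\cal A})-1)+\cliff(C)\leqslant\deg{\cal A}<2(h^0({\cal A})-1)+\cliff(C)$ exactly as before.

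The non-completeness enters only in the numerical step. In the complete case one uses the Riemann-Roch identity $h^0({\cal L})-1=\deg{\cal L}-g+h^1({\cal L})$ to write $\deg{\cal L}/(\dim V-1)=1+(g-h^1({\cal L}))/\rk M_{\cal L}$; for $V\subsetneq H^0({\cal L})$ this becomes $\dim V-1=\deg{\cal L}-g+h^1({\cal L})-c$ with $c=\mathrm{codim}_{H^0({\cal L})}V$, so that $\deg{\cal L}/(\dim V-1)=1+(g-h^1({\cal L})+c)/(\dim V-1)$. The extra summand $c/(\dim V-1)$ is the whole difficulty: the numerical inequality holds for free when $c\leqslant h^1({\cal L})$, and more delicately when $c<h^1({\cal L})+g/(\dim V-2)$, which recovers the third case of Theorem \ref{main}. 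The degree and codimension restrictions in the remaining cases of Theorem \ref{main} are tailored to keep both this inequality and the auxiliary bound $h^0({\cal A})\leqslant\dim V$ required in the Clifford step under control.

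The hard part is exactly this codimension term. For an arbitrary triple there is no a priori bound on $c$, and nothing forbids $\rk{\cal S}$ from being as large as $\dim V-2$, in which case $(g-h^1({\cal L})+c)\,\rk{\cal S}$ can exceed $g(\dim V-1)$ and the numerical step collapses; the same enlargement of the slope $\mu(M_{V,{\cal L}})$ also threatens the control $h^0({\cal A})\leqslant\dim V$ used to finish the $h^1({\cal A})\geqslant 2$ branch. To settle the conjecture in full one would need either a lower bound on $h^1(F_{\cal S})$ independent of the codimension, or a genuinely different treatment of the case where the determinant sequence fails to be exact on global sections. In view of the linearly stable line bundles with non-semistable dual span bundle produced in Section \ref{sezserie2}, I expect the honest resolution to rest on a finer structural analysis of the destabilizing $F_{\cal S}$ under the Clifford hypothesis, rather than on the purely numerical estimates that suffice when $V=H^0({\cal L})$.
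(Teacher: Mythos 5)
This statement is a conjecture that the paper itself leaves open, and your proposal matches the paper's treatment essentially verbatim: you run the argument of Theorem \ref{princthm} with $V$ in place of $H^0({\cal L})$, reduce everything to exactness on global sections of the determinant sequence for $F_{\cal S}$ so as to apply Lemma \ref{ohi}, split on $h^1({\cal A})$, and correctly identify the codimension term $c/(\dim V-1)$ in the numerical step as the precise obstruction, thereby recovering exactly the four cases of Theorem \ref{noncompl} (the paper's Lemmas \ref{lem1}, \ref{lem2}, \ref{lem3}). One small correction: in the $h^1({\cal A})\geqslant 2$ branch the bound $h^0({\cal A})<\dim V$ follows from the Clifford hypothesis together with $\deg {\cal A}<\deg {\cal L}$ alone (Lemma \ref{lem1} needs no auxiliary assumption on $c$), so only the $h^1({\cal A})\leqslant 1$ branch is threatened by large codimension; note also that the examples of Section \ref{sezserie2} satisfy $\deg {\cal L}>\gamma(\dim V-1)$ and hence violate the conjecture's hypothesis, so they refute the unconditional implication but do not bear directly on the conjecture itself.
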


\begin{remark}

The inequality 
$\deg {\cal L}\leqslant  \textrm{Cliff} (C) + 2  (\dim V-1)$
holds if and only if
\[
\textrm{codim}_{H^0(L)} V \leqslant \frac{\textrm{Cliff} (C) - 
(\deg {\cal L} -2(h^0({\cal L})-1))}{2} ~.
\]

\end{remark}

The results of this and the previous section can be summarized in the following
(equivalent to Theorem \ref{main})

\begin{theorem}
\label{noncompl}

Conjecture \ref{weakconj} holds in the following cases:

\begin{enumerate}

\item ${H^0({\cal L})} = V $ (complete case);

\item $\deg {\cal L}  \leqslant 2g - \textrm{Cliff} (C) +1$;

\item  $\mathrm{codim}_{H^0({\cal L})} V < h^1({\cal L})+g/(\dim V-2)$;
 
 \item $\deg {\cal L} \geqslant 2g$, and $ \textrm{codim}_{H^0({\cal L})} V \leqslant (\deg {\cal L} -2g)/2 $.

\end{enumerate}

\end{theorem}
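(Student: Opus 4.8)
The plan is to run, for each of the four cases, the same machine as in the proof of Theorem \ref{princthm}, and to isolate the single extra input that each hypothesis supplies. Assume $(C,\mathcal L,V)$ is linearly (semi)stable (the reverse implication is the easy one in (\ref{implicazioni})), let $\mathcal S\subset M_{V,\mathcal L}$ be a maximal stable destabilizing subbundle, and form $W\hookrightarrow V$, $F_{\mathcal S}$ and $\mathcal A=\det F_{\mathcal S}$ as in Remark \ref{conticini}. By Remark \ref{ossLS} linear stability forces $\rk F_{\mathcal S}\geqslant 2$, and the hypothesis $\deg\mathcal L-2(\dim V-1)\leqslant\cliff(C)\leqslant\gamma-2$ together with $\dim V\geqslant 2$ gives $\deg\mathcal L\leqslant\gamma(\dim V-1)$ exactly as in (\ref{elgam}). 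Thus everything reduces to the same claim as before: that $F_{\mathcal S}$ carries a determinant sequence (\ref{seqdet}) exact on global sections, equivalently $H^0(F_{\mathcal S})\twoheadrightarrow H^0(\mathcal A)$, equivalently the purely numerical condition $h^0(\mathcal A)\leqslant h^0(F_{\mathcal S})-\rk F_{\mathcal S}+1$. Once this is known, Lemma \ref{ohi} (applicable because $\deg\mathcal L\leqslant\gamma(\dim V-1)$) yields $\mu(\mathcal S)\leqslant\mu(M_{V,\mathcal L})$, and its equality clause identifies the strictly semistable exceptions, finishing the argument as in Theorem \ref{princthm}.

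It remains to establish the claim, and here I would split on $h^1(\mathcal A)$. The branch $h^1(\mathcal A)\geqslant 2$ goes through uniformly in all four cases: then $\mathcal A$ contributes to the Clifford index, so $\deg\mathcal A-2(h^0(\mathcal A)-1)\geqslant\cliff(C)\geqslant\deg\mathcal L-2(\dim V-1)$; combined with $\deg\mathcal A<\deg\mathcal L$ (Remark \ref{conticini}) this first gives $h^0(\mathcal A)<\dim V$, and then the same contradiction computation as in Theorem \ref{princthm} — with $\dim V-1$ in place of $h^0(\mathcal L)-1$ — rules out failure of the claim. The branch $h^1(\mathcal A)\leqslant 1$ is where the hypotheses enter. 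Using $h^0(F_{\mathcal S})-\rk F_{\mathcal S}\geqslant\rk\mathcal S$ (since $H^0(\mathcal S)=0$) and $h^0(\mathcal A)=\deg\mathcal A+1-g+h^1(\mathcal A)$, the claim reduces to the numerical inequality $\rk\mathcal S>\deg F_{\mathcal S}-g$, i.e. $h^1(F_{\mathcal S})>g(\rk F_{\mathcal S}-1)$. Feeding in the destabilization bound $\deg F_{\mathcal S}\leqslant\tfrac{\deg\mathcal L}{\dim V-1}\rk\mathcal S$ and $\rk\mathcal S\leqslant\dim V-2$, this is equivalent to $\deg\mathcal L-(\dim V-1)<g\,\tfrac{\dim V-1}{\dim V-2}$. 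Case (iii) is precisely this inequality rewritten, since $\deg\mathcal L-(\dim V-1)=g-h^1(\mathcal L)+\mathrm{codim}\,V$; and case (ii) implies it, because adding the two degree bounds $\deg\mathcal L\leqslant 2(\dim V-1)+\cliff(C)$ and $\deg\mathcal L\leqslant 2g-\cliff(C)+1$ forces $\deg\mathcal L-(\dim V-1)\leqslant g$. So in cases (ii) and (iii) the claim holds and we are done.

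The remaining case (iv) is the main obstacle, and I expect it to demand a genuinely different argument. When $\deg\mathcal L\geqslant 2g$ one has $h^1(\mathcal L)=0$ and $\deg\mathcal L-(\dim V-1)=g+\mathrm{codim}\,V$, so for codimension as large as $(\deg\mathcal L-2g)/2$ the numerical inequality above can fail: the crude bound $h^0(F_{\mathcal S})\geqslant\dim W$ is then too weak, and one cannot exclude a destabilizing $\mathcal S$ with $h^1(\mathcal A)\leqslant 1$ by slope bookkeeping alone. The plan here is to prove the claim directly, by showing that $H^0(F_{\mathcal S})\to H^0(\mathcal A)$ is surjective for the general determinant sequence — equivalently, by Serre duality, that $H^0(F_{\mathcal S}^*\otimes\om_C)\to H^0(\om_C)^{\oplus(\rk F_{\mathcal S}-1)}$ is surjective — using the positivity forced by $\deg\mathcal L\geqslant 2g$ and the smallness of $\mathrm{codim}\,V$ to guarantee that $F_{\mathcal S}$ attains the maximal number of sections $h^0(F_{\mathcal S})=h^0(\mathcal A)+\rk F_{\mathcal S}-1$. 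Pinning down $h^0(F_{\mathcal S})$ exactly, rather than merely bounding it below by $\dim W$, is the delicate point where the large-degree/small-codimension hypothesis of (iv) must really be spent; I would expect a Green/Castelnuovo-type vanishing, in the spirit of Proposition \ref{splitglob}, to be the right instrument.
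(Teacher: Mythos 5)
Your treatment of cases (\emph{i})--(\emph{iii}) is correct and coincides with the paper's own argument: the branch $h^1({\cal A})\geqslant 2$ is Lemma \ref{lem1} verbatim, and your reduction of the branch $h^1({\cal A})\leqslant 1$ to the numerical claim $\rk {\cal S}>\deg F_{\cal S}-g$ (using $h^0(F_{\cal S})-\rk F_{\cal S}\geqslant \rk{\cal S}$ and Riemann--Roch for ${\cal A}$) is exactly the mechanism of Lemmas \ref{lem2} and \ref{lem3}. Your rewriting of (\emph{iii}) as $c<h^1({\cal L})+g/(\dim V-2)$ via $\deg{\cal L}-(\dim V-1)=g-h^1({\cal L})+c$ matches the paper's Lemma \ref{lem3}, and your derivation of (\emph{ii}) by adding the two degree bounds to get $\deg{\cal L}-(\dim V-1)\leqslant g$ is a mildly streamlined (and correct) repackaging of the paper's case split on $\rk{\cal S}$ versus $g-\cliff(C)$ in Lemma \ref{lem2}.

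Case (\emph{iv}), however, is a genuine gap: you do not prove it, you only announce a plan, and the plan heads in a direction for which you supply no mechanism. You correctly diagnose that when $h^1({\cal L})=0$ and $\mathrm{codim}_{H^0({\cal L})}V$ is of order $(\deg{\cal L}-2g)/2$, the slope bookkeeping behind the claim $\rk{\cal S}>\deg F_{\cal S}-g$ can fail, but your proposed remedy --- pinning down $h^0(F_{\cal S})=h^0({\cal A})+\rk F_{\cal S}-1$ exactly by a Green/Castelnuovo-type vanishing ``in the spirit of Proposition \ref{splitglob}'' --- is speculative and, as stated, points the wrong way: in Proposition \ref{splitglob} exactness on global sections is the \emph{hypothesis} and a degree bound is the conclusion, so it cannot serve to establish the exactness you need, and nothing in your sketch explains how $\deg{\cal L}\geqslant 2g$ would control $h^0(F_{\cal S})$ beyond the crude bound $\dim W$. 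The paper does something entirely different here: it disposes of (\emph{iv}) in one line by invoking Lemma 2.2 of \cite{stabtrans}, which covers precisely the regime $\deg{\cal L}\geqslant 2g$, $\mathrm{codim}_{H^0({\cal L})}V\leqslant(\deg{\cal L}-2g)/2$, and yields that a vector-bundle destabilization of $M_{V,{\cal L}}$ forces a linear destabilization by an argument external to the determinant-sequence machinery of Sections \ref{cliff}--\ref{non-complete}; no exactness on global sections and no Lemma \ref{ohi} are needed for this case. To close your proof you should either import that external lemma, as the paper does, or produce a complete substitute argument --- the Castelnuovo-type vanishing you gesture at is not one.
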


In all the following result we make this assumption. 
Let $(C,{\cal L}, V)$ be a triple verifying
$\deg {\cal L} -  2  (\dim V-1) \leqslant \textrm{Cliff} (C)$,
let  ${\cal S}\subset M_{V,{\cal L}} $ 
be a proper subbundle such that
 $ \mu ({\cal S}) \geqslant \mu (M_{V,{\cal L}}) $, let $F_{\cal S}$ and ${\cal A}$ be as in Lemma \ref{ohi}.

In order to prove Theorem \ref{noncompl},
we proceed as for Theorem \ref{princthm},
and show that within these numerical hypothesis we can apply 
Lemma \ref{ohi}. That is, we show that for a possible destabilization given by
$$
\xymatrix{
0\ar[r]&{\cal S}\ar[r]\ar@{^{(}->}[d]&W\otimes \oo_C\ar[r] \ar@{^{(}->}[d] &F_{\cal S}\ar[r]\ar[d]^\alpha& 0\\
0\ar[r]&M_{V,{\cal L}}\ar[r]&V\otimes \oo_C\ar[r]  &{\cal L}\ar[r]& 0}
$$
the bundle $F_{\cal S}$ fits into a short exact sequence
$$
0 \longrightarrow  \oplus^{\rk F_{\cal S} -1} \oo_C \longrightarrow   F_{\cal S}  \longrightarrow   {\cal A}  \longrightarrow   0 
$$
which is exact on global sections.

\begin{lemma}\label{lem1}
If $h^1({\cal A}) \geqslant 2$ then the exact sequence 
$$
0 \longrightarrow  \oplus^{\rk F_{\cal S} -1} \oo_C \longrightarrow   F_{\cal S}  \longrightarrow   {\cal A}  \longrightarrow   0 
$$
is exact on global sections.

\end{lemma}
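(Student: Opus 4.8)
The plan is to reproduce, in the non-complete setting, the concluding argument of the proof of Theorem \ref{princthm}, arguing by contradiction. Recall that the sequence is exact on global sections precisely when $H^0(F_{\cal S})\to H^0({\cal A})$ is surjective, i.e. when $h^0({\cal A})\leqslant h^0(F_{\cal S})-\rk F_{\cal S}+1$; so I would assume instead that $h^0({\cal A})>h^0(F_{\cal S})-\rk F_{\cal S}+1$ and extract a numerical contradiction. We may assume $\rk F_{\cal S}\geqslant 2$, since otherwise $F_{\cal S}={\cal A}$ and the statement is trivial.

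First I would collect two facts about ${\cal A}=\det F_{\cal S}$. Since $F_{\cal S}$ is globally generated with $h^0(F_{\cal S}^*)=0$, its determinant is globally generated of strictly positive degree, hence $h^0({\cal A})\geqslant 2$; together with the hypothesis $h^1({\cal A})\geqslant 2$ this shows that ${\cal A}$ contributes to the Clifford index, so
\[
\deg {\cal A}-2(h^0({\cal A})-1)\geqslant \textrm{Cliff}(C).
\]
Combining this with the standing assumption $\deg{\cal L}-2(\dim V-1)\leqslant \textrm{Cliff}(C)$ and with $\deg {\cal A}=\deg F_{\cal S}<\deg{\cal L}$ (Remark \ref{conticini}), a one-line rearrangement yields the key bound $h^0({\cal A})<\dim V$. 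This is exactly the place where the non-complete hypothesis (stated against $\dim V$ rather than $h^0({\cal L})$) replaces the estimate $h^0({\cal A})<h^0({\cal L})$ used in the complete case.

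With this bound in hand the rest is the same bookkeeping as before. The non-surjectivity assumption gives $h^0({\cal A})-1>h^0(F_{\cal S})-\rk F_{\cal S}\geqslant \rk {\cal S}$, the last inequality because $W\hookrightarrow H^0(F_{\cal S})$ forces $\dim W\leqslant h^0(F_{\cal S})$. The destabilizing inequality $\mu({\cal S})\geqslant\mu(M_{V,{\cal L}})$ reads $\deg{\cal A}/\rk{\cal S}\leqslant \deg{\cal L}/(\dim V-1)$, whence $\rk{\cal S}\geqslant \deg{\cal A}\,(\dim V-1)/\deg{\cal L}$. Chaining these together and using $\deg{\cal L}\leqslant 2(\dim V-1)+\textrm{Cliff}(C)$ gives
\[
\deg{\cal A}<2(h^0({\cal A})-1)+\textrm{Cliff}(C)\,\frac{h^0({\cal A})-1}{\dim V-1},
\]
and since $h^0({\cal A})<\dim V$ the fraction is $<1$, so the right-hand side is at most $2(h^0({\cal A})-1)+\textrm{Cliff}(C)$, contradicting the Clifford contribution displayed above.

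I expect the only genuinely new step to be the derivation of $h^0({\cal A})<\dim V$: in Theorem \ref{princthm} the slope of $M_{V,{\cal L}}$ was compared against $h^0({\cal L})-1$, whereas here it must be compared against $\dim V-1$, and one has to make sure the Clifford-index inequality for ${\cal A}$ is matched against the standing hypothesis for $V$ rather than for the full $H^0({\cal L})$. The case $\textrm{Cliff}(C)=0$ needs no separate treatment, since then the strict inequality on $\deg{\cal A}$ already contradicts Clifford's theorem for ${\cal A}$.
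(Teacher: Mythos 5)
Your proof is correct and follows essentially the same route as the paper's own proof of Lemma \ref{lem1}: assume non-surjectivity, use the Clifford contribution of ${\cal A}$ together with $\deg{\cal A}<\deg{\cal L}$ to get $h^0({\cal A})<\dim V$, then chain $h^0({\cal A})-1>\rk{\cal S}\geqslant \deg{\cal A}(\dim V-1)/\deg{\cal L}$ against $\deg{\cal L}\leqslant 2(\dim V-1)+\mathrm{Cliff}(C)$ to reach the same contradiction. Your explicit checks that $h^0({\cal A})\geqslant 2$ and that $\mathrm{Cliff}(C)=0$ needs no separate treatment are minor refinements the paper leaves implicit.
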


\begin{proof}

If $h^1({\cal A}) \geqslant 2$ then
$\deg {\cal A} -2 (h^0({\cal A}) -1) \geqslant \cliff (C) \geqslant \deg {\cal L} -2 (\dim V -1)$, and then we have 
that $2(\dim V - h^0({\cal A})) \geqslant \deg {\cal L} - \deg {\cal A} = \deg {\cal L} - \deg F_{\cal S} >0$.
Furthermore, 
if 
\[
\frac{\deg {\cal A}}{\rk {\cal S}} = \frac{- \deg {\cal S}}{\rk {\cal S}} \leqslant \frac{\deg {\cal L}}{\dim V -1}   
\]
then $\rk {\cal S} \geqslant \deg {\cal A} (\dim V -1) / \deg {\cal L}$.

Then if we had the inequality $h^0({\cal A}) -1 > h^0(F_{\cal S}) - \rk F_{\cal S}$, we would have
$h^0({\cal A}) -1 > \rk {\cal S} \geqslant \deg {\cal A} (\dim V -1) / \deg {\cal L}$,
and then
\[
2(h^0({\cal A})-1) + \cliff (C) \leqslant \deg {\cal A} < \deg {\cal L} \frac{h^0({\cal A})-1}{\dim V -1}
\leqslant
\]
\[
\leqslant 2 (h^0({\cal A}) -1) + \cliff (C) \frac{h^0({\cal A})-1}{\dim V -1}
< 2 (h^0({\cal A}) -1) + \cliff (C),
\]
which is absurd, so we have 
$h^0({\cal A}) \leqslant h^0(F_{\cal S}) - \rk F_{\cal S} + 1$, hence the sequence
$$
0 \longrightarrow  \oplus^{\rk F_{\cal S} -1} \oo_C \longrightarrow   F_{\cal S}  \longrightarrow   {\cal A}  \longrightarrow   0 
$$
is exact on global sections.

\end{proof}

To complete the proof of Theorem \ref{noncompl} we have to treat the case  $h^1({\cal A}) \leqslant 1$ as well.

\begin{lemma}\label{lem2}

Suppose that  $h^1({\cal A})\leqslant 1 $. If we assume that
$\deg {\cal L}  \leqslant 2g - \textrm{Cliff} (C) +1$,
then the sequence
$$
0 \longrightarrow  \oplus^{\rk F_{\cal S} -1} \oo_C \longrightarrow   F_{\cal S}  \longrightarrow   {\cal A}  \longrightarrow   0 
$$
is exact on global sections.

\end{lemma}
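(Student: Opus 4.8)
The plan is to follow the same route as the proof of Theorem \ref{princthm}, the only real change being how the hypotheses are used to secure the decisive numerical inequality. As observed there, exactness on global sections of the displayed sequence is equivalent to surjectivity of $H^0(F_{\cal S}) \to H^0({\cal A})$, and hence to injectivity of the map
\[
\varphi \colon H^1\bigl(\oo_C^{\oplus \rk F_{\cal S} -1}\bigr) \longrightarrow H^1(F_{\cal S})
\]
induced by the sequence. Since $\mathrm{coker}(\varphi) \cong H^1({\cal A})$ and $h^1\bigl(\oo_C^{\oplus \rk F_{\cal S}-1}\bigr) = g(\rk F_{\cal S}-1)$, a dimension count in the long exact sequence gives $\dim\ker\varphi = g(\rk F_{\cal S}-1) + h^1({\cal A}) - h^1(F_{\cal S})$. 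Because we are assuming $h^1({\cal A}) \leqslant 1$, the map $\varphi$ will be injective as soon as we prove the strict inequality $g(\rk F_{\cal S} -1) < h^1(F_{\cal S})$.

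First I would rewrite this target via Riemann--Roch for $F_{\cal S}$, namely $h^1(F_{\cal S}) = h^0(F_{\cal S}) - \deg F_{\cal S} + g\cdot \rk F_{\cal S} - \rk F_{\cal S}$, which turns the inequality into $h^0(F_{\cal S}) - \rk F_{\cal S} > \deg F_{\cal S} - g$. Since $\dim W \leqslant h^0(F_{\cal S})$ and $\rk {\cal S} = \dim W - \rk F_{\cal S}$, one has $h^0(F_{\cal S}) - \rk F_{\cal S} \geqslant \rk {\cal S}$, so it suffices to establish $\deg F_{\cal S} < \rk {\cal S} + g$.

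The heart of the matter, and the place where the hypothesis $\deg {\cal L} \leqslant 2g - \cliff(C) + 1$ enters, is exactly this bound. In the complete case it followed from the identity $h^0({\cal L})-1 = \rk M_{\cal L}$, which is no longer available here; instead I would combine the two standing hypotheses. Writing $n = \dim V - 1 = \rk M_{V,{\cal L}}$, the inequality $\deg {\cal L} - 2n \leqslant \cliff(C)$ together with $\cliff(C) \leqslant 2g - \deg {\cal L} + 1$ yields $2\deg {\cal L} \leqslant 2g + 2n + 1$, whence, by integrality, the clean bound $\deg {\cal L} \leqslant g + n$. On the other hand, the destabilizing hypothesis $\mu({\cal S}) \geqslant \mu(M_{V,{\cal L}})$ reads $\deg F_{\cal S}/\rk {\cal S} \leqslant \deg {\cal L}/n$, using $\deg {\cal S} = -\deg F_{\cal S}$. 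Combining these,
\[
\deg F_{\cal S} \;\leqslant\; \frac{\deg {\cal L}}{n}\,\rk {\cal S} \;\leqslant\; \Bigl(1 + \frac{g}{n}\Bigr)\rk {\cal S} \;=\; \rk {\cal S} + \frac{g\,\rk {\cal S}}{n},
\]
and since ${\cal S}$ is a proper subbundle we have $\rk {\cal S} < n$, so $g\,\rk {\cal S}/n < g$ and the required strict inequality $\deg F_{\cal S} < \rk {\cal S} + g$ follows.

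This produces $g(\rk F_{\cal S}-1) < h^1(F_{\cal S})$, hence $\dim\ker\varphi \leqslant h^1({\cal A}) - 1 \leqslant 0$, so $\varphi$ is injective and the sequence is exact on global sections, as claimed. I expect the genuine obstacle to be precisely the derivation of $\deg {\cal L} \leqslant g+n$: it is the one new input beyond the complete case, and it is what the extra hypothesis $\deg {\cal L} \leqslant 2g - \cliff(C)+1$ is designed to supply, compensating for the loss of the complete-case identity between $h^0({\cal L})-1$ and $\rk M_{V,{\cal L}}$.
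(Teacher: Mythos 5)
Your proof is correct, and its skeleton coincides with the paper's: both arguments reduce exactness on global sections (equivalently, injectivity of $H^1(\oo_C^{\oplus \rk F_{\cal S}-1})\to H^1(F_{\cal S})$, where $h^1({\cal A})\leqslant 1$ absorbs the possible kernel) to the inequality $h^0(F_{\cal S})-\rk F_{\cal S}>\deg {\cal A}-g$, and then, via $h^0(F_{\cal S})-\rk F_{\cal S}\geqslant \rk{\cal S}$ and $\deg F_{\cal S}=\deg{\cal A}$, to the very same Claim $\deg F_{\cal S}<\rk{\cal S}+g$. Where you genuinely diverge is in the proof of that Claim. The paper bounds $\deg F_{\cal S}\leqslant \frac{\rk{\cal S}}{\dim V-1}\deg{\cal L}\leqslant \cliff(C)\frac{\rk{\cal S}}{\dim V-1}+2\rk{\cal S}$ and splits into cases according to whether $\rk{\cal S}\leqslant g-\cliff(C)$; the remaining case is excluded by combining $\deg{\cal L}>\deg F_{\cal S}$ (point (iii) of Remark \ref{conticini}, which rests on ${\cal S}$ being a \emph{maximal} destabilizing subbundle, with $\rk F_{\cal S}\geqslant 2$ for strictness) with the hypothesis $\deg{\cal L}\leqslant 2g-\cliff(C)+1$. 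You instead sum the two standing numerical hypotheses and invoke integrality once to get the clean bound $\deg{\cal L}\leqslant g+n$ with $n=\dim V-1$, and conclude directly from $\deg F_{\cal S}\leqslant (\deg{\cal L}/n)\rk{\cal S}$ and $\rk{\cal S}<n$. This buys two things: it makes explicit exactly where the ``$+1$'' in the hypothesis is consumed (the step $\deg{\cal L}\leqslant g+n+1/2\Rightarrow\deg{\cal L}\leqslant g+n$), and it dispenses with Remark \ref{conticini}(iii) entirely, so that you need neither maximality of ${\cal S}$ nor $\rk F_{\cal S}\geqslant 2$, but only $\mu({\cal S})\geqslant\mu(M_{V,{\cal L}})$ and the fact that a proper saturated subbundle has $\rk{\cal S}<\rk M_{V,{\cal L}}$; in this respect your argument is slightly more self-contained than the paper's, whose case split keeps the role of $\cliff(C)$ visible but leans on more of the surrounding machinery.
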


\begin{proof}

We want to prove that $h^0(F_{\cal S}) - \rk F_{\cal S} + 1\geqslant h^0({\cal A})$.
This is the case if we prove that 
$h^0(F_{\cal S}) - \rk F_{\cal S} > \deg {\cal A} -g$.
As $h^0(F_{\cal S}) - \rk F_{\cal S} \geqslant \rk {\cal S}$,
recalling that $\deg F_{\cal S} = \deg {\cal A}$,
we are done if 
we can prove the following

{\bf Claim:}
$\deg F_{\cal S} < \rk {\cal S} + g$.

\noindent
In fact we have that
\[
\deg F_{\cal S} \leqslant \frac{\rk {\cal S}}{\dim V -1} \deg {\cal L} \leqslant
\frac{\rk {\cal S}}{\dim V -1} (\cliff (C) + 2 (\dim V -1)) =
\]
\[
=\cliff (C) \frac{\rk {\cal S}}{\dim V -1} + 2 \rk {\cal S}
< \cliff (C) + 2 \rk {\cal S}.
\]
So, if $\rk {\cal S} \leqslant g - \cliff (C)$ then the claim is verified.
Let us show that this is the case when 
$\rk {\cal S} > g - \cliff (C)$ as well.
In fact if we had $\deg F_{\cal S} \geqslant \rk {\cal S} + g$ holding together with
$\rk {\cal S} > g - \cliff (C)$, then we would have
$\deg {\cal L} > \deg F_{\cal S} > 2g - \cliff (C)$,
contrary to the assumption.

\end{proof}

\begin{lemma}\label{lem3}

If 
$\mathrm{codim}_{H^0({\cal L})} V < h^1({\cal L})+ g/(\dim V-2)$,
then the sequence
$$
0 \longrightarrow  \oplus^{\rk F_{\cal S} -1} \oo_C \longrightarrow   F_{\cal S}  \longrightarrow   {\cal A}  \longrightarrow   0 
$$
is exact on global sections.

\end{lemma}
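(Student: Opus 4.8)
The plan is to reproduce, in this setting, the argument that settled the cases $h^1({\cal A})\leqslant 1$ in the proof of Theorem \ref{princthm} and in Lemma \ref{lem2}, but feeding in the codimension hypothesis in place of a bound on $\deg{\cal L}$. First I would dispose of the range $h^1({\cal A})\geqslant 2$, which is precisely Lemma \ref{lem1} and needs no further hypothesis; so it suffices to treat $h^1({\cal A})\leqslant 1$. There I would use the usual cohomological reformulation: from the long exact sequence of
\[
0 \longrightarrow \oplus^{\rk F_{\cal S} -1} \oo_C \longrightarrow F_{\cal S} \longrightarrow {\cal A} \longrightarrow 0,
\]
exactness on global sections is equivalent to injectivity of the connecting map $\varphi\colon H^1(\oo_C^{\oplus \rk F_{\cal S}-1})\to H^1(F_{\cal S})$, whose cokernel is $H^1({\cal A})$.

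The core of the proof is the numerical \textbf{Claim} $\deg F_{\cal S}<\rk{\cal S}+g$, exactly as in Lemma \ref{lem2}, but now derived from the codimension hypothesis. I would begin with the destabilizing inequality $\mu({\cal S})\geqslant\mu(M_{V,{\cal L}})$, that is $\deg F_{\cal S}/\rk{\cal S}\leqslant\deg{\cal L}/(\dim V-1)$, and rewrite the right-hand side via Riemann--Roch. Since $\dim V-1=\deg{\cal L}-g+h^1({\cal L})-\mathrm{codim}_{H^0({\cal L})}V$, one has
\[
\frac{\deg{\cal L}}{\dim V-1}=1+\frac{g-h^1({\cal L})+\mathrm{codim}_{H^0({\cal L})}V}{\dim V-1}.
\]
Thus the Claim will follow once I check that this last fraction is $<g/\rk{\cal S}$. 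Since ${\cal S}$ is a proper subbundle of $M_{V,{\cal L}}$, which has rank $\dim V-1$, we have $\rk{\cal S}\leqslant\dim V-2$, hence $g/\rk{\cal S}\geqslant g/(\dim V-2)$; so it is enough to verify $(g-h^1({\cal L})+\mathrm{codim}_{H^0({\cal L})}V)/(\dim V-1)<g/(\dim V-2)$. Cross-multiplying, this reduces exactly to $(\mathrm{codim}_{H^0({\cal L})}V-h^1({\cal L}))(\dim V-2)<g$, which is the hypothesis.

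With the Claim established, the conclusion is formal and identical to the earlier cases. Combining $\deg F_{\cal S}<\rk{\cal S}+g$ with $h^0(F_{\cal S})-\rk F_{\cal S}\geqslant\rk{\cal S}$, Riemann--Roch yields the strict integer inequality $g(\rk F_{\cal S}-1)<h^1(F_{\cal S})$. Hence
\[
\dim\ker\varphi=g(\rk F_{\cal S}-1)-h^1(F_{\cal S})+h^1({\cal A})\leqslant h^1({\cal A})-1\leqslant 0
\]
whenever $h^1({\cal A})\leqslant 1$, so $\varphi$ is injective and the sequence is exact on global sections. I expect the only subtle point to be the bookkeeping of the middle paragraph: the passage from the denominator $\dim V-1$ to $\dim V-2$ is exactly what forces the shape $g/(\dim V-2)$ of the hypothesis, and one must note that the statement is vacuous when $\dim V\leqslant 2$ (there $M_{V,{\cal L}}$ carries no proper subbundle ${\cal S}$ with the associated $\rk F_{\cal S}\geqslant 2$), so that dividing by $\dim V-2$ is harmless.
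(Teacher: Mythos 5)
Your proof is correct and follows essentially the same route as the paper: you reduce to $h^1({\cal A})\leqslant 1$ via Lemma \ref{lem1}, establish the same key claim $\deg F_{\cal S}<\rk{\cal S}+g$ from the destabilizing slope inequality, the Riemann--Roch rewriting $\deg{\cal L}/(\dim V-1)=1+(g+c-h^1({\cal L}))/(\dim V-1)$, and the bound $\rk{\cal S}\leqslant\dim V-2$, exactly as in the paper's proof. The only (harmless) cosmetic difference is the endgame: the paper concludes via the $h^0$-count $h^0(F_{\cal S})-\rk F_{\cal S}+1\geqslant h^0({\cal A})$ borrowed from Lemma \ref{lem2}, whereas you run the equivalent $h^1$/connecting-map dimension count, which is precisely the bookkeeping the paper itself uses in the complete case (Theorem \ref{princthm}).
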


\begin{proof}
The case $h^1({\cal A})\geqslant 2$ is treated in Lemma \ref{lem1}.
Let us assume that $h^1({\cal A})\leqslant 1$.
We proceed as in the proof of Lemma \ref{lem2},
and show the following 

{\bf Claim:}
$\rk {\cal S} > \deg {\cal A} - g$.

\noindent
As shown in Lemma \ref{lem2}, this implies that 
 $h^0(F_{\cal S}) - \rk F_{\cal S} + 1\geqslant h^0({\cal A})$.

To prove the claim, set $c : = \mathrm{codim}_{H^0({\cal L})} V$, and observe that 
\[
\rk {\cal S} > \deg {\cal A} -g 
\iff
\frac{\deg {\cal A}}{\rk {\cal S}} < 1 + \frac{g}{\rk {\cal S}}
\]
and that 
\[
\frac{\deg {\cal A}}{\rk {\cal S}} \leqslant \frac{\deg {\cal L}}{\dim V -1}
= \frac{\deg {\cal L}}{h^1({\cal L}) + \deg {\cal L} -g -c}
= 1 + \frac{g + c - h^1({\cal L})}{\dim V -1} 
 ~.
\]
Now observe that if $c< h^1({\cal L})+ g/(\dim V-2)$, we have, noting that $\rk {\cal S}\leqslant \dim V-2$,
\[
c-h^1({\cal L})< g\left( \frac{\dim V-1}{\rk{\cal S}}-1 \right),
\]
and hence that 
\[
\frac{\deg {\cal A}}{\rk {\cal S}}< 1+\frac{g}{\rk{\cal S}}.
\]

\end{proof}

As for the last point in Theorem \ref{noncompl}, it follows directly from
Lemma 2.2 in 
\cite{stabtrans}.

\begin{proposition}
\label{bs08}

Let $C$ be a curve such that $\cliff (C) \geqslant 4$.
Let $V \subset H^0(\omega_C)$ be a general subspace of codimension 
smaller than or equal to 2,  then $M_{V, \omega_C}$ is semistable.

\end{proposition}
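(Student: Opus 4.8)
The plan is to obtain the proposition as a formal consequence of two facts already available: the linear-semistability statement for general non-complete canonical systems recorded in the Remark after Proposition \ref{linear} (from \cite{barstop}), and the equivalence between linear semistability and semistability of the dual span bundle provided by Theorem \ref{noncompl}. First I would invoke \cite{barstop}: for a \emph{general} subspace $V \subset H^0(\omega_C)$ of codimension $c \leqslant \cliff(C)/2$, the triple $(C, \omega_C, V)$ is linearly semistable. Since the hypothesis $\cliff(C) \geqslant 4$ gives $\cliff(C)/2 \geqslant 2$, every general $V$ of codimension at most $2$ falls under this statement, so $(C, \omega_C, V)$ is linearly semistable. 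It then remains only to transfer this to semistability of $M_{V, \omega_C}$.

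For that transfer I would verify the hypotheses of Theorem \ref{noncompl} with $\mathcal{L} = \omega_C$. Writing $c = \mathrm{codim}_{H^0(\omega_C)} V \leqslant 2$, so that $\dim V = g - c$, and using $\deg \omega_C = 2g - 2$, $h^0(\omega_C) = g$, $h^1(\omega_C) = 1$, one computes
$$\deg \omega_C - 2(\dim V - 1) = (2g - 2) - 2(g - c - 1) = 2c \leqslant 4 \leqslant \cliff(C),$$
which is exactly the global inequality required by the theorem. I would then check that one of the four itemized cases applies. The case $c = 0$ is the complete case (item 1). For $c = 1, 2$ I would use item 3: the standard bound $\cliff(C) \leqslant \lfloor (g-1)/2\rfloor$ makes $\cliff(C) \geqslant 4$ force $g \geqslant 9$, so $\dim V - 2 = g - c - 2 > 0$, and the required inequality $c < h^1(\omega_C) + g/(\dim V - 2) = 1 + g/(g - c - 2)$ is immediate for $c = 1$ and reduces to $1 < g/(g - 4)$ for $c = 2$, which holds since $g > 4$. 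Hence Theorem \ref{noncompl} gives that linear semistability of $(C, \omega_C, V)$ is equivalent to semistability of $M_{V, \omega_C}$, and combining with the previous paragraph finishes the proof.

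I expect the argument to be pure bookkeeping rather than to require a new idea, so the only real obstacle is making the two sets of hypotheses line up under the single assumption $\cliff(C) \geqslant 4$. In particular I would take care to use the correct case of Theorem \ref{noncompl}: case 2 is \emph{unavailable} here, since $\deg \omega_C \leqslant 2g - \cliff(C) + 1$ would force $\cliff(C) \leqslant 3$, contradicting the hypothesis, so it is genuinely case 3 (together with the complete case for $c = 0$) that must be invoked, and its applicability rests on the genus being large, which is guaranteed precisely by $\cliff(C) \geqslant 4$.
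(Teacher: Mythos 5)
Your proof is correct and takes essentially the same route as the paper: linear semistability of $(C,\omega_C,V)$ from \cite{barstop}, transferred to semistability of $M_{V,\omega_C}$ via Theorem \ref{noncompl}, with your explicit verification that case 3 applies (and that case 2 is unavailable when $\cliff(C)\geqslant 4$) being bookkeeping the paper leaves implicit. The only cosmetic difference is that the paper disposes of the complete case $c=0$ by citing \cite{PR} directly, whereas you route it through item 1 of Theorem \ref{noncompl}; both work.
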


\begin{proof}
In the complete case the (semi)stability is well known in the literature, regardless to the Clifford index
\cite{PR}.
In the non-complete case, it has been proven in \cite{barstop} that a general projection from a subspace of dimension smaller than or equal to $\cliff(C)/2$ is linearly stable.
Then, the proof is immediate from Theorem \ref{noncompl}.
\end{proof}

\begin{example}
From Lemma \ref{lem3} we can also construct a stable bundle with slope $-3$ as follows. 
Consider a genus $10$ curve $C$ with general Clifford index $4$. Let $V\subset H^0(\omega_C)$ be a $7-$dimensional subspace.
The assumptions of Lemma \ref{lem3} are thus satisfied, as 
$$\mathrm{codim}_{H^0(\omega_C)} V= 3 <  1+ \frac{5}{2}= h^1(\omega_C)+ \frac{g}{\dim V-2}. $$
If we choose $V$ to be general, we have that linear stability holds for $(C, \omega_C, V)$ by \cite{barstop}, and hence by Lemma \ref{lem3} the sheaf $M_{\omega_C, V}$ is stable.
Its slope is indeed $-\deg{\omega_C}/(\dim V-1)= -3$.
\end{example}


\section{Cohomological stability and the Clifford index}
\label{cohom}

The following definition was introduced by Ein and Lazarsfeld in \cite{EL}.

\begin{definition}\label{cohostab}
Let ${\cal E}$ be a vector bundle on a curve $C$. We say that ${\cal E}$ is {\em cohomologically stable} (resp. {\em cohomologically semistable}) if for any line bundle ${\cal A}$ of degree $a$ and for any integer $t<\rk {\cal E}$ 
$$h^0(\wedge^t{\cal E}\otimes {\cal A}^{-1})=0 \quad \mbox{ whenever  } \quad a\geqslant t \mu({\cal E}) \quad \mbox{(resp. $>$)} $$
\end{definition}

\begin{remark}\label{rapporto}
Cohomological (semi)stability implies  bundle (semi)stability; indeed, given any   proper subbundle ${\cal S}\subset {\cal E}$ of degree $a$ and rank $t$, we have an inclusion $\det {\cal S}\hookrightarrow \wedge^t{\cal E}$, hence a non-zero section of $(\det  {\cal S})^{-1}\otimes  \wedge^t{\cal E}$. 

Moreover, observe that cohomological (semi)stability of ${\cal E}$ is implied by $\wedge^t{\cal E}$ being (semi)stable for any integer $t$; hence cohomological semistability is equivalent to semistability, while cohomological stability can be a stronger condition than stability.
\end{remark}

In \cite{EL} the two authors prove the cohomological stability of the DSB $M_{\cal L}$ associated to any line bundle ${\cal L}$ on a curve of positive genus $g$, under the assumption that 
$\deg {\cal L}\geqslant 2g+1$. 

The main result of this section is Theorem \ref{uffa} stated in the Introduction, which is a generalization of the result of Ein and Lazarsfeld.

\begin{theorem}\label{uffa}
Let $({\cal L}, V)$ be a $g^r_d$ on a smooth curve $C$, inducing a birational morphism. Suppose that
\begin{itemize}
\item[-] $d\leqslant 2r+\cliff C$; 
\item[-] $\mathrm{codim}_{H^0({\cal L})} V \leqslant h^1({\cal L})$. 
\end{itemize}
Then $M_{V,{\cal L}}$ is cohomologically semistable. It is strictly stable unless $d=2r$. 
\end{theorem}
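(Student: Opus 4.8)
The plan is to prove cohomological semistability directly from the definition (Definition \ref{cohostab}): for a line bundle $\mathcal{A}$ of degree $a$ and an integer $t < \rk M_{V,\mathcal{L}} = r$, I must show that $h^0(\wedge^t M_{V,\mathcal{L}} \otimes \mathcal{A}^{-1}) = 0$ whenever $a > t\mu(M_{V,\mathcal{L}}) = -td/r$ (with the appropriate non-strict version for strict stability when $d \neq 2r$). The starting point is the exterior power of the defining sequence $0 \to M_{V,\mathcal{L}} \to V\otimes\oo_C \to \mathcal{L} \to 0$, which yields the standard Eagon--Northcott / Koszul-type resolution
\[
0 \to \wedge^t M_{V,\mathcal{L}} \to \wedge^t V \otimes \oo_C \to \wedge^{t-1} V \otimes \mathcal{L} \to \cdots
\]
Twisting by $\mathcal{A}^{-1}$ and chasing cohomology, a nonzero section of $\wedge^t M_{V,\mathcal{L}} \otimes \mathcal{A}^{-1}$ is controlled by the kernel of the multiplication-type map $\wedge^t V \otimes H^0(\mathcal{A}^{-1}) \to \wedge^{t-1}V\otimes H^0(\mathcal{L}\otimes \mathcal{A}^{-1})$.

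The key geometric input is that $(\mathcal{L},V)$ induces a \emph{birational} morphism $\phi\colon C \to \pr^r$, so the image curve $C' = \phi(C)$ has degree $d$ and spans $\pr^r$. A nonzero section of $\wedge^t M_{V,\mathcal{L}}\otimes\mathcal{A}^{-1}$ can be interpreted, following the Ein--Lazarsfeld framework in \cite{EL}, as the existence of a $(t+1)$-dimensional subspace $U \subseteq V$ together with a factorization forcing the sections of $U$ to impose few conditions; concretely it produces an effective divisor and a subseries of $|V|$ whose degree $a' = a + \text{(correction)}$ is too small relative to its projective dimension. The plan is to translate the vanishing $h^0(\wedge^t M_{V,\mathcal{L}}\otimes \mathcal{A}^{-1}) \neq 0$ into a statement about a linear subseries of $(C,\mathcal{L},V)$ that violates either linear stability or the Clifford-index bound $d \leqslant 2r + \cliff C$. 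Here I would invoke the birationality to rule out the degenerate configurations (multiple covers, non-reduced images) that could otherwise sabotage the degree count, and I would use the codimension hypothesis $\mathrm{codim}_{H^0(\mathcal{L})} V \leqslant h^1(\mathcal{L})$ to control $h^0(\mathcal{L}\otimes\mathcal{A}^{-1})$ via Riemann--Roch, ensuring that the relevant cohomology on the middle term of the Koszul complex does not obstruct the argument.

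The core estimate is a Clifford-type inequality applied to the subsystem extracted from a hypothetical section. If $\mathcal{A}$ has degree $a \leqslant td/r$ (the boundary case for semistability), the section of $\wedge^t M_{V,\mathcal{L}}\otimes\mathcal{A}^{-1}$ should yield a rank-$t$ subsheaf $\mathcal{S}\subset M_{V,\mathcal{L}}$ with $\det\mathcal{S} \hookrightarrow \mathcal{A}$, hence $\deg \mathcal{S} \leqslant a$; combined with the saturation machinery of Remark \ref{conticini} and the determinantal sequence of Proposition \ref{splitglob}, this feeds into the bound $\deg\mathcal{A}' \geqslant \gamma(h^0(\mathcal{A}')-1)$ for the associated quotient line bundle. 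Comparing this with $d \leqslant 2r + \cliff C \leqslant 2r + \gamma - 2$ should force a contradiction with $a > -td/r$ in the stable range, and pin down equality precisely when $d = 2r$ (the only case where the boundary is achieved, since then $\mu(M_{V,\mathcal{L}}) = -2$ matches the hyperelliptic-type extremal configuration). The distinction between strict stability and mere semistability emerges exactly here: when $d = 2r$ the Clifford inequality is saturated and one cannot promote $\geqslant$ to $>$, whereas for $d < 2r$ the slack in $2r - d$ gives the strict vanishing.

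\textbf{The main obstacle} will be the passage from the cohomological vanishing to a genuine subseries of $(C,\mathcal{L},V)$ with a controllable degree and dimension: unlike the line-bundle destabilization in the earlier sections, a section of the \emph{exterior power} $\wedge^t M_{V,\mathcal{L}}\otimes\mathcal{A}^{-1}$ does not immediately correspond to a subbundle, and extracting the right rank-$t$ subsheaf (or the right linear subspace of $V$) requires the Eagon--Northcott description together with a careful analysis of where the Koszul differential fails to be injective on global sections. I expect that making this identification clean — and verifying that birationality genuinely excludes the low-degree image curves that would break the Clifford bound — is where the real work lies; the numerical comparison with $2r + \cliff C$ afterward should be routine along the lines of Lemma \ref{ohi} and the proof of Theorem \ref{princthm}.
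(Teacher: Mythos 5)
Your proposal contains a step that fails, and it is exactly the step you yourself flag as the main obstacle. You propose that a nonzero section of $\wedge^t M_{V,{\cal L}}\otimes{\cal A}^{-1}$ ``should yield a rank-$t$ subsheaf ${\cal S}\subset M_{V,{\cal L}}$ with $\det{\cal S}\hookrightarrow{\cal A}$,'' which you then feed into the machinery of Remark \ref{conticini}, Proposition \ref{splitglob} and Lemma \ref{ohi}. This reduction is false in general: a nonzero map ${\cal A}\to\wedge^t M_{V,{\cal L}}$ corresponds to a rank-$t$ subsheaf only when the section is (locally) decomposable, and generic sections of an exterior power are not. Indeed, the implication only goes the other way (a subbundle gives a section of the exterior power, as in Remark \ref{rapporto}), and the failure of the converse is precisely why cohomological stability is \emph{strictly stronger} than slope stability. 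If your extraction worked, the two notions would coincide and Theorem \ref{uffa} would add nothing to the slope-stability results of the earlier sections; as it stands, your argument can at best reprove slope (semi)stability, not the cohomological statement.

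The actual proof avoids this problem by never trying to interpret a section of the exterior power geometrically: it filters $M_{V,{\cal L}}$ itself by line bundles. Birationality is used (Proposition \ref{proppopa}) to subtract general points $p_1,\dots,p_{r-1}$ one at a time, keeping $V(-D_k)$ generating with the expected dimension, which exhibits $M_{V,{\cal L}}$ as an extension of $\oplus_{i}\oo_C(-p_i)$ by the line bundle $\oo_C(-p_r-\cdots-p_d)$ (Remark \ref{rempopa}). Taking $\wedge^t$ of this extension writes $\wedge^t M_{V,{\cal L}}$ between explicit direct sums of line bundles; after twisting by ${\cal A}^{-1}$, the subobject has pieces of negative degree $-t-d+r-a\leqslant (r-t)(1-d/r)<0$, while the quotient pieces are $H^0({\cal A}^{-1}(-D))$ with $D$ a \emph{general} divisor of degree $t$, and these vanish by the counting Lemma \ref{lemstop}, which gives $h^0({\cal A}^{-1})\leqslant t$ under $d\leqslant 2r+\cliff C$ and $r\geqslant d-g$ --- the latter being exactly your codimension hypothesis $\mathrm{codim}_{H^0({\cal L})}V\leqslant h^1({\cal L})$. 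Your instinct that a Clifford-type case analysis on $h^1$ is the numerical engine, and that $d=2r$ is the boundary where only semistability survives, is correct: Lemma \ref{lemstop} is exactly that computation, but applied to the twisting line bundle on the graded pieces of the filtration rather than to a quotient extracted from a hypothetical destabilizing subsheaf. To repair your write-up, replace the Eagon--Northcott subsheaf extraction with this Ein--Lazarsfeld filtration; your remaining numerics then go through essentially as in the paper.
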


In order to prove Theorem \ref{uffa}, let us first establish this  simple generalization of a result used in the proof of Proposition 3.2 in \cite{EL}, and a lemma.

\begin{proposition}\label{proppopa}
Let $({\cal L}, V)$ be a $g^r_d$ on a smooth curve $C$, inducing a birational morphism; let $D_k=p_1+ \ldots+ p_k$ be a general effective divisor on $C$, with $k<r$.  
The DSB associated to the linear series  lies in the following exact sequence of sheaves
$$
0\longrightarrow M_{V(-D_k),{\cal L}(-D_k)}\longrightarrow M_{V, {\cal L}}\longrightarrow \oplus_{i=1}^k\oo_C(-p_i)\longrightarrow 0.
$$
\end{proposition}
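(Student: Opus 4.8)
The plan is to obtain the sequence as the output of the snake lemma applied to a morphism between the two defining evaluation sequences. Write $V(-D_k):=\ker\bigl(V\to H^0({\cal L}\otimes\oo_{D_k})\bigr)$ for the subspace of sections of $V$ vanishing along $D_k$, and recall ${\cal L}(-D_k)={\cal L}\otimes\oo_C(-D_k)$. The natural inclusions $V(-D_k)\hookrightarrow V$ and ${\cal L}(-D_k)\hookrightarrow{\cal L}$ assemble into a commutative ladder whose two rows are the evaluation sequences defining $M_{V(-D_k),{\cal L}(-D_k)}$ and $M_{V,{\cal L}}$, and whose three vertical arrows are these two inclusions together with the induced map on kernels. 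First I would record the two elementary cokernel computations: the middle arrow $V(-D_k)\otimes\oo_C\hookrightarrow V\otimes\oo_C$ has cokernel $(V/V(-D_k))\otimes\oo_C$, and the right arrow ${\cal L}(-D_k)\hookrightarrow{\cal L}$ has cokernel the length-$k$ skyscraper ${\cal L}|_{D_k}=\bigoplus_{i=1}^k{\cal L}_{p_i}$.

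The inputs that make this work come from the generality of $D_k$ together with the hypotheses $k<r$ and birationality. Since $({\cal L},V)$ induces a morphism, evaluation $V\to{\cal L}_p$ is surjective at every $p$; an inductive general-position argument (adding one general point at a time, using that a nonzero section of a line bundle has finitely many zeros) then shows that $k$ general points with $k\le r+1$ impose independent conditions on $V$, so $\dim V(-D_k)=(r+1)-k\geq 2$ and the multi-evaluation $V/V(-D_k)\xrightarrow{\ \sim\ }\bigoplus_i{\cal L}_{p_i}$ is an isomorphism. The genuinely delicate input — and the step I expect to be the main obstacle — is that for general $D_k$ the sheaf ${\cal L}(-D_k)$ is again globally generated by $V(-D_k)$, equivalently that the projected linear series is base-point-free; this is where birationality is essential, guaranteeing (via the general/uniform position principle) that a general $(k-1)$-secant plane meets the image curve in exactly the $k$ points $\phi(p_i)$ and nowhere else. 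This is precisely the generalisation of the lemma used in the proof of Proposition 3.2 of \cite{EL}, and I would either cite it or deduce it from the general position theorem for the birational image curve. Granting it, the top row of the ladder is a genuine short exact sequence.

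With base-point-freeness in hand I would run the snake lemma. As the middle and right vertical arrows are injective with zero kernels, the connecting homomorphism forces the induced map $M_{V(-D_k),{\cal L}(-D_k)}\to M_{V,{\cal L}}$ to be injective and yields the short exact sequence $0\to \mathrm{coker}\to (V/V(-D_k))\otimes\oo_C\to{\cal L}|_{D_k}\to 0$, where $\mathrm{coker}=M_{V,{\cal L}}/M_{V(-D_k),{\cal L}(-D_k)}$. It then remains to identify this kernel sheaf. Using that $D_k$ is reduced with distinct points and that the multi-evaluation above is a diagonal isomorphism, I would choose a basis of $V/V(-D_k)$ adapted to the decomposition $\bigoplus_i{\cal L}_{p_i}$, so that the sheaf map $(V/V(-D_k))\otimes\oo_C\to\bigoplus_i{\cal L}_{p_i}$ becomes the direct sum of the $k$ evaluations $\oo_C\twoheadrightarrow{\cal L}_{p_i}$; each is a surjection onto a skyscraper at a single point with kernel $\oo_C(-p_i)$, whence $\mathrm{coker}\cong\bigoplus_{i=1}^k\oo_C(-p_i)$. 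A final consistency check on ranks ($r=(r-k)+k$) and degrees ($-d=(-d+k)+(-k)$) confirms the identification and completes the proof.
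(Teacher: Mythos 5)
Your proposal is correct and follows essentially the same route as the paper: both proofs apply the snake lemma to the ladder formed by the evaluation sequences of $({\cal L}(-D_k),V(-D_k))$ and $({\cal L},V)$, using that general points impose independent conditions on $V$ and that birationality (generic injectivity) of the morphism guarantees ${\cal L}(-D_k)$ is generated by $V(-D_k)$. Your explicit diagonalization identifying the kernel of $(V/V(-D_k))\otimes\oo_C\to{\cal L}|_{D_k}$ with $\bigoplus_{i=1}^k\oo_C(-p_i)$ merely spells out what the paper's $3\times 3$ diagram asserts without comment.
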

\begin{proof}
As $D_k$ is general effective, we have that $\dim V(-D_k)= \dim V- k=r+1-k$. Let $W$ be the cokernel of the injection $V(-D_k)\subseteq V$.
Moreover, as the morphism induced by $|V|$ is generically injective, ${\cal L}(-D_k)$ is generated by $V(-D_k)$. 

Using the snake lemma, we can form the top exact row  in the diagram below, and the proof is concluded.
$$
\xymatrix{
&0\ar[d] & 0\ar[d]& 0\ar[d]& \\
0\ar[r]&M_{ V(-D_k), {\cal L}(-D_k)}\ar[r]\ar[d] &M_{V, {\cal L}}\ar[r]\ar[d] &\oplus_{i=1}^k\oo_C(-p_i)\ar[r]\ar[d]& 0\\
0\ar[r]& V(-D_k)\otimes\oo_C\ar[r] \ar[d] &V \otimes\oo_C\ar[r] \ar[d] &W\otimes \oo_C\ar[r]\ar[d]& 0\\
0\ar[r]&{\cal L}(-D_k)\ar[r]\ar[d]&{\cal L}\ar[r]\ar[d]&{\cal L}_{D_k}\ar[r]\ar[d]&\\
&0& 0&0&\\
}
$$
\end{proof}

\begin{remark}\label{rempopa}
With the notations and conditions of the above proposition, if we consider a general effective divisor  $D$ of maximal degree $r-1$, we have that 
$M_{V(-D),{\cal L}(-D)}$ is a line bundle which is dual to $\oo_C(p_r+\ldots + p_d)$, so 
$$M_{V(-D),{\cal L}(-D)}\cong \oo_C(-p_r-\ldots - p_d).$$
\end{remark}


\begin{lemma}
\label{lemstop}

Let ${\cal A}$ be a line bundle on the curve  $C$ such that $\deg {\cal A} \leqslant t d/r$, with $ t$, $d$, and $r$ integers verifying 
\(
0 < t < r < d \leqslant 2r + \mathrm{Cliff} (C)  \textrm{ and } 
r \geqslant d-g
\).
Suppose that the first inequality  $\deg {\cal A} \leqslant t d/r$ is strict in case 
$\mathrm{Cliff} (C) =0 $ and $d/r =2$.

Then $h^0({\cal A}) \leqslant t$.

\end{lemma}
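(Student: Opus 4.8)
The plan is to argue by contradiction: assume $h^0(\mathcal{A}) \geqslant t+1$ and extract a contradiction from the two numerical inputs, namely the Clifford bound $d \leqslant 2r + \cliff(C)$ and the Riemann--Roch bound $r \geqslant d-g$. Since $t \geqslant 1$, the possibility $h^0(\mathcal{A}) \leqslant 1$ is already incompatible with $h^0(\mathcal{A}) \geqslant t+1 \geqslant 2$, so I may assume $h^0(\mathcal{A}) \geqslant 2$. From there the argument splits into three regimes according to the value of $h^1(\mathcal{A})$, each one controlled by exactly one of the two hypotheses.

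First I would treat the case $h^1(\mathcal{A}) \geqslant 2$, where $\mathcal{A}$ contributes to the Clifford index, so that $\deg \mathcal{A} \geqslant 2(h^0(\mathcal{A})-1) + \cliff(C) \geqslant 2t + \cliff(C)$. On the other hand the Clifford bound gives $\deg \mathcal{A} \leqslant td/r \leqslant 2t + (t/r)\cliff(C)$. When $\cliff(C) > 0$ the factor $t/r < 1$ makes the right-hand estimate strictly smaller than $2t + \cliff(C)$, contradicting the lower bound. When $\cliff(C) = 0$ (so $C$ is hyperelliptic and $d \leqslant 2r$) the two estimates squeeze $\deg\mathcal{A} = 2t$ and $d/r = 2$; this is precisely the configuration in which the statement's hypothesis demands the strict inequality $\deg\mathcal{A} < td/r = 2t$, again a contradiction.

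The remaining cases $h^1(\mathcal{A}) \in \{0,1\}$ are governed by $r \geqslant d-g$, i.e.\ $d \leqslant r+g$. If $h^1(\mathcal{A}) = 0$, Riemann--Roch gives $h^0(\mathcal{A}) = \deg\mathcal{A} - g + 1$, and $\deg\mathcal{A} \leqslant td/r \leqslant t(r+g)/r = t + tg/r < t+g$ (using $t<r$); since $\deg\mathcal{A}$ is an integer this forces $\deg\mathcal{A}\leqslant t+g-1$ and hence $h^0(\mathcal{A})\leqslant t$, a contradiction. If $h^1(\mathcal{A}) = 1$, however, Riemann--Roch yields only $h^0(\mathcal{A}) = \deg\mathcal{A} - g + 2$, and the very same estimate gives merely $h^0(\mathcal{A}) \leqslant t+1$, which is not yet enough.

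I expect the borderline $h^0(\mathcal{A}) = t+1$ in the case $h^1(\mathcal{A})=1$ to be the main obstacle. There one gets $\deg\mathcal{A} = t+g-1$, while $h^1(\mathcal{A}) \geqslant 1$ forces $\deg\mathcal{A}\leqslant 2g-2$ and hence $t\leqslant g-1$. Substituting $t+g-1 = \deg\mathcal{A}\leqslant td/r\leqslant t(r+g)/r$ and combining with $t\leqslant g-1$ pins every inequality to its extreme value and forces $t = r-1 = g-1$ and $d = 2r$; then $\mathcal{A}$ is a line bundle of degree $2g-2$ with $h^0=g$, so $\mathcal{A}\cong\omega_C$ and $d/r = 2$. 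This near-canonical configuration is the delicate heart of the proof: it sits exactly on the $d/r = 2$ boundary, matching the exceptional case $d=2r$ of Theorem \ref{uffa}, and it is here that the strictness hypothesis at $d/r=2$ must be invoked to rule out $\mathcal{A}\cong\omega_C$ and close the argument.
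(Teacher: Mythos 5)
Your argument is correct in substance and follows the same skeleton as the paper's proof: a trichotomy on $h^1({\cal A})$. Your cases $h^1({\cal A})\geqslant 2$ and $h^1({\cal A})=0$ coincide with the paper's cases (A) and (B) (the paper phrases (B) as the inequality $t<rg/(d-r)$, you as $td/r<t+g$; via $r\geqslant d-g$ and $t<r$ these are the same estimate). The genuine difference is in the case $h^1({\cal A})=1$: the paper splits it into three subcases $d<2r$, $d>2r$, $d=2r$, using Clifford's theorem when $d<2r$ and the bound $t\leqslant r-1<r(g-1)/(d-r)$ when $d>2r$, whereas you run a single Riemann--Roch estimate giving $h^0({\cal A})\leqslant t+1$ and then analyze the borderline $h^0({\cal A})=t+1$, pinning it down (correctly -- the chain $t+g-1\leqslant td/r$, $d\leqslant r+g$, $t\leqslant r-1$, $t\leqslant g-1$ does force $t=r-1=g-1$, $r=g$, $d=2r$, $\deg{\cal A}=td/r$ and ${\cal A}\cong\omega_C$) to a unique failure configuration. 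This is arguably sharper than the paper's treatment, since it exhibits \emph{the} obstruction rather than excluding it subcase by subcase.

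One caveat, which cuts against the lemma's statement rather than against you: in your last step you invoke ``the strictness hypothesis at $d/r=2$'', but the lemma as stated assumes strictness only when $\cliff(C)=0$ \emph{and} $d/r=2$, while in your borderline configuration $\cliff(C)$ is unconstrained. Read literally, the hypothesis does not cover your final case -- but the paper's own proof has exactly the same issue: its subcase (C.3) assumes $\deg{\cal A}<td/r$ whenever $d=2r$, with no hyperellipticity proviso. In fact your borderline analysis produces an outright counterexample to the literal statement: on a non-hyperelliptic curve (so $\cliff(C)>0$) take $r=g$, $d=2g$, $t=g-1$, ${\cal A}=\omega_C$; all stated hypotheses hold with $\deg{\cal A}=2g-2=td/r$, yet $h^0(\omega_C)=g=t+1$. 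So the hypothesis should read ``strict whenever $d/r=2$'', which is how the lemma is actually applied in the proof of Theorem \ref{uffa} (whence the exceptional case $d=2r$ there). With that corrected reading, your proof is complete and matches the intended content of Lemma \ref{lemstop}.
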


\begin{proof}

Let us distinguish three cases according to the values of $h^1({\cal A})$:
\begin{itemize}
\item[(A)] Suppose that $ h^1({\cal A})\geqslant 2$. 
Then we can suppose that ${\cal A}$ contributes to the Clifford index of $C$ (i.e. that $h^0({\cal A})\geqslant 2$), and so 
$$
\begin{aligned}
2(h^0({\cal A})-1)\leqslant \deg {\cal A} -\cliff C
\leqslant t\frac{d}{r}-\cliff C\leqslant \\
\leqslant t\left(2+ \frac{\cliff C}{r}\right) -\cliff C
= 2t + \cliff C\left(\frac{t}{r}-1\right).
\end{aligned}
$$
The last quantity is strictly smaller than $2t$ if and only if $\cliff C>0$. So, if $C$ is non-hyperelliptic we are done. 
If $C$ is hyperelliptic we still have the claim if $d/r < 2$, 
while if $d/r=2$, 
the claim is true supposing the strict inequality $\deg {\cal A}< td/r$.

\item[(B)] Suppose that ${\cal A}$ is non special: $h^1({\cal A})= 0$. Then by Riemann-Roch $h^0({\cal A})= \deg {\cal A} -g+1$.
This quantity is smaller or equal to $t$ if and only if $\deg {\cal A} -g<t$. 
Hence, it is sufficient to prove that $td/r-g<t$; equivalently, we need to prove that 
$$t<r\frac{g}{d-r}.$$
By assumption, we have that $r\geqslant d-g$, so the above inequality is true, being $t<r\leqslant rg/(d-r)$.

\item[(C)] Eventually, let us suppose that $h^1({\cal A})= 1$. Remember that we are assuming that $d\leqslant 2r +\cliff C$. Let us distinguish three cases again.
\begin{itemize}
\item[(C.1)] If $d<2r$ (for instance this is the case if $d >2g$ as in \cite{EL}). 
Then $\deg {\cal A} \leqslant td/r<2t$. 
So, as ${\cal A}$ is special, we have that 
$2(h^0({\cal A})-1) \leqslant \deg {\cal A} <2t$ and we are done.
\item[(C.2)] Suppose that $d>2r$. 
Observe that $$ r\left(\frac{g-1}{d-r}\right)= r-\frac{r}{d-r}> r-1\geqslant t.$$
As in point (B), this is the inequality we need.
\item[(C.3)] Let us now suppose that $d=2r$. In this case, we obtain  $r(g-1)/(d-r)\geqslant t$, and we can prove the claim if we assume that  $\deg {\cal A}< td/r$.
\end{itemize}
\end{itemize}

\end{proof}

\begin{proof}[Proof of Theorem \ref{uffa}]
By Proposition \ref{proppopa} and Remark \ref{rempopa}, we have that the bundle $M_{V,{\cal L}}$ sits in the exact sequence
$$
0\longrightarrow  \oo_C(-p_r-\ldots - p_d) \longrightarrow M_{V, {\cal L}}\longrightarrow \oplus_{i=1}^r\oo_C(-p_i)\longrightarrow 0.
$$
Let $t$ be an integer strictly smaller than $r$.
Applying the $t$-th exterior power, we get the sequence

\begin{equation}\label{esteriore}
\begin{aligned}
0\longrightarrow  
\bigoplus_{1\leq i_1<\ldots i_{t-1}\leq r} \oo_C(-p_{i_1}-p_{i_2}-\ldots -p_{i_{t-1}}-p_r-\ldots - p_d) 
\longrightarrow \\
\longrightarrow \bigwedge^t  M_{V, {\cal L}}\longrightarrow \bigoplus_{1\leq j_1<j_2\ldots <j_t\leq r}\oo_C(-p_{j_1}-p_{j_2}-\ldots-p_{j_t})\longrightarrow 0.
\end{aligned}
\end{equation}

Let us now tensor the above sequence with  a line bundle  ${\cal A}^{-1} $ of degree $-a$. 

We shall now suppose that $-a\leqslant td/r$, in order to prove cohomological stability. We will see in the course of the proof that in case $d/r=2$ we will need to assume, strict inequality thus proving semistability (and precisely strict semistability in this case, of course).

We want to prove that $H^0(\wedge^t M_{V, {\cal L}}\otimes {\cal A}^{-1})=\{0\}$. To this aim, as in \cite{EL}, let us consider the global sections of sequence (\ref{esteriore}) 
tensorised by ${\cal A}^{-1}$ and  prove that both left and right side are trivial.

The left hand side is a sum of global sections of line bundles each of degree $-t-d+r-a\leqslant -t-d+r+(td)/r= (r-t)(1-d/r)$.
As $r-t>0$ by assumption, and $d>r$, this degree is negative and we are done.

Let us now study the right hand side:
applying Lemma \ref{lemstop} to ${\cal A}^{-1}$, we have $h^0({\cal A}^{-1})\leq t$
(remark that the hypothesis on $h^1 ({\cal L})$ is equivalent to $r \geqslant d-g$).

By construction, the pieces of the right hand side are of the form 
$H^0({\cal A}^{-1}(-D))$ where $D$ is a general effective divisor of degree $t$, so they vanish. 

\end{proof}

As a consequence, we have this result:
\begin{corollary}
If $C$ is non-hyperelliptic, then $M_{\omega_C}$ is cohomologically stable. 
Moreover, if $\cliff C \geqslant 2$ , 
a projection from a general point in $\pr^{g-1}$ is cohomologically stable. 
\end{corollary}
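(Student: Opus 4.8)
The plan is to derive both assertions from Theorem~\ref{uffa}, applied to the complete canonical series and to its generic projection, with the delicate point being the boundary behaviour $d=2r$. For the first statement I would take $({\cal L},V)=(\omega_C,H^0(\omega_C))$, a $g^{g-1}_{2g-2}$. Since $C$ is non-hyperelliptic the canonical morphism is an embedding, in particular birational, so Theorem~\ref{uffa} applies: here $d=2g-2$, $r=g-1$, hence $d=2r$, the hypothesis $d\leqslant 2r+\cliff C$ reduces to $\cliff C\geqslant 0$, and $\mathrm{codim}_{H^0(\omega_C)}V=0\leqslant h^1(\omega_C)=1$. This immediately gives cohomological semistability of $M_{\omega_C}$. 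The difficulty is that, because $d=2r$, the theorem as stated stops at semistability, and cohomological stability is genuinely stronger than slope stability (the latter is already known from Theorem~\ref{paranjape}), so one cannot simply invoke it.

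To upgrade to cohomological stability I would re-enter the proof of Theorem~\ref{uffa}. Since $\mu(M_{\omega_C})=-2$, cohomological stability requires $h^0(\wedge^t M_{\omega_C}\otimes{\cal A}^{-1})=0$ for every $t<g-1$ and every ${\cal A}$ with $\deg {\cal A}^{-1}\leqslant td/r=2t$; semistability already covers the strict inequality, so only the boundary $\deg{\cal A}^{-1}=2t$ remains. I would feed ${\cal A}^{-1}$ into Lemma~\ref{lemstop} in the case $d=2r$: because $C$ is non-hyperelliptic we have $\cliff C\geqslant 1$, which makes the Clifford case $h^1\geqslant 2$ strict, while for $h^1=1$ the bound $h^0({\cal A}^{-1})\leqslant t$ still follows from $t<r=g-1$. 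Thus the non-strict hypothesis $\deg{\cal A}^{-1}=2t$ already yields $h^0({\cal A}^{-1})\leqslant t$. Then in the exterior-power sequence~(\ref{esteriore}) tensored by ${\cal A}^{-1}$ the left term vanishes by negativity of degree and the right term vanishes because a general effective divisor of degree $t$ kills all sections of ${\cal A}^{-1}$; hence the required vanishing holds at the boundary and $M_{\omega_C}$ is cohomologically stable.

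For the second statement I would let $V\subset H^0(\omega_C)$ be a general hyperplane, i.e.\ the linear series obtained by projecting the canonical curve from a general point $x\in\pr^{g-1}$; this is a $g^{g-2}_{2g-2}$ with $r=g-2$ and $d=2g-2$. First I would establish birationality: $\cliff C\geqslant 2$ forces $g\geqslant 5$, so the secant variety of the canonical curve (of dimension at most $3$) is a proper subvariety of $\pr^{g-1}$, and a general $x$ lies on no secant line; the projection is therefore generically injective, hence birational. The hypotheses of Theorem~\ref{uffa} then read $2g-2\leqslant 2(g-2)+\cliff C$, i.e.\ $\cliff C\geqslant 2$ (the standing assumption), and $\mathrm{codim}_{H^0(\omega_C)}V=1\leqslant h^1(\omega_C)=1$. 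Crucially now $d=2g-2>2g-4=2r$, so $d\neq 2r$ and Theorem~\ref{uffa} delivers strict cohomological stability directly, with no boundary analysis.

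In summary, I expect the only genuinely delicate step to be the first part: the canonical series sits exactly on the boundary $d=2r$, where the blanket statement of Theorem~\ref{uffa} yields only semistability, and the whole point is the boundary estimate $h^0({\cal A}^{-1})\leqslant t$ at $\deg{\cal A}^{-1}=2t$. This is precisely where hyperellipticity ($\cliff C=0$) would break the argument, so the hypothesis that $C$ be non-hyperelliptic is exactly what is needed. The projection statement, by contrast, should be a clean application of the theorem once the birationality of a generic projection is secured.
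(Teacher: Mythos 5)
Your proposal is correct and is essentially the paper's own (implicit) argument: the corollary is stated as an immediate consequence of Theorem~\ref{uffa}, and the boundary case $d=2r$ for $\omega_C$ is exactly covered by Lemma~\ref{lemstop}, whose hypothesis demands strictness in $\deg {\cal A} \leqslant td/r$ only when $\cliff (C)=0$ and $d/r=2$ --- so non-hyperellipticity lets the non-strict boundary estimate $h^0({\cal A}^{-1})\leqslant t$ go through, as you re-derive (your check $h^0({\cal A}^{-1})\leqslant 2t+2-g\leqslant t$ for $h^1=1$, using $t\leqslant g-2$, is precisely case (C.3) of that lemma in the canonical range). Your verification of birationality of the general projection via the secant variety for $\cliff (C)\geqslant 2$ (hence $g\geqslant 5$), and the observation that then $d=2g-2>2r$ so stability is direct, likewise matches the intended application, merely spelling out details the paper leaves unsaid.
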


%

\begin{remark}\label{pullback}
What can we say if we drop the assumption of the linear series to induce a birational morphism? 
Let $({\cal L}, V)$ be a base point free linear series on a curve $C$. Let $\varphi\colon C\longrightarrow \pr^r$ the induced morphism, and $\nu\colon \overline C\rightarrow \varphi(C)\subseteq \pr^r$ the normalization of the image curve. Then the morphism $\varphi$ decomposes as 
$$
\xymatrix{C\ar[r]^{\beta}&\overline C\ar[r]^{\nu}& \varphi (C)\ar@{^{(}->}[r]^{\iota}&\pr^r}
$$
where $\beta$ is a finite morphism (of degree $b$). 
Let $(\overline {\cal L}, \overline V)$  be the linear series induced on  $\overline C$ by $ \iota\circ \nu$.
Clearly $V=\beta^*(\overline V)$, and ${\cal L}=\beta^*\overline {\cal L}$, and $\deg \overline{\cal L}= (\deg{\cal L})/b$.

Proposition \ref{proppopa} still holds if we substitute $D_k$ with $\beta^*(\beta(D_k))$, and   the points of this divisor fail to impose independent conditions on $H^0({\cal L})$, so that the argument of Theorem \ref{uffa}  cannot be pushed through. 

Observe that $\beta^* M_{ \overline V,\overline{\cal L}}=M_{V,{\cal L}}$. If $(\overline{\cal L}, \overline V)$ satisfies the numerical conditions of Theorem \ref{uffa}, then  $M_{ \overline V,\overline{\cal L}}$ is cohomologically stable by Theorem 
\ref{uffa}, so its pullback $M_{V,{\cal L}}$ is semistable, but we cannot say anything about its cohomological stability, nor  vector bundle stability.

\smallskip

On the other hand, it is worth noticing that  linear stability is preserved by finite morphisms: it is easy to verify that $(C, {\cal L}, V)$  is linearly (semi)stable if and only if  
$(\overline C,  \overline{\cal L}, \overline V)$  is linearly (semi)stable.
\end{remark}

\section{Linear series of dimension 2 and counterexamples}
\label{sezserie2}

In this section we discuss linear stability for curves with a $g^2_d$,
then exhibit some examples and counter-examples to the implication (linear stability of the triple $(C, {\cal L}, V)$ $\Rightarrow$ stability of $M_{V,{\cal L}}$).

The first result shows that linear stability is in this case related to the singularity of the image.

\begin{proposition}\label{linstabpiane}
Let $\nu \colon C \longrightarrow \mathbb{P}^2$ be a birational morphism.
Call $\overline C\subset \pr^2$ its image, and $d$ the degree of $\overline C$ in $\pr^2$.
The morphism $\nu$ is induced by a linearly (semi)stable
linear system if and only if all points 
$p \in \overline C$ have multiplicity $m_p < d/2$ (or $m_p \leqslant d/2$ for semistability).
\end{proposition}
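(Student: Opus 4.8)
The plan is to reduce linear (semi)stability to a single inequality at each point of $\overline C$. Since the system $|V|$ defining $\nu$ is a $g^2_d$, we have $r = 2$ and $\dim V = 3$, so the only proper sublinear series relevant to Definition \ref{ls} are the pencils, i.e. those of projective dimension $r' = 1$; the subseries of dimension $r'=0$ impose no condition, and the unique subseries of dimension $r'=2$ is $|V|$ itself, which is base point free (it defines a morphism) and hence gives $d' = d$ and equality in $d'/r' \geqslant d/r$. Thus linear (semi)stability amounts to checking that every pencil $V' \subset V$ has moving part of degree $d'$ with $d'/1 \geqslant d/2$ (resp. $>$).

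The key step is to parametrize these pencils geometrically. Identifying the target $\pr^2 = \pr(V^*)$, projective duality sends a two-dimensional subspace $V' \subset V$ to the point $p \in \pr^2$ cut out by the common vanishing of the forms in $V'$; the members of $|V'|$ are then exactly the pullbacks $\nu^*(\ell)$ of the lines $\ell$ through $p$. I would next compute the base divisor of this pencil on $C$. As $\nu$ is birational, $\nu\colon C \to \overline C$ is the normalization, every $\nu^*(\ell)$ has degree $d = \deg \overline C$, and the common part of all of them is supported on $\nu^{-1}(p)$. I claim its degree equals the multiplicity $m_p$ of $\overline C$ at $p$ (with the convention $m_p = 0$ when $p \notin \overline C$): a general line through $p$ meets $\overline C$ at $p$ with intersection number exactly $m_p$, distributed among the branches, so the base divisor is $\nu^{-1}(p)$ of total degree $m_p$. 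Hence the moving part of the pencil has degree $d' = d - m_p$ and dimension $r' = 1$.

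Plugging into the stability inequality, the pencil through $p$ fails to destabilize exactly when $(d - m_p)/1 \geqslant d/2$, that is $m_p \leqslant d/2$ (resp. $m_p < d/2$ for strict stability). Points $p \notin \overline C$ give $m_p = 0$ and so never destabilize; thus the conditions to check range precisely over $p \in \overline C$, and running over all pencils yields the stated equivalence in both the stable and semistable versions.

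The step I expect to require the most care is the identification of the degree of the base divisor with the multiplicity $m_p$: one must argue that the common part of the $\nu^*(\ell)$ over all lines $\ell \ni p$ is computed by a general such line, and that the local contribution of each branch of $\overline C$ at $p$ to this general pullback is exactly the branch multiplicity, summing to $m_p$. This rests on the standard description of the multiplicity of a plane curve singularity as the intersection number with a general line through the point, together with birationality of $\nu$, which lets one transport the computation cleanly to the normalization $C$ and equate $\deg {\cal L} = d = \deg \overline C$.
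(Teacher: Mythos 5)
Your proof is correct and follows essentially the same route as the paper's (much terser) argument: the paper simply observes that linear (semi)stability is equivalent to every projection from a point $p \in \pr^2$ having degree $> d/2$ (resp.\ $\geqslant d/2$), and that this degree is exactly $d - m_p$. Your expanded version --- reducing to pencils, identifying them via duality with projection centers, and computing the base divisor of degree $m_p$ as the intersection with a general line through $p$ --- just makes explicit the standard facts the paper leaves implicit.
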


\begin{proof}
Linear stability (resp. semistability) is equivalent  to the fact that any projection from a point $p \in \mathbb{P}^2$,
has degree $> d/2$ (resp. $\geqslant d/2$). This degree is precisely $d-m_p$.
\end{proof}

From this result we can easily derive linear stability for any general $g^2_d$ contained in a very ample linear series ${\cal L}$: 
 such a linear series induces a birational morphism whose image in $\pr^2$ is an integral plane curve with at most nodes as singularities
(cf. \cite{ACGH} Exercises B-5 B-6).
Hence this series is linearly stable (resp. semistable) 
as soon as $d>4$  (resp $\geqslant 4$).
Summing up we have proven the following 

\begin{proposition}\label{serie2}
Let $C$ be a smooth curve with an embedding in $\pr^n$ of degree $d>4$  (respectively $\geqslant 4$). The general projection on $\pr^2$ is linearly stable (resp. semistable).
\end{proposition}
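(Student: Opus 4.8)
The plan is to deduce the statement from the multiplicity criterion of Proposition~\ref{linstabpiane}, using the classical structure of a general plane projection of a space curve.

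We may assume $C$ is nondegenerate in $\pr^n$ (otherwise replace $\pr^n$ by the linear span of $C$), and that $n \geqslant 3$: if $n = 2$ the image is already a smooth plane curve of degree $d$, all of whose points have multiplicity $1 < d/2$ (as $d > 2$), so the claim is immediate from Proposition~\ref{linstabpiane}. For $n \geqslant 3$, a general projection to $\pr^2$ is the morphism $\nu \colon C \to \pr^2$ obtained by projecting from a general linear center $\Lambda \cong \pr^{n-3} \subset \pr^n$, inducing on $C$ a general $g^2_d$ contained in the embedding linear series.

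First I would invoke the general-projection theorem for curves (see \cite{ACGH}, Exercises B-5, B-6): since $\dim C + \dim \Lambda = 1 + (n-3) < n$, a general $\Lambda$ is disjoint from $C$, so $\nu$ is a morphism; moreover, for general $\Lambda$ the morphism $\nu$ is birational onto its image $\overline C$, and $\overline C$ is an integral plane curve whose only singularities are nodes. Since $\nu$ is birational and the pullback of a line is a hyperplane section of $C$, the degree of $\overline C$ in $\pr^2$ equals $d$.

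Finally I would compare multiplicities. A node has multiplicity $m_p = 2$ and every smooth point has $m_p = 1$, so the maximal multiplicity of a point of $\overline C$ is $2$. The hypothesis $d > 4$ (resp. $d \geqslant 4$) is precisely the inequality $2 < d/2$ (resp. $2 \leqslant d/2$), whence $m_p < d/2$ (resp. $m_p \leqslant d/2$) at every point $p \in \overline C$. By Proposition~\ref{linstabpiane}, $\nu$ is induced by a linearly stable (resp. semistable) linear system, as required. The only nonelementary input is the second step --- that a general projection is birational onto a nodal curve --- which is exactly the classical general-projection theorem cited from \cite{ACGH}; granting this, the remainder is the elementary numerical comparison above.
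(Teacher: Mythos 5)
Your proof is correct and takes essentially the same route as the paper: both arguments reduce the statement to the multiplicity criterion of Proposition~\ref{linstabpiane} and invoke the classical general-projection theorem (\cite{ACGH}, Exercises B-5, B-6) to ensure the image is an integral plane curve of degree $d$ with at most nodes, so that $m_p\leqslant 2$ and the hypothesis $d>4$ (resp.\ $d\geqslant 4$) yields $m_p<d/2$ (resp.\ $m_p\leqslant d/2$). Your explicit treatment of the degenerate and $n=2$ cases is a minor addition that the paper leaves implicit.
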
 

Clearly this result goes in the direction of Butler's conjecture. 
It is not hard to prove the stability of DSB for smooth plane curves, and anytime the degree $d$ is greater or equal to $4g$.
However we don't know in the general case whether or not linear stability implies the stability of the associate DSB.

\medskip

We now describe an example showing that linear stability is not always equivalent to stability of DSB.
Let us start with the following easy lemma. 
\begin{lemma}\label{piuno}
Let $|V|$ be a base point-free linear series of dimension $r$ contained in $H^0(\pr^1, \oo_{\pr^1}(d))$. Then if $r\!\!\!\not| d$, the dual span bundle $M_{ V, \oo_{\pr^1}(d)}$ is unstable.
\end{lemma}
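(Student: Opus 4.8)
The key structural fact is that on $\pr^1$ every vector bundle splits as a direct sum of line bundles (Grothendieck's theorem). So I would begin by writing $M_{V,\oo_{\pr^1}(d)} \cong \bigoplus_{i=1}^{r} \oo_{\pr^1}(a_i)$ for integers $a_1,\dots,a_r$. From the defining sequence
$$
0 \longrightarrow M_{V,\oo_{\pr^1}(d)} \longrightarrow V\otimes \oo_{\pr^1} \longrightarrow \oo_{\pr^1}(d) \longrightarrow 0,
$$
I can read off the numerical invariants: the rank is $r$ (since $\dim V = r+1$) and the degree is $-d$, so $\sum_{i=1}^r a_i = -d$ and the slope is $\mu(M_{V,\oo_{\pr^1}(d)}) = -d/r$.

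The next step is to observe that a direct sum of line bundles $\bigoplus \oo_{\pr^1}(a_i)$ is semistable precisely when all the $a_i$ are equal, and stable only if it has rank $1$. Indeed, if the $a_i$ are not all equal, then the sub-line-bundle $\oo_{\pr^1}(a_{\max})$ (with $a_{\max} = \max_i a_i$) has slope $a_{\max} > (\sum a_i)/r$, which strictly destabilizes the bundle. Therefore, for $M_{V,\oo_{\pr^1}(d)}$ to be even semistable, I would need all $a_i$ equal to a common value $a$, forcing $ra = -d$, i.e.\ $r \mid d$. This is the crux: the hypothesis $r \nmid d$ makes the equal-splitting arithmetically impossible, so the splitting type must be genuinely unbalanced, and hence the bundle is unstable.

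The only point that requires a word of care is that the integer $r$ appearing in the statement is the \emph{dimension} $r$ of the linear series $|V|$ (so $\dim V = r+1$ and $\rk M_{V,\oo_{\pr^1}(d)} = r$), and one must check that $M_{V,\oo_{\pr^1}(d)}$ really does have rank $r$ and degree $-d$ — but this is immediate from the exact sequence together with $\deg \oo_{\pr^1}(d) = d$ and base-point-freeness guaranteeing the evaluation map is surjective. I expect no serious obstacle here: the argument is essentially a divisibility observation combined with Grothendieck splitting. The main thing to get right is the direction of the destabilizing inequality, ensuring that $a_{\max} > \mu$ strictly whenever the splitting type is unbalanced, which is exactly what $r\nmid d$ guarantees.
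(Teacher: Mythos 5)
Your proposal is correct and follows essentially the same route as the paper's proof: both invoke Grothendieck's splitting theorem to write $M_{V,\oo_{\pr^1}(d)}$ as a direct sum of $r$ line bundles of total degree $-d$, and conclude that $r\nmid d$ forces an unbalanced splitting type, whence the maximal-degree summand destabilizes. Your write-up merely makes explicit the numerical bookkeeping that the paper leaves implicit.
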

\begin{proof}
The bundle  $M_{ V, \oo_{\pr^1}(d)}=\ker (V \otimes \mathcal{O}_C \to \oo_{\pr^1}(d))$ 
is a rank $r$  vector bundle on $\pr^1$, that splits in the direct sum of $r$ line bundles.
If $r$ does not divide $d$,  this bundle cannot  be (semi)stable.
\end{proof}

Combining the above lemma with Proposition \ref{linstabpiane}, we easily get counterexamples, as follows.  
\begin{proposition}\label{contro1}
On any curve $C$ there exist non-complete linear systems $V\subset H^0( {\cal L})$ such that  $(C, {\cal L}, V)$ is linearly stable and $M_{V, {\cal L}}$ is unstable. 
\end{proposition}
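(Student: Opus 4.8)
The plan is to construct, on an arbitrary curve $C$, a non-complete linear system that is linearly stable yet whose dual span bundle is unstable, by pulling back a suitable linear series from $\pr^1$ along a map $C \to \pr^1$ and then composing with a projection to $\pr^2$. The key tools are Lemma \ref{piuno}, which guarantees instability of $M_{V, {\cal L}}$ whenever the rank does not divide the degree, and Proposition \ref{linstabpiane}, which characterizes linear stability of a plane model through the multiplicities of its singular points.

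First I would choose a finite morphism $f \colon C \to \pr^1$ of some degree $n$ (for instance a general projection, or any nonconstant rational function, which exists on every curve). Pulling back $\oo_{\pr^1}(m)$ gives a line bundle ${\cal L} = f^*\oo_{\pr^1}(m)$ of degree $d = nm$ on $C$. Next I would fix a base-point-free subseries $|V|\subset H^0(\pr^1,\oo_{\pr^1}(m))$ of projective dimension $2$ inducing a birational morphism $\pr^1 \to \pr^2$ whose image is a rational plane curve; by choosing $V$ generically (and $m$ large enough) this plane model has only mild singularities, so that by Proposition \ref{linstabpiane} the triple $(\pr^1, \oo_{\pr^1}(m), V)$ is linearly stable. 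Composing with $f$ yields a $2$-dimensional subsystem $f^*V \subset H^0(C, {\cal L})$ inducing $C \to \pr^2$. By Remark \ref{pullback}, linear stability is preserved under finite morphisms, so $(C, {\cal L}, f^*V)$ is linearly stable as well. On the other hand, $M_{f^*V, {\cal L}} = f^* M_{V, \oo_{\pr^1}(m)}$, and this bundle on $\pr^1$ is unstable by Lemma \ref{piuno} as soon as $2 \nmid m$; since pullback by a finite morphism preserves the splitting type up to scaling degrees, the pullback remains unstable.

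The main obstacle is ensuring \emph{simultaneously} that the chosen plane projection is linearly stable (requiring control of the singularities of the image via Proposition \ref{linstabpiane}) and that the numerical condition $2 \nmid \deg$ forcing instability in Lemma \ref{piuno} holds on the relevant rank-$2$ bundle. I would resolve this by working with $V$ of dimension $r+1 = 3$ and degree $m$ odd, so that $r = 2$ does not divide $m$, while simultaneously taking $m$ large enough that the general plane model has only nodes, whence by Proposition \ref{serie2} it is linearly stable. It then remains to check that the instability of $M_{V, \oo_{\pr^1}(m)}$ genuinely propagates to $M_{f^*V, {\cal L}}$ under pullback; this is where I expect the bookkeeping to matter, since a destabilizing subbundle $L \hookrightarrow M_{V,\oo_{\pr^1}(m)}$ pulls back to $f^*L \hookrightarrow M_{f^*V, {\cal L}}$ with $\mu(f^*L) = n\,\mu(L)$ and $\mu(M_{f^*V,{\cal L}}) = n\,\mu(M_{V,\oo_{\pr^1}(m)})$, so the destabilizing inequality is preserved. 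This confirms that the pulled-back system provides the required counterexample on the given curve $C$.
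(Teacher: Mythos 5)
Your proposal is correct and follows essentially the same route as the paper's own proof: take any finite morphism $C\to\pr^1$, pull back a general base-point-free $3$-dimensional series of odd degree $>4$, use Lemma \ref{piuno} for instability of the dual span bundle on $\pr^1$, Propositions \ref{linstabpiane} and \ref{serie2} for linear stability of the plane model, and Remark \ref{pullback} to transport linear stability along the finite morphism. Your explicit slope bookkeeping showing that instability survives pullback (degrees of the splitting summands scale by the degree of the covering, so the destabilizing inequality is preserved) is a detail the paper leaves as ``clearly,'' but the argument is the same.
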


\begin{proof}
Consider any finite morphism $\beta\colon C\longrightarrow \pr^1$, and choose a map $\eta \colon \pr^1\longrightarrow \pr^2$ associated to a general base point free $ W\subset H^0(\pr^1, \oo_{\pr^1}(d))$ with odd degree $d>4$. By Lemma \ref{piuno}  the bundle $M_{ W, \oo_{\pr^1}(d)}$ is unstable. Let ${\cal L}= \beta^* \oo_{\pr^1}(d)$ and $V:= \beta^*(W)\subset H^0({\cal L})$ the linear series associated to the composition $\eta\circ \beta$. Clearly also  $M_{V, {\cal L}}=\beta^*M_{ W, \oo_{\pr^1}(d)}$ is unstable. 
On the other hand, $(\pr^1, \oo_{\pr^1}(d), W)$ is linearly stable, and -as  linear stability  respect finite morphism (Remark \ref{pullback})- so is $(C,{\cal L}, V)$.
\end{proof}

\begin{remark}
Note that the linear systems produced satisfy the inequality
$$\deg {\cal {\cal L}} \geqslant  \gamma d > \gamma (\dim V-1),$$
where $\gamma$ is the gonality of $C$, so there is no contradiction with our conjectures.
Furthermore, the subspace $V\subset H^0({\cal {\cal L}})$ is not general.

Therefore it seems reasonable to formulate some conjectures respectively on
the non complete and complete case.

\end{remark}

\begin{conjecture}\label{cong}
Let $(C,{\cal L}, V)$ a triple as usual.  If $\deg {\cal L}\leqslant  \gamma (\dim V-1)$, where $\gamma $ is the gonality of $C$, then linear (semi)stability is quivalent to
 (semi)stability 
of $M_{V,{\cal L}}$.
\end{conjecture}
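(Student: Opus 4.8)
The plan is to prove Conjecture \ref{cong}, whose statement asserts that under the hypothesis $\deg {\cal L}\leqslant \gamma(\dim V-1)$ linear (semi)stability of $(C,{\cal L},V)$ is equivalent to (semi)stability of $M_{V,{\cal L}}$. The implication ``$M_{V,{\cal L}}$ (semi)stable $\Rightarrow$ linearly (semi)stable'' is already contained in the general implications (\ref{implicazioni}) together with Remark \ref{ossLS}, so the entire content is the reverse implication. Accordingly, I would assume linear (semi)stability and suppose for contradiction that there is a maximal (semi)destabilizing saturated subbundle ${\cal S}\subset M_{V,{\cal L}}$, so that $\mu({\cal S})\geqslant\mu(M_{V,{\cal L}})$ (with $>$ in the semistable case). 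Setting up the standard diagram from Remark \ref{conticini} produces the subspace $W\hookrightarrow V$, the globally generated bundle $F_{\cal S}$ with $h^0(F_{\cal S}^*)=0$, the nonzero map $\alpha\colon F_{\cal S}\to{\cal L}$, and ${\cal A}=\det F_{\cal S}$. By linear (semi)stability and Remark \ref{ossLS} one argues as in Theorem \ref{princthm} that $\rk F_{\cal S}\geqslant 2$, so that Lemma \ref{ohi} becomes applicable once we know the determinant sequence is exact on global sections.

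The heart of the argument is then to verify the hypotheses of Lemma \ref{ohi}. The numerical hypothesis of that lemma, $\deg{\cal L}\leqslant\gamma(\dim V-1)$, is \emph{exactly} the hypothesis of the conjecture, so no further estimation is needed there; this is the key simplification over Theorem \ref{noncompl}, where the Clifford-index bound $\deg{\cal L}-2(\dim V-1)\leqslant\cliff(C)$ forced the case-by-case analysis of Lemmas \ref{lem1}--\ref{lem3}. What remains is to establish the \textbf{Claim} that the determinant sequence
$$
0\longrightarrow \oplus^{\rk F_{\cal S}-1}\oo_C\longrightarrow F_{\cal S}\longrightarrow {\cal A}\longrightarrow 0
$$
of Proposition \ref{detsequence} is exact on global sections, i.e.\ that $h^0({\cal A})\leqslant h^0(F_{\cal S})-\rk F_{\cal S}+1$. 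Once that is in hand, Lemma \ref{ohi} gives $\mu({\cal S})\leqslant\mu(M_{V,{\cal L}})$, contradicting strict destabilization and, in the borderline equality case, pinning down the failure of strict stability precisely to the hyperelliptic / $\gamma=2$ situation identified in the equality clause of Lemma \ref{ohi} (the three conditions $W=H^0(F_{\cal S})$, $\gamma=\deg{\cal A}/(h^0({\cal A})-1)$, $\gamma=\deg{\cal L}/(\dim V-1)$), which is where the semistable-but-not-stable cases enter.

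The main obstacle is precisely proving the global-section exactness \textbf{Claim} under the weaker gonality hypothesis rather than the Clifford-index one. The earlier proofs reduce, via $H^0(F_{\cal S})\twoheadrightarrow H^0({\cal A})$ being equivalent to $H^1(\oo^{\oplus\rk F_{\cal S}-1})\hookrightarrow H^1(F_{\cal S})$, to the cohomological inequality $\rk{\cal S}>\deg{\cal A}-g$ when $h^1({\cal A})\leqslant 1$, and to a Clifford-contribution estimate when $h^1({\cal A})\geqslant 2$; the latter estimate used $\cliff(C)$ directly and does not obviously survive when only $\gamma$ is controlled. I expect the delicate point to be exactly this $h^1({\cal A})\geqslant 2$ regime: here I would try to feed the image of $\alpha$ and the line bundle ${\cal A}$ into a Green-type / Castelnuovo argument as in Proposition \ref{splitglob}, invoking $\deg{\cal A}\geqslant\gamma(h^0({\cal A})-1)$ to control $h^0({\cal A})$ against $\rk{\cal S}$, and combining it with $\deg F_{\cal S}=\deg{\cal A}<\deg{\cal L}$ from Remark \ref{conticini} and the destabilization inequality $\deg{\cal A}/\rk{\cal S}\leqslant\deg{\cal L}/(\dim V-1)\leqslant\gamma$. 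If these bounds can be made to close the gap in every special range, the conjecture follows; it is quite possible, however, that the $h^1({\cal A})\geqslant 2$ case genuinely resists the gonality-only hypothesis, which would explain why the authors leave Conjecture \ref{cong} open and prove only the Clifford-index version in Theorem \ref{noncompl}.
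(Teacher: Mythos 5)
There is a fundamental mismatch here: the statement you are trying to prove is Conjecture \ref{cong}, which the paper explicitly leaves \emph{open} -- it proves only the stronger-hypothesis cases collected in Theorem \ref{noncompl} (equivalently Theorem \ref{main}), where the Clifford-index bound $\deg{\cal L}-2(\dim V-1)\leqslant\cliff(C)$ or the codimension/degree conditions of Lemmas \ref{lem1}--\ref{lem3} are available. Your proposal correctly reproduces the paper's machinery (the diagram of Remark \ref{conticini}, the reduction via Remark \ref{ossLS} to $\rk F_{\cal S}\geqslant 2$, Lemma \ref{ohi}, and the \textbf{Claim} of global-section exactness of the determinant sequence), and you correctly observe that the gonality hypothesis of the conjecture is exactly what Lemma \ref{ohi} needs numerically. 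But the entire content of the conjecture is the \textbf{Claim}, and your proposal does not prove it -- as you yourself concede in the final sentence. A plan that ends with ``if these bounds can be made to close the gap in every special range, the conjecture follows'' is not a proof, and in this case the gap is precisely the open problem.

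Two concrete defects in the one new idea you offer. First, invoking $\deg{\cal A}\geqslant\gamma(h^0({\cal A})-1)$ in the $h^1({\cal A})\geqslant 2$ regime is circular: that inequality is the \emph{conclusion} of Proposition \ref{splitglob}, whose hypothesis is that the determinant sequence is exact on global sections -- the very \textbf{Claim} you are trying to establish. Green's Castelnuovo-type theorem runs in the direction (exactness on global sections $\Rightarrow$ non-surjectivity of the multiplication map $H^0({\cal A})\otimes H^0(\omega_C)\to H^0({\cal A}\otimes\omega_C)$ $\Rightarrow$ rational normal curve $\Rightarrow$ degree bound), and nothing lets you reverse it. Second, you suggest that only the $h^1({\cal A})\geqslant 2$ case is delicate, but the $h^1({\cal A})\leqslant 1$ case also fails under the gonality hypothesis alone: the needed inequality $\rk{\cal S}>\deg{\cal A}-g$ follows from $\deg{\cal A}\leqslant\gamma\,\rk{\cal S}$ only when $\rk{\cal S}<g/(\gamma-1)$, which is why Lemma \ref{lem2} assumes $\deg{\cal L}\leqslant 2g-\cliff(C)+1$ and Lemma \ref{lem3} assumes the codimension bound $\mathrm{codim}_{H^0({\cal L})}V<h^1({\cal L})+g/(\dim V-2)$; neither is implied by $\deg{\cal L}\leqslant\gamma(\dim V-1)$. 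Note also that the counterexamples of Proposition \ref{contro1} satisfy $\deg{\cal L}>\gamma(\dim V-1)$, so they are consistent with the conjecture but confirm that some hypothesis of this kind is genuinely needed; no argument currently known, in the paper or in your proposal, closes the case covered only by the gonality bound.
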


\begin{conjecture}
For any curve $C$, and any line bundle ${\cal L}$ on $C$, linear 
(semi)stability 
of $(C, {\cal L})$ is equivalent to (semi)stability of $M_{\cal L}$.
\end{conjecture}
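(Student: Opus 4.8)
The plan is to prove the equivalence in the complete case $V=H^0({\cal L})$ by the same contradiction scheme used for Theorem \ref{princthm}, after first stripping off the ranges of $\deg{\cal L}$ where one side is automatic. One implication is free: bundle (semi)stability of $M_{\cal L}$ forbids destabilization by \emph{any} subbundle, in particular by the sub-dual-spans $M_{V',{\cal L}'}$ of Remark \ref{ossLS}, so it implies linear (semi)stability (this is the last arrow of (\ref{implicazioni})). Thus the entire content is the converse: assuming $(C,{\cal L})$ linearly (semi)stable, to deduce (semi)stability of $M_{\cal L}$.

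First I would peel off the settled cases. If $\deg{\cal L}\geqslant 2g+1$, then $M_{\cal L}$ is cohomologically stable by Ein--Lazarsfeld \cite{EL}, hence stable, and the equivalence holds trivially. If $\deg{\cal L}-2(h^0({\cal L})-1)\leqslant\cliff(C)$, the equivalence is exactly Theorem \ref{princthm}. What remains is the window of \emph{special} line bundles with $\cliff(C)<\deg{\cal L}-2(h^0({\cal L})-1)$ and $\deg{\cal L}\leqslant 2g$, i.e. bundles contributing strictly more than the Clifford index of the curve. On this window I would again argue by contradiction: let ${\cal S}\subset M_{\cal L}$ be a maximal stable destabilizing subbundle, and form $F_{\cal S}$, $W\hookrightarrow H^0({\cal L})$ and ${\cal A}=\det F_{\cal S}$ as in Remark \ref{conticini}; linear (semi)stability forces $\rk F_{\cal S}\geqslant 2$, and the goal is to produce a determinant sequence for $F_{\cal S}$ exact on global sections and then invoke Lemma \ref{ohi}.

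This is exactly where the method strains, and I expect it to be the main obstacle. Lemma \ref{ohi} requires the numerical bound $\deg{\cal L}\leqslant\gamma(h^0({\cal L})-1)$, which in Theorem \ref{princthm} was \emph{derived} from the Clifford hypothesis through (\ref{elgam}); outside that hypothesis it is unavailable, because linear stability bounds $\deg{\cal L}/(h^0({\cal L})-1)$ only by the minimal degree of a moving pencil inside $|{\cal L}|$, which may exceed the gonality $\gamma$. Worse, the key Claim of Theorem \ref{princthm} — that $h^0({\cal A})\leqslant h^0(F_{\cal S})-\rk F_{\cal S}+1$, so the determinant sequence is exact on global sections — was proved in the hard case $h^1({\cal A})\geqslant 2$ precisely by the chain $\deg{\cal A}-2(h^0({\cal A})-1)\geqslant\cliff(C)\geqslant\deg{\cal L}-2(h^0({\cal L})-1)$, whose last inequality \emph{reverses} exactly on the open window. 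Both pillars supporting the application of Lemma \ref{ohi} thus collapse together.

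To get past this I would first try to replace the crude Clifford-index estimate on $h^0({\cal A})$ by a geometric Castelnuovo-type bound read off the image curve $\phi_{\cal L}(C)\subset\pr(H^0({\cal L})^\ast)$, via Green's theorem as in Proposition \ref{splitglob}, recovering the Claim and hence the exactness of the determinant sequence. Even then, Lemma \ref{ohi} would still demand the hypothesis $\deg{\cal L}\leqslant\gamma(h^0({\cal L})-1)$, which need not hold on the window; and when $\deg{\cal L}>\gamma(h^0({\cal L})-1)$ with the gonality pencil \emph{not} a subseries of $|{\cal L}|$, both Lemma \ref{ohi} and Proposition \ref{splitglob} fall outside their range of validity. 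I therefore expect that settling the conjecture requires a stability criterion for $M_{\cal L}$ not routed through the gonality bound at all — most plausibly a cohomological argument in the spirit of \cite{EL} adapted to special ${\cal L}$ — and this is the step I expect to be genuinely hard; indeed, this structural gap is presumably why the statement is posed as a conjecture rather than a theorem.
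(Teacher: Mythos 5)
This statement is posed in the paper as a conjecture: the paper contains no proof of it, so there is no argument of the authors to compare yours against line by line. Your proposal, as you yourself acknowledge in its final paragraph, is not a proof but a reduction-plus-obstruction analysis, and as such it is accurate and agrees with the paper's own account of why the statement is left open. The easy implication is indeed free (Remark \ref{ossLS} and the chain (\ref{implicazioni})); the range $\deg {\cal L} \geqslant 2g+1$ is indeed settled (cohomological stability by \cite{EL} on one side, linear stability via Riemann--Roch on the other); and the Clifford window is exactly Theorem \ref{princthm}. Your diagnosis of the remaining window is also correct on both counts: Lemma \ref{ohi} needs $\deg {\cal L} \leqslant \gamma\,(h^0({\cal L})-1)$, which in the proof of Theorem \ref{princthm} is extracted from the Clifford hypothesis via (\ref{elgam}) and is unavailable beyond it, and the Claim's hard case $h^1({\cal A}) \geqslant 2$ uses $\textrm{Cliff}(C) \geqslant \deg {\cal L} - 2(h^0({\cal L})-1)$, which reverses on the window. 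This is precisely the point the authors themselves flag: Remark \ref{incompl} states that the Claim is where Butler's argument in \cite{Butler} fails to be complete, and the closing remark of Section \ref{sezserie2} says the conjectures ``arise implicitly from Butler's article (cf.\ Remark \ref{incompl})''. Your observation that the counterexamples of Proposition \ref{contro1} are non-complete, hence do not touch the complete-case statement, likewise matches the paper; note the paper's stronger Conjecture \ref{cong} keeps the gonality bound $\deg {\cal L} \leqslant \gamma(\dim V -1)$ as a hypothesis for exactly the reason you identify.

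So the verdict is: no step of yours is wrong, but no step closes the gap either, and none could be expected to --- the statement is an open problem in the paper, and your proposal should be read as a correct delimitation of where any proof must go beyond the paper's machinery (a stability criterion for $M_{\cal L}$ not routed through the gonality bound, e.g.\ a cohomological argument in the spirit of \cite{EL} valid for special ${\cal L}$), rather than as a proof. One small caveat: the conjecture as stated says ``any line bundle'', but for $M_{\cal L}$ to be defined one needs ${\cal L}$ globally generated, as your argument implicitly assumes throughout.
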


These conjectures arise implicitly from Butler's article \cite{Butler}
(cf. Remark \ref{incompl}).

%


%


%



\section{Stable DSB's with slope 3, and their theta-divisors}
\label{theta}

In this section we construct explicitly some stable bundles of integral slope on a general curve, and we prove that they admit theta-divisors.

Let us consider a curve $C$ of even genus $g = 2k$ having general gonality
$\gamma = k+1$ and Clifford Index $\cliff (C) = k-1$.
Let $D$ be a gonal divisor: $h^0(D) = 2$ and $\deg D = k+1$, and hence 
$h^1 (D) = k$ from the Riemann-Roch formula.

Let ${\cal L} = \omega_C (-D)$, then we have that $\deg {\cal L} = 2g -2 -k -1 = 3k -3$,
$h^0({\cal L}) = h^1(D) = k$,
$h^1({\cal L}) = h^0(D) = 2$, 
so $\deg {\cal L} - 2 (h^0({\cal L}) -1) = \cliff (C)$ 
and ${\cal L}$ computes the Clifford Index of $C$.

So the dual span bundle $M_{\cal L}$ is stable, and has integral slope
\[
\mu (M_{\cal L}) = - \frac{3k -3}{k-1} = -3
\]

\begin{question}

Does $M_{\cal L}$ admit a theta-divisor?

\end{question}

We recall that the vector bundle ${\cal E}$ with integral slope is said to admit a theta-divisor if
\[
\Theta_{{\cal E}} = \{ {\cal P} \in \picard^{g-1 - \mu({\cal E})} (C) ~|~ h^0 ({\cal P} \otimes {\cal E}) \neq 0  \} 
\subsetneq \picard^{g-1 -\mu({\cal E})} (C) ~.
\]
If this is the case, then 
$\Theta_{{\cal E}}$ has a natural structure of (possibly non reduced) 
divisor in $\picard^{g+2} (C)$, whose cohomology class is 
$r \cdot \vartheta$ where $r$ is the rank of ${\cal E}$ and $\vartheta$ is the class of the canonical theta-divisor in 
$\picard^{g-1-\mu({\cal E})} (C)$.

A vector bundle admitting a theta-divisor is semistable,
and if the vector bundle admits a theta divisor  
and is strictly semistable then the theta-divisor is not integral
(cf. \cite{DN}, and \cite{bove}).


\begin{proposition}
If $C$ is general, the vector bundle  $M_{\cal L}$ constructed above admits a theta-divisor.
\end{proposition}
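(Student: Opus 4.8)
The plan is to use the defining property directly: by Definition the bundle $M_{\cal L}$ admits a theta-divisor precisely when $\Theta_{M_{\cal L}}\subsetneq\picard^{g-1-\mu(M_{\cal L})}(C)$, that is, when there is at least one line bundle ${\cal P}$ of degree $g-1-\mu(M_{\cal L})$ with $h^0(M_{\cal L}\otimes{\cal P})=0$. First I would record the numerics. Since $\mu(M_{\cal L})=-3$ we have $g-1-\mu(M_{\cal L})=g+2=2k+2$, and for ${\cal P}\in\picard^{g+2}(C)$ the twist $M_{\cal L}\otimes{\cal P}$ has rank $k-1$ and slope $g-1$, whence $\chi(M_{\cal L}\otimes{\cal P})=0$ and $h^0(M_{\cal L}\otimes{\cal P})=h^1(M_{\cal L}\otimes{\cal P})$. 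So the whole problem reduces to exhibiting a single ${\cal P}\in\picard^{g+2}(C)$ for which this common value vanishes; by semicontinuity the vanishing then holds on a dense open subset of $\picard^{g+2}(C)$, and the resulting divisor has class $(k-1)\vartheta$ as in \cite{DN}, \cite{bove}. Tensoring the defining sequence of $M_{\cal L}$ by ${\cal P}$ and taking cohomology identifies $h^0(M_{\cal L}\otimes{\cal P})$ with $\dim\ker\mu_{\cal P}$, where $\mu_{\cal P}\colon H^0({\cal L})\otimes H^0({\cal P})\to H^0({\cal L}\otimes{\cal P})$ is the multiplication map; for general ${\cal P}$ both sides have dimension $3k$, so it suffices to find one ${\cal P}$ for which $\mu_{\cal P}$ has maximal rank.

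To make this concrete I would produce a filtration of $M_{\cal L}$. Since ${\cal L}=\omega_C(-D)$ computes the Clifford index it is globally generated (Corollary \ref{spiego2}) and, for general $C$, induces a birational morphism, so Proposition \ref{proppopa} and Remark \ref{rempopa} apply with $r=k-1$ and $d=3k-3$, giving for general points $p_1,\dots,p_d$ an exact sequence
\[
0\longrightarrow \oo_C(-p_r-\cdots-p_d)\longrightarrow M_{\cal L}\longrightarrow \bigoplus_{i=1}^{r-1}\oo_C(-p_i)\longrightarrow 0 .
\]
Here the sub-line-bundle has degree $-(2k-1)$ and there are $r-1=k-2$ quotient summands. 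Twisting by a general ${\cal P}\in\picard^{g+2}(C)$, the sub-line-bundle acquires degree $3$, so for general ${\cal P}$ and general points $h^0(\oo_C(-p_r-\cdots-p_d)\otimes{\cal P})=0$ and $h^1=g-4=2(k-2)$, while each quotient summand has degree $g+1$, hence $h^0=2$ and $h^1=0$. Consequently
\[
h^0(M_{\cal L}\otimes{\cal P})=\dim\ker\Big(\delta\colon H^0\big(\textstyle\bigoplus_{i=1}^{r-1}\oo_C(-p_i)\otimes{\cal P}\big)\longrightarrow H^1\big(\oo_C(-p_r-\cdots-p_d)\otimes{\cal P}\big)\Big),
\]
a connecting map between two spaces both of dimension $2(k-2)$. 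Thus the statement becomes the injectivity, equivalently the bijectivity, of $\delta$.

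The injectivity of $\delta$ is the step I expect to be the main obstacle. The map $\delta$ is cup product with the extension class of the filtration; by Serre duality its transpose is the evaluation map testing whether the $k-2$ general points $p_i$ impose independent conditions on the twisted system $H^0(\omega_C(p_r+\cdots+p_d)\otimes{\cal P}^{-1})$, which has the expected dimension $2(k-2)$. For general points this independence holds as soon as that system is nonempty of the expected dimension and sufficiently base-point-free, and this is exactly the input supplied by the Brill--Noether and Petri genericity of $C$. Equivalently, in the multiplication-map formulation, one must show $\mu_{\cal P}$ has maximal rank for some ${\cal P}$, a maximal-rank statement for multiplication maps on a general curve.

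Concretely I would establish the nonvanishing by a specialization argument: degenerate ${\cal P}$ and the auxiliary points $p_1,\dots,p_d$ to a configuration adapted to the gonal pencil $D$, where $h^0(M_{\cal L}\otimes{\cal P})=0$ (equivalently $\delta$ injective) can be checked by hand, and then conclude by upper semicontinuity that the general member of the family also has $h^0=0$. Openness of the vanishing locus then yields $\Theta_{M_{\cal L}}\subsetneq\picard^{g+2}(C)$, so that $M_{\cal L}$ admits a theta-divisor; combined with the stability of $M_{\cal L}$ already proved, the divisor is moreover of the expected class. The delicate point throughout is that we are not free to choose an arbitrary twist of an arbitrary bundle: the family $\{M_{\cal L}\otimes{\cal P}\}_{{\cal P}}$ is constrained, and one must use the genericity of $C$ to rule out that $M_{\cal L}$ is a Raynaud-type bundle whose every twist has sections.
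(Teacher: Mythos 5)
Your opening reduction is correct and coincides with the paper's: $M_{\cal L}$ admits a theta-divisor if and only if there is a single ${\cal P}\in\picard^{g+2}(C)$ with $h^0(M_{\cal L}\otimes{\cal P})=0$, equivalently with the multiplication map $H^0({\cal P})\otimes H^0({\cal L})\to H^0({\cal P}\otimes{\cal L})$ injective, and your filtration bookkeeping via Proposition \ref{proppopa} and Remark \ref{rempopa} (sub-line-bundle of degree $-(2k-1)$, $k-2$ quotient summands, both source and target of $\delta$ of dimension $2(k-2)$) is numerically sound. But the argument stops exactly where the real content begins: you never exhibit a ${\cal P}$, nor a degenerate configuration, for which the vanishing is actually verified. ``Degenerate ${\cal P}$ and the points to a configuration adapted to the gonal pencil, check $h^0=0$ by hand, and conclude by semicontinuity'' is a plan, not a proof --- semicontinuity only propagates a vanishing you have already established somewhere, and for a constrained family such as $\{M_{\cal L}\otimes{\cal P}\}_{\cal P}$ the existence of one vanishing twist is precisely the assertion of the proposition (as you yourself note with the Raynaud-type caveat). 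The paper carries out exactly the specialization you gesture at: it sets ${\cal P}=\oo_C(D+G)$ with $G$ a general effective divisor of degree $k+1$, checks $h^1(D+G)=0$, $h^0(D+G)=3$, and base-point-freeness of $D+G$, and then proves injectivity of $\nu\colon H^0(D+G)\otimes H^0(\omega_C(-D))\to H^0(\omega_C(G))$ by choosing a basis $\sigma_1,\sigma_2,\sigma_3$ of $H^0(D+G)$ with $\sigma_1,\sigma_2$ spanning $H^0(D)$: for $\sum_i\ell_i\otimes\sigma_i$ in the kernel, $\sigma_3$ is nonzero at the points of $G$ while $\ell_1\sigma_1+\ell_2\sigma_2\in H^0(\omega_C)$, forcing $\ell_3$ to vanish on $G$, hence $\ell_3=0$ since $h^0(\omega_C(-D-G))=0$; what remains lies in the kernel of the Petri map $H^0(D)\otimes H^0(\omega_C(-D))\to H^0(\omega_C)$, injective because $C$ is Petri. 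This is the one place where the generality of $C$ enters; you invoke ``Brill--Noether and Petri genericity'' at the right spot but never cash it out, and that is the gap.

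A secondary inaccuracy: your Serre-dual reading of $\delta$ is not the statement you would need. Dualizing, the transpose of $\delta$ maps $H^0(\omega_C(p_r+\cdots+p_d)\otimes{\cal P}^{-1})$, of dimension $2(k-2)$, to $\bigoplus_{i=1}^{k-2}H^1(\omega_C(p_i)\otimes{\cal P}^{-1})$, where each summand is \emph{two}-dimensional (dual to $H^0({\cal P}(-p_i))$), and is given by cup product with the extension classes. Injectivity of $\delta$ therefore requires the $k-2$ points to impose $2(k-2)$ independent conditions in this twisted sense, not the usual rank-$(k-2)$ statement that ``general points impose independent conditions'' on a linear system; so even the criterion you propose to verify in the degeneration is stronger than what you wrote. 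A further minor point: Proposition \ref{proppopa} requires $|{\cal L}|$ to induce a birational morphism, which you assert for general $C$ but do not verify for ${\cal L}=\omega_C(-D)$; the paper's route through the multiplication map avoids this hypothesis altogether.
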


\begin{proof}

Recall that the genus of the curve is $g = 2k$,
$\deg {\cal L} = 3k -3$,
and $\mu (M_{\cal L}) = -3$.
So $M_{\cal L}$ admits a theta-divisor if 
there exists a line bundle ${\cal P}$ of degree $\deg {\cal P} = g+2$ such that
$h^0({\cal P} \otimes M_{\cal L}) =0$.
Looking at the exact sequence
\[
0 \to M_{\cal L} \otimes  {\cal P}  \to H^0( {\cal L} ) \otimes  {\cal P}  \to  {\cal L}  \otimes  {\cal P}  \to 0
\]
and passing to global sections,
we have that $h^0( M_{\cal L} \otimes  {\cal P}  )=0$ if and only if the multiplication map 
$H^0( {\cal P} ) \otimes H^0( {\cal L} ) \to H^0(  {\cal P}  \otimes  {\cal L} )$
is injective.

Let us call $(\omega_C) - C_{g-3} + C$ the subset of $\picard^{g+2} (C)$ consisting of line bundles of the
form $\omega_C (-x_1 -x_2 -  ... - x_{g-3} + y)$ for some points
$x_1, \dots, x_{g-3}, y \in C$.
This is a 2-codimensional subset of $\picard^{g+2} (C)$, and 
its cohomology class is $W_{g-2} = \vartheta^2 /2$.

The elements
${\cal P} \in (\omega_C) - C_{g-3} + C$ are exactly those satisfying one of the following properties:
\begin{enumerate}
\item $h^0({\cal P}) > 3$
\item $h^0({\cal P}) = 3$ and ${\cal P}$ has base points.
\end{enumerate}
It can be shown that all elements satisfying one of these properties lie in 
$\Theta_{M_{\cal L}} $.
The remaining elements of $\picard^{g+2} (C)$ are line bundles ${\cal P} \in \picard^{g+2} (C)$
which are base point free and verifying $h^0({\cal P}) = 3$.

Let us show that there exists a line bundle ${\cal P} \in (\omega_C) - C_{g-3} + C$ such that the map 
$H^0({\cal P}) \otimes H^0({\cal L}) \to H^0( {\cal P} \otimes {\cal L})$
is injective.
%
Let us start by considering the multiplication map 
$$
\mu\colon H^0(D)\otimes H^0(\omega_C(-D))\longrightarrow H^0(\omega_C),
$$
and suppose that it is injective (then in fact it is an isomorphism): this assumption is true for a general curve, for instance it is true if we suppose that $C$ is a Petri curve.

Let $G$ be a general effective  divisor of degree $k+1$. 
Then observe that, as $G$ imposes general conditions on $H^0(\omega_C(-D))$,   
$$h^1(D+G)=h^0(\omega_C(-D-G))=0,$$
and hence 
$h^0(D+G)=3$. 

Let us prove that  $D+G$ is free from base points. Consider $p\in C$. If $p\in \mbox{supp} (G)$ then $h^0(\omega_C(-G+p))=0$, as the points of $G$ are in general position, and so by Riemann Roch $h^0(D+G-p)=2$. Let $p\not\in \mbox{supp} (G)$, and suppose by contradiction that $p$ is a base point of $D+G$. Then $p$ lies in the support of $D$, and  by  Riemann-Roch  we have that $h^0(\omega_C(-D-G+p))=1$. So  $G$ is special, contrary to the assumption. 

Hence $\oo_C(D+G)\in \picard^{g+2} (C)$ belongs to $ (\omega_C) - C_{g-3} + C$.

Let us now prove that the map
$\nu\colon H^0(D+G)\otimes H^0(\omega_C(-D)) \longrightarrow H^0(\omega_C(G))$
is injective.
Let $\sigma_1, \sigma_2, \sigma_3$ be a basis for $H^0(D+G)$ such that $\sigma_1$ and $\sigma_2$ generate $H^0(D)\subset H^0(D+G)$. 
Then of course the restriction of $\nu$ to $\langle \sigma_1,\sigma_2\rangle \otimes H^0(\omega_C(-D))$ is the map $\mu$, and so it is injective by our assumption.

Let $t=\ell_1\otimes\sigma_1+\ell_2\otimes \sigma_2+\ell_3\otimes\sigma_3$ be an element of $\ker \nu$, where  the $\ell_i$'s belong to $H^0(\omega_C(-D))$. 
Note that $G$ is the base locus of $\sigma_1$ and $\sigma_2$, and clearly $\ell_1\sigma_1+\ell_2\sigma_2\in H^0(\omega_C)\subset H^0(\omega_C(G))$.
As $-\ell_3\sigma_3=\ell_1\sigma_1+\ell_2\sigma_2$ in $H^0(\omega_C(G))$, and $\sigma_3$ does not vanish on any of the points of $G$, we have that $\ell_3$ has to vanish on $G$, but, as observed above, $H^0(\omega_C(-D-G))=\{0\}$. So $t=\ell_1\otimes\sigma_1+\ell_2\otimes \sigma_2$ but then $t$ is in $\ker \mu=\{0\}$, as wanted.

\end{proof}

\begin{remark}
Using the same notations as in the proof of the theorem above, let us make some remarks.

All line bundles ${\cal P} \in \picard^{g+2} (C) \setminus ((\omega_C) - C_{g-3} + C)$
induce a semistable dual span $M_{\cal P}$ of rank 2: in fact $M_{\cal P}$ is clearly a rank 2 bundle,
and any possible destabilization $Q \subset M_{\cal P}$ would be a line bundle of negative degree 
$-q > \mu (M_{\cal P}) = -(g +2)/2 = -( k+1 )= -\gamma (C)$,
and dualizing we would have a globally generated line bundle $Q^*$ of degree 
$q < \gamma (C)$, which is impossible.
By a similar argument it can be shown that $M_{\cal P}$ is actually stable for a general 
${\cal P} \in \picard^{g+2} (C)$.

Furthermore for all such ${\cal P}$, the bundle $M_{\cal P}$ admits a theta-divisor,
in fact all rank 2 stable bundles admit a theta-divisor
(very ample, cf. \cite{BV}).

Then we have a map
\[
\begin{array}{ccc}
\picard^{g+2} (C) \setminus ((\omega_C) - C_{g-3} + C) & \longrightarrow &
\mathcal{H}{ilb} (\picard^{3k}(C), 2 \vartheta)\\
{\cal P} & \mapsto & \Theta_{M_{\cal P}} \subset \picard^{3k} (C).
\end{array}
\]
%
%
%

\end{remark}

\begin{remark}
As it was shown in some cases that there exist DSB's $M_Q$ such that some exterior power
$\bigwedge^t M_Q$ has integral slope and does not admit a theta-divisor
(cf. \cite{popa}), it seems natural to ask the following question:

\end{remark}

\begin{question}

Do exterior powers $\bigwedge^t M_{\cal L}$ admit a theta-divisor?
If this is the case, are all theta-divisors integral?

\end{question}

In case both questions are answered affirmatively, it follows in particular that $M_{\cal L}$ is cohomologically stable in this case as well.


%

%

%

\vspace{1cm}


\noindent
 \begin{minipage}[t]{5cm}

\begin{flushleft}
\small{

Ernesto C. \textsc{Mistretta}

\texttt{ernesto@math.unipd.it}

\textsc{Università di Padova}

Dipartimento di Matematica

Via Trieste 63 

 35121 Padova - Italy}

\end{flushleft}

\end{minipage}
\hfill
\begin{minipage}[t]{5cm}

\begin{flushright}

\small{

Lidia \textsc{Stoppino}

\texttt{lidia.stoppino@uninsubria.it}

\textsc{Università dell'Insubria}

Dipartimento di Matematica

Via Valleggio 11 

 22100 Como - Italy}

\end{flushright}

\end{minipage}

\end{document}